\numberwithin{equation}{section}
\newcommand{\abs}[1]{\ensuremath{\left| #1 \right|}}
\newcommand{\ep}{\varepsilon}
\newcommand{\id}{\mathbb{I}}
\newcommand{\Sz}{\mathcal{S}^{2\times 2}_0}
\newcommand{\R}{\mathbb{R}} 
\newcommand{\Div}{{\rm div}_\vx} 
\newcommand{\vr}{\varrho}
\newcommand{\half}{\frac{1}{2}} 
\newcommand{\vc}[1]{{\bf #1}}
\newcommand{\vv}{\vc{v}}
\newcommand{\vx}{\vc{x}} 
\newcommand{\Grad}{\nabla_\vx}
\newcommand{\dx}{\,{\rm d}\vx}
\newcommand{\dt}{\,{\rm d}t}
\newcommand{\intx}[1]{\int_{\R^2}#1\dx}
\newcommand{\intt}[1]{\int_0^\infty #1\dt}
\newcommand{\DC}{C^\infty_{\rm c}} 
\newcommand{\bfphi}{\boldsymbol{\varphi}}
\newcommand{\ov}[1]{\overline{#1}} 
\newcommand{\un}[1]{\underline{#1}} 
\newcommand{\til}[1]{\widetilde{#1}} 
\newcommand{\new}{{\rm new}}
\newtheorem{theorem}{Theorem}[section]
\newtheorem{lemma}[theorem]{Lemma}
\newtheorem{definition}[theorem]{Definition}
\newtheorem{propo}[theorem]{Proposition}
\newtheorem{remark}[theorem]{Remark}
\title{Non-uniqueness of admissible weak solution to the Riemann problem for the full Euler system in 2D}
\author{Hind Al Baba$^1$\footnote{albaba@math.cas.cz} \and Christian Klingenberg$^2$\footnote{klingen@mathematik.uni-wuerzburg.de} \and Ond\v rej Kreml$^1$\footnote{kreml@math.cas.cz} \and
V\'aclav M\'acha$^1$\footnote{macha@math.cas.cz} \and Simon Markfelder$^2$\footnote{simon.markfelder@mathematik.uni-wuerzburg.de}}
\begin{document}
\maketitle

\bigskip

\centerline{$^1$Institute of Mathematics of the Czech Academy of Sciences}

\centerline{\v Zitn\' a 25, CZ-115 67 Praha 1, Czech Republic}
\medskip

\centerline{$^2$Department of Mathematics, W\"urzburg University}

\centerline{Emil-Fischer-Str. 40, 97074 W\"urzburg, Germany}

\bigskip

\textbf{Abstract:} The question of well- and ill-posedness of entropy admissible solutions to the multi-dimensional systems of conservation laws has been studied recently in the case of isentropic Euler equations. In this context special initial data were considered, namely the 1D Riemann problem which is extended trivially to a second space dimension. It was shown that there exist infinitely many bounded entropy admissible weak solutions to such a 2D Riemann problem for isentropic Euler equations, if the initial data give rise to a 1D self-similar solution containing a shock. In this work we study such a 2D Riemann problem for the full Euler system in two space dimensions and prove the existence of infinitely many bounded entropy admissible weak solutions in the case that the Riemann initial data give rise to the 1D self-similar solution consisting of two shocks and possibly a contact discontinuity.

\tableofcontents 

\section{Introduction} 
In this paper we consider the full compressible Euler system in the whole two-dimensional space, i.e. 
\begin{equation}
\begin{split}
\partial_t \vr + \Div (\vr\vv) &= 0, \\
\partial_t (\vr\vv) + \Div(\vr\vv\otimes \vv)+\Grad p  &= 0, \\
\partial_t \bigg(\half\vr|\vv|^2 + \vr\,e(\vr,p)\bigg) + \Div \bigg[\bigg(\half\vr|\vv|^2 + \vr\,e(\vr,p) + p\bigg)\vv\bigg] &= 0,
\end{split}
\label{eq:euler}
\end{equation}
where the density $\vr=\vr(t,\vx)\in\R^+$, the pressure $p=p(t,\vx)\in\R^+$ and the velocity $\vv=\vv(t,\vx)\in\R^2$ are unknown functions of the time $t\in[0,\infty)$ and the position $\vx=(x_1,x_2)\in\R^2$. 

Furthermore we consider an ideal gas, i.e. 
\[
e(\vr,p) = c_v\,\frac{p}{\vr},
\]
where $c_v>0$ is a constant called the specific heat at constant volume. We recall that from a physical point of view $c_v=\frac{f}{2}$, where $f$ is the number of degrees of freedom.

The system of equations \eqref{eq:euler} is complemented with the initial condition
\begin{equation}\label{eq:initial}
(\vr,\vv,p)(0,\vx) = (\vr^0,\vv^0,p^0)(\vx) \qquad \text{ in } \R^2.
\end{equation}

Further, we supplement the equations \eqref{eq:euler} with the entropy condition 
\begin{equation} \label{eq:entropy}
\partial_t\Big(\vr\,s(\vr,p)\Big) + \Div\Big(\vr\,s(\vr,p)\vv\Big) \geq 0,
\end{equation}
where $s(\vr,p)$ is the entropy. For the ideal gas we have 
\[
s(\vr,p)=\log\Big(\frac{p^{c_v}}{\vr^{c_v+1}}\Big).
\]
By \emph{admissible weak solution} - we will also use the term \emph{weak entropy solution} - we mean a triple of $L^\infty$-functions $(\vr,\vv,p)$ that fulfill \eqref{eq:euler}-\eqref{eq:entropy} in the sense of distributions, see Section \ref{s:main} for precise definitions. 

The system \eqref{eq:euler} can be formulated also with unknown temperature $\vartheta$ instead of pressure $p$. We note that the temperature can be reconstructed from the pressure and the density by $\vartheta = \frac{p}{\varrho}$ as long as the density $\vr$ is positive which is always the case in this paper.

We consider Riemann initial data
\begin{equation}
\begin{split}
(\vr^0,\vv^0,p^0)(\vx):=\left\{
\begin{array}[c]{ll}
(\vr_-,\vv_-,p_-) & \text{ if }x_2<0 \\
(\vr_+,\vv_+,p_+) & \text{ if }x_2>0
\end{array}
\right. ,
\end{split}
\label{eq:Riemann}
\end{equation}
where $\vr_\pm\in\R^+$, $\vv_\pm\in\R^2$ and $p_\pm\in\R^+$ are constants. We write $\vv=(v_{1},v_{2})^T$ for the components of the velocity $\vv$. Throughout this paper we additionally assume $v_{-,1}=v_{+,1}=0$, where $v_{\pm, 1}$ are the first components of the velocities $\vv_\pm$.

Following the groundbreaking results of De Lellis and Sz\'ekelyhidi \cite{DLSz1,DLSz2} concerning ill-posedness of the incompressible Euler equations, the question of well- and ill-posedness of admissible weak solutions to the isentropic compressible Euler equations was raised. In fact, De Lellis and Sz\'ekelyhidi themselves showed in \cite{DLSz2} the existence of initial data $(\vr^0,\vv^0) \in L^\infty(\R^2)$ for which there exist infinitely many admissible weak solutions (such initial data is called \textit{wild initial data}). In the context of hyperbolic conservation laws this result in particular shows that the notion of  admissible weak solution, i.e. solution satisfying the system of conservation laws and an inequality arising from the companion law, is probably not suitable for the problem in the case of more than one space dimension.

This result was further improved by Chiodaroli \cite{Ch} and Feireisl \cite{Fe} who showed that the ill-posedness is related to the irregularity of the velocity field, proving that there exist wild initial data $(\vr^0, \vv^0)$ with $\vr^0 \in C^1(\R^2)$. Further, De Lellis, Chiodaroli and Kreml \cite{ChiDelKre15} showed the existence of Lipschitz wild initial data and finally, Chiodaroli et al. \cite{CKMS} proved the existence of $C^\infty$ wild initial data. The method of the proof was motivated by the observation of Sz\'ekelyhidi \cite{sz} that for the incompressible Euler system, vortex sheet initial data are wild. Hence, the authors in \cite{ChiDelKre15} studied the Riemann problem for the isentropic Euler system and found an example of Riemann initial data allowing for existence of infinitely many admissible weak solutions while being generated by a backwards rarefaction wave (i.e. a compression wave). The authors in \cite{CKMS} used a similar idea and adjusted the theory to handle data generated by a smooth compression wave.

Since the Riemann problem is a basic building block in the theory of one-dimensional systems of conservation laws, it was only natural to study its properties in multi-dimensional case further. In fact, Chen and Chen \cite{Chen} already proved that if the solution to the Riemann problem consists only of rarefaction waves, such solution is unique in the class of multi-dimensional admissible weak solutions. The same was later proved also in \cite{FeiKre}.

On the non-uniqueness side, Chiodaroli and Kreml \cite{ChiKre14} proved that whenever the Riemann initial data give rise to the self-similar solution consisting of two shocks, there exist also infinitely many other bounded admissible weak solutions. Moreover they also showed that the self-similar solution may not be entropy rate admissible, meaning that some of the other solutions constructed may produce more total entropy. The study of the Riemann problem for the isentropic Euler system continued with results of Chiodaroli and Kreml \cite{ChiKre17} and Klingenberg and Markfelder \cite{MarKli17} who independently showed ill-posedness in the case of Riemann initial data giving rise to the self-similar solution consisting of one rarefaction wave and one shock. More precisely in \cite{ChiKre17} the authors need a certain smallness condition for the initial data, whereas the result in \cite{MarKli17} covers all cases of such Riemann initial data and also solves the case of Riemann initial data giving rise to self-similar solution consisting of a single shock. 

Finally, B\v rezina, Chiodaroli and Kreml \cite{BrChKr} showed that the above mentioned results of ill-posedness of admissible weak solutions hold also in the presence of a contact discontinuity in the self-similar solution arising from the multidimensionality of the problem. However, the question, whether a single contact discontinuity is a unique admissible weak solution to the isentropic Euler system or not, is still an open problem; it is worth mentioning that this is actually a direct analog of the vortex sheet initial data considered by Sz\'ekelyhidi in \cite{sz}.

In view of results for the isentropic system a natural question arises, whether these results can be transferred to the case of full Euler equations. We note the result of Markfelder and Klingenberg \cite{MaKl2}, who showed that for the isentropic system there exist infinitely many solutions conserving the energy. This however can be viewed as a proof of existence of infinitely many solutions to the full system with a proper choice of $c_v$; all these solutions also conserve the entropy. In addition to that we want to mention the result by Feireisl et al. \cite{FeKlKrMa}, who proved existence of wild initial data for the full Euler equations. Their solutions conserve the entropy and have piecewise constant density and pressure. %In this paper we prove non-uniqueness of admissible weak solutions for a larger class of Riemann initial data, namely for initial data that give rise to self-similar solution consisting of two shocks and possibly a contact discontinuity.

As in the isentropic case, we aim for the classification of 1D self-similar solutions to the Riemann problem for the full Euler system from the point of view of their (non)uniqueness. Table \ref{table} shows all the possible cases for the 1D self-similar solution. It was shown by Chen and Chen in \cite{Chen} that uniqueness of rarefaction waves actually holds also for the full system, see also \cite{FeiKreVas}, these works cover the cases 1, 3, 7 and 9 in Table \ref{table}. In this paper we present the first step in the direction of non-uniqueness. More precisely, we show that if the 1D self-similar solution to the Riemann problem consists of two shocks and possibly a contact discontinuity, then there exist also infinitely many bounded admissible weak solutions. This covers the cases 5 and 14 in Table \ref{table} and therefore serves as the extension of the result of Chiodaroli and Kreml \cite{ChiKre14} to the full Euler system. Since the theory for the cases involving one shock and one rarefaction wave is technically more difficult, it will be treated in following works.

The paper is structured as follows. In Section \ref{s:1D} we recall some results for the one-dimensional Riemann problem for the full Euler system and present a condition on the initial states $(\vr_\pm, \vv_\pm,p_\pm)$ such that the self-similar solution consists of two shocks and possibly a contact discontinuity. In Section \ref{s:main} we present our two main results, Theorem \ref{thm:mainWUE} and Theorem \ref{t:main}. In Section \ref{s:exis} we provide the technical tools used in the proof, in particular we present a key lemma relating the existence of infinitely many admissible weak solutions to the existence of a single so-called admissible fan subsolution. Finally, in Sections \ref{s:proofA} and \ref{s:proofB} we present proofs of our two main results.

\section{The 1D Riemann solution}\label{s:1D}

The initial data \eqref{eq:initial} of the problem we study are one-dimensional, we have $(\vr^0,\vv^0,p^0)(x_2)$ only with no dependence on the $x_1$ variable. Therefore it is reasonable to search for solutions which share this property, i.e. for solutions satisfying $(\vr,\vv,p)(t,\vx) = (\vr,\vv,p)(t,x_2)$. Moreover since we assume $v_{-,1} = v_{+,1} = 0$, we search for solutions with the property $v_1(t,\vx) \equiv 0$.

Hence we obtain the 1D Riemann problem
\begin{equation} \label{eq:1dEuler}
\begin{split}
\partial_t \vr + \partial_2 (\vr v_2) &= 0, \\
\partial_t (\vr v_2) + \partial_2 \big(\vr (v_2)^2 + p\big) &= 0, \\
\partial_t \bigg(\half\vr(v_2)^2 + \vr\, e(\vr,p)\bigg) + \partial_2 \bigg[\bigg(\half\vr(v_2)^2 + \vr\, e(\vr,p) + p\bigg)v_2\bigg] &= 0,
\end{split}
\end{equation}
for the unknown functions $(\vr,v_2,p)(t,x_2)$ with initial data
\begin{equation} \label{eq:1dInitial}
\begin{split}
(\vr,v_2,p)(0,x_2) &= (\vr^0,v_2^0,p^0)(x_2):=\left\{
\begin{array}[c]{ll}
(\vr_-,v_{-,2},p_-) & \text{ if }x_2<0 \\
(\vr_+,v_{+,2},p_+) & \text{ if }x_2>0
\end{array}
\right. .
\end{split}
\end{equation}

The problem \eqref{eq:1dEuler}-\eqref{eq:1dInitial} is a model example of a one-dimensional Riemann problem for a system of conservation laws and has been studied extensively by several authors, e.g. by Smoller \cite[Chapter 18 \S B]{Smoller67}. It is well known that this problem admits a self-similar solution (i.e. solution consisting of functions of a single variable $\frac{x_2}{t}$) belonging to the class of BV functions. This solution to \eqref{eq:1dEuler}-\eqref{eq:1dInitial} is unique in the class of BV self-similar functions and consists of three waves connected by constant states, where the first wave is either an admissible shock or a rarefaction wave, the second wave is a contact discontinuity and the third wave is again either an admissible shock or a rarefaction wave. In the rest of this paper we will call this solution the \textit{1D Riemann solution}. Table \ref{table} shows all the possibilities of the structure of the 1D Riemann solution.
\renewcommand{\arraystretch}{1.3}
\begin{table}[h]
	\centering 
	\begin{tabular}{|c|c|c|c|c|c|c|c|c|} \cline{2-4} \cline{7-9}
		\multicolumn{1}{c|}{}& \centering 1-wave & \centering 2-wave & \centering 3-wave & \multicolumn{2}{c|}{} & \centering 1-wave & \centering 2-wave & \centering 3-wave \tabularnewline \cline{2-4} \cline{7-9} \multicolumn{7}{c}{}\\[-4.7mm] \cline{1-4} \cline{6-9}
		1 & \centering - & \centering - & \centering - & & 10 & \centering - & \centering contact & \centering - \tabularnewline \cline{1-4} \cline{6-9} 
		2 & \centering - & \centering - & \centering shock & \hspace{6mm} $ $& 11 & \centering - & \centering contact & \centering shock \tabularnewline \cline{1-4} \cline{6-9}
		3 & \centering - & \centering - & \centering rarefaction & & 12 & \centering - & \centering contact & \centering rarefaction \tabularnewline \cline{1-4} \cline{6-9}
		4 & \centering shock & \centering - & \centering - & & 13 & \centering shock & \centering contact & \centering - \tabularnewline \cline{1-4} \cline{6-9} 
		5 & \centering shock & \centering - & \centering shock & & 14 & \centering shock & \centering contact & \centering shock \tabularnewline \cline{1-4} \cline{6-9}
		6 & \centering shock & \centering - & \centering rarefaction & & 15 & \centering shock & \centering contact & \centering rarefaction \tabularnewline \cline{1-4} \cline{6-9}
		7 & \centering rarefaction & \centering - & \centering - & & 16 & \centering rarefaction & \centering contact & \centering - \tabularnewline \cline{1-4} \cline{6-9}
		8 & \centering rarefaction & \centering - & \centering shock & & 17 & \centering rarefaction & \centering contact & \centering shock \tabularnewline \cline{1-4} \cline{6-9}
		9 &\centering rarefaction & \centering - & \centering rarefaction & & 18 & \centering rarefaction & \centering contact & \centering rarefaction \tabularnewline \cline{1-4} \cline{6-9}
	\end{tabular}
	\caption{All the 18 possibilities of the structure of the 1D Riemann solution} \label{table}
\end{table}
\renewcommand{\arraystretch}{1}

It is easy to observe that the 1D Riemann solution is an admissible weak solution to the original 2D problem \eqref{eq:euler}-\eqref{eq:initial}.

In this paper we will focus on the cases, where the 1D Riemann solution contains two admissible shocks and possibly a contact discontinuity. Therefore we recall the following proposition stating conditions for the initial data under which such 1D Riemann solution emerges.

\begin{propo} \label{prop:standard}
	Assume that either
	\begin{itemize}
		\item $p_-\leq p_+$ and $v_{+,2} - v_{-,2} < -(p_+ - p_-)\,\sqrt{\frac{2\,c_v}{\vr_-(p_- + (2c_v + 1) p_+)}}$ or
		\item $p_+\leq p_-$ and $v_{+,2} - v_{-,2} < -(p_- - p_+)\,\sqrt{\frac{2\,c_v}{\vr_+(p_+ + (2c_v + 1) p_-)}}$.
	\end{itemize}
	Then the 1D Riemann solution to problem \eqref{eq:1dEuler}-\eqref{eq:1dInitial} consists of a 1-shock, possibly a 2-contact discontinuity and a 3-shock. The intermediate states $(\vr_{M-},\vv_M,p_M)$ and $(\vr_{M+},\vv_M,p_M)$ are given by
	\begin{itemize}
		\item $p_M$ is the unique solution of 
		\begin{equation*}
		-\sqrt{2\,c_v}\left(\frac{p_M - p_-}{\sqrt{\vr_- (p_- + (2c_v + 1) p_M)}} + \frac{p_M - p_+}{\sqrt{\vr_+ (p_+ + (2c_v + 1) p_M)}}\right) = v_{+,2} - v_{-,2};
		\end{equation*}
		\item $v_{M,2} = v_{-,2} - \sqrt{2\,c_v}\frac{p_M - p_-}{\sqrt{\vr_- (p_- + (2c_v + 1) p_M)}} ;$ 
		\item $\vr_{M-} = \vr_- \frac{p_- + (2c_v + 1) p_M}{p_M + (2c_v + 1) p_-};$
		\item $\vr_{M+} = \vr_+ \frac{p_+ + (2c_v + 1) p_M}{p_M + (2c_v + 1) p_+};$
	\end{itemize}
	and the following properties hold.
	\begin{itemize}
		\item If $\vr_{M-}=\vr_{M+}$ then the 2-contact discontinuity does not appear.
		\item $p_M>\max\{p_-,p_+\}$, $\vr_{M-}>\vr_-$ and $\vr_{M+}>\vr_+$.
		\item The shock speeds denoted by $\sigma_-,\sigma_+$ satisfy $\sigma_\pm(\vr_\pm - \vr_{M\pm}) = \vr_\pm v_{\pm,2} - \vr_{M\pm}v_{M,2}$.
		\item The speed of the contact discontinuity is given by $v_{M,2}$.
	\end{itemize}
\end{propo}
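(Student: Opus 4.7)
The plan is to carry out the classical Riemann-solver analysis for \eqref{eq:1dEuler}--\eqref{eq:1dInitial} specialized to the ideal-gas constitutive law $e = c_v p/\vr$ and to the restricted structure of two admissible shocks separated by a (possibly trivial) contact discontinuity. The proof reduces to deriving the 1-shock and 3-shock Hugoniot curves, matching them through the intermediate pressure $p_M$ and velocity $v_{M,2}$, and then using monotonicity to extract existence and uniqueness of $p_M$.

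First I would invoke the Rankine--Hugoniot conditions (mass, momentum, energy) across a 1-shock joining $(\vr_-,v_{-,2},p_-)$ to $(\vr_{M-},v_{M,2},p_M)$. Eliminating the shock speed and the mass flux, standard algebra for the $\gamma$-law gas with $\gamma = 1 + 1/c_v$ (so $\gamma-1 = 1/c_v$ and $\gamma+1 = (2c_v+1)/c_v$) yields the Hugoniot density formula stated in the proposition, and the jump in velocity expressed through pressures alone. The Lax entropy condition for the 1-family is equivalent to the inequality $p_M > p_-$, which then forces $\vr_{M-} > \vr_-$. An entirely analogous computation for the 3-shock (with the mass flux taken with the opposite sign, since the 3-shock moves to the right of the intermediate state) produces the corresponding formula for $\vr_{M+}$ together with
\[
v_{M,2} - v_{+,2} = \sqrt{2c_v}\,\frac{p_M-p_+}{\sqrt{\vr_+(p_+ + (2c_v+1)p_M)}},
\]
again under the admissibility requirement $p_M > p_+$. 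Across the 2-wave pressure and the normal velocity $v_{M,2}$ are continuous by linear degeneracy, so the contact discontinuity is present precisely when $\vr_{M-}\neq \vr_{M+}$ and its speed is the common value $v_{M,2}$.

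Next I would subtract the two velocity formulas at $p_M$ to obtain the scalar equation displayed in the proposition,
\[
v_{+,2} - v_{-,2} = F(p_M) := -\sqrt{2c_v}\left(\frac{p_M-p_-}{\sqrt{\vr_-(p_-+(2c_v+1)p_M)}} + \frac{p_M-p_+}{\sqrt{\vr_+(p_+ +(2c_v+1)p_M)}}\right),
\]
and study $F$ on $[\,\max\{p_-,p_+\},\infty)$. A direct differentiation shows that each summand is strictly decreasing in $p_M$, hence $F$ is strictly decreasing and continuous with $F(p_M)\to -\infty$ as $p_M\to\infty$. Evaluating $F$ at $p_M = \max\{p_-,p_+\}$ gives precisely the threshold appearing in the two cases of the hypothesis: if $p_-\leq p_+$ the second summand vanishes at $p_M = p_+$ and $F(p_+)$ equals $-(p_+-p_-)\sqrt{2c_v/(\vr_-(p_- + (2c_v+1)p_+))}$, and symmetrically in the other case. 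The assumption on the initial data is then exactly $v_{+,2}-v_{-,2} < F(\max\{p_-,p_+\})$, which by monotonicity and continuity yields a unique $p_M > \max\{p_-,p_+\}$ solving $F(p_M) = v_{+,2} - v_{-,2}$.

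With $p_M$ determined, the remaining quantities $v_{M,2}$, $\vr_{M-}$, $\vr_{M+}$ are read off from the formulas already derived. The inequalities $\vr_{M\pm} > \vr_\pm$ are immediate from $p_M > p_\pm$ applied to the Hugoniot density formula, and the shock-speed identity $\sigma_\pm(\vr_\pm - \vr_{M\pm}) = \vr_\pm v_{\pm,2} - \vr_{M\pm} v_{M,2}$ is nothing more than the mass-conservation Rankine--Hugoniot jump condition. The only genuinely nontrivial step is the monotonicity and threshold analysis of $F$; everything else is a routine verification of Hugoniot relations and of the Lax condition.
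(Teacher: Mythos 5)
The paper does not supply its own proof of Proposition \ref{prop:standard}; it refers the reader to Smoller's book, where exactly the classical Riemann-solver analysis you describe (Hugoniot curves for the 1- and 3-families expressed in terms of pressure, matching through the contact, and monotonicity in $p_M$) is carried out. Your proposal is correct and follows essentially the same route as the cited reference, including the correct identification of $\gamma-1=1/c_v$, the algebra giving $\vr_{M\pm}$, the velocity-jump formula $[v]^2 = -[p][1/\vr]$, the strict monotonicity of $F$, and the observation that the hypothesis is precisely the statement $v_{+,2}-v_{-,2}<F(\max\{p_-,p_+\})$.
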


For the proof of Proposition \ref{prop:standard} we refer to \cite{Smoller67}. More details about self-similar solutions to 1D systems of conservation laws can be also found in \cite{Dafermos16}.

\section{Main results}\label{s:main}

In this section we state the definition of solutions to the problem \eqref{eq:euler}-\eqref{eq:initial} and our main results.

\begin{definition} 
	Let $(\vr^0,\vv^0,p^0) \in L^\infty(\mathbb{R}^2)$. The triplet $(\rho,\vv,p) \in L^\infty((0,\infty) \times \mathbb{R}^2;\R^+\times\R^2\times\R^+)$ is called a weak solution of \eqref{eq:euler}-\eqref{eq:initial} if the following system of identities is satisfied
	\begin{equation}\label{eq:weak1}
	\int_0^{\infty} \int_{\R^2} \Big(\vr\partial_t \psi + \vr \vv \cdot \Grad\psi\Big)\dx\dt + \int_{\R^2} \vr^0(\vx)\psi(0,\vx) \dx = 0
	\end{equation}
	for all test functions $\psi \in C^\infty_c([0,\infty) \times \mathbb{R}^2)$,
	\begin{equation}\label{eq:weak2}
	\int_0^{\infty} \int_{\R^2} \Big(\vr\vv\cdot\partial_t \bfphi + (\vr \vv\otimes\vv): \Grad\bfphi + p\, \Div \bfphi\Big)\dx\dt + \int_{\R^2} \vr^0(\vx)\vv^0(\vx)\cdot\bfphi(0,\vx) \dx = 0
	\end{equation}
	for all test functions $\bfphi \in C^\infty_c([0,\infty) \times \mathbb{R}^2;\R^2)$,
	\begin{equation} \label{eq:weak3}
	\begin{split}
	\int_0^{\infty} \int_{\R^2} \left[\left(\frac 12 \vr\abs{\vv}^2 + \vr\, e(\vr,p)\right)\cdot\partial_t \phi + \left(\frac 12 \vr\abs{\vv}^2 + \vr \,e(\vr,p) + p\right) \vv \cdot \Grad\phi \right]\dx\dt & \quad\\ 
	+ \int_{\R^2} \left(\frac 12 \vr^0(\vx)\abs{\vv^0(\vx)}^2 + \vr^0(\vx) e(\vr^0(\vx),p^0(\vx))\right)\phi(0,\vx) \dx &= 0 
	\end{split}
	\end{equation}
	for all test functions $\phi \in C^\infty_c([0,\infty) \times \mathbb{R}^2)$.
\end{definition}

\begin{definition} 
	Let $(\vr^0,\vv^0,p^0) \in L^\infty(\mathbb{R}^2)$. A weak solution of \eqref{eq:euler}-\eqref{eq:initial} is called admissible (or weak entropy solution), if the following inequality is satisfied for all non-negative test functions $\varphi \in C^\infty_c([0,\infty) \times \mathbb{R}^2;\mathbb{R}_0^+)$:
	\begin{equation}\label{eq:weak4}
	\int_0^{\infty} \int_{\R^2} \Big(\vr \,s(\vr,p) \partial_t \varphi + \vr \,s(\vr,p) \vv \cdot \Grad\varphi\Big)\dx\dt + \int_{\R^2} \vr^0(\vx)s(\vr^0(\vx),p^0(\vx))\varphi(0,\vx) \dx \leq 0.
	\end{equation}
\end{definition}

The solutions which we are going to construct will have a special structure, partially motivated by the structure of the 1D Riemann solution. Therefore we introduce first the notion of $N$-fan partition ($N = 1,2$) and then $N$-fan solutions.

\begin{definition} 
	Let $N\in\{1,2\}$ and $\mu_0<\mu_1$ ($\mu_0<\mu_1<\mu_2$) real numbers. An $N$-fan partition of $(0,\infty)\times\R^2$ is a set of $N+2$ open sets $\Omega_-,\Omega_1,\Omega_+$ ($\Omega_-,\Omega_1,\Omega_2,\Omega_+$) of the form
	\begin{align*}
	\Omega_-&=\{(t,\vx):t>0\text{ and }x_2<\mu_0\,t\}; \\
	\Omega_i\ &=\{(t,\vx):t>0\text{ and }\mu_{i-1}\,t<x_2<\mu_i\,t\}; \\
	\Omega_+&=\{(t,\vx):t>0\text{ and }x_2>\mu_N\,t\}.
	\end{align*}
\end{definition}

\begin{definition}
	An admissible weak solution $(\vr,\vv,p)$ of the full Euler system \eqref{eq:euler}-\eqref{eq:initial} with Riemann initial data \eqref{eq:Riemann} is called an $N$-fan solution, if there exists $N \in \{1,2\}$ and an $N$-fan partition of $(0,\infty)\times\R^2$ such that
	\begin{itemize}
		\item $(\vr,p)$ are constant on each set $\Omega_-$,$\Omega_i$, $\Omega_+$ of the $N$-fan partition;
		\item $\abs{\vv}^2$ is constant on each set $\Omega_-$,$\Omega_i$, $\Omega_+$ of the $N$-fan partition;
		\item $(\vr,\vv,p) = (\vr_\pm,\vv_\pm,p_\pm)$ on the sets $\Omega_\pm$.
	\end{itemize}
	See also Fig. \ref{fig}.
\end{definition}

\begin{figure}[hbt]
	\subfloat[1-fan solution\label{figleft}]{
		\centering
		\includegraphics[width=0.47\textwidth]{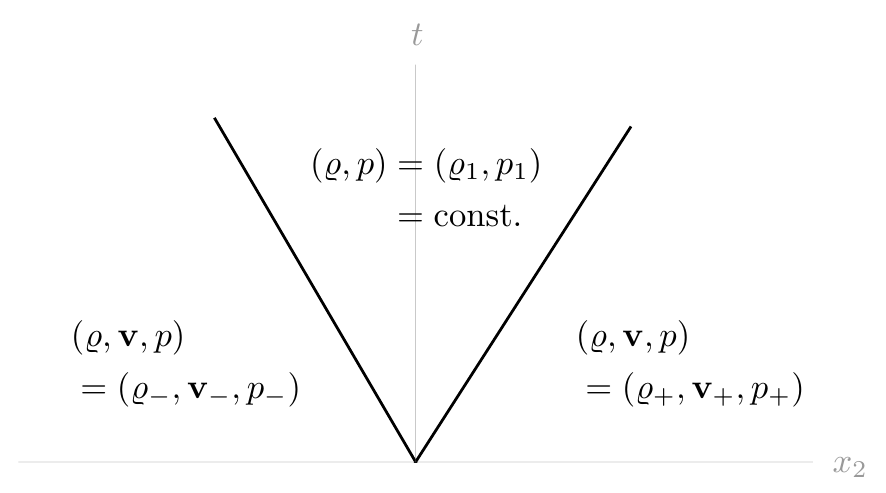}
	}
	\hfill
	\subfloat[2-fan solution\label{figright}]{
		\centering
		\includegraphics[width=0.47\textwidth]{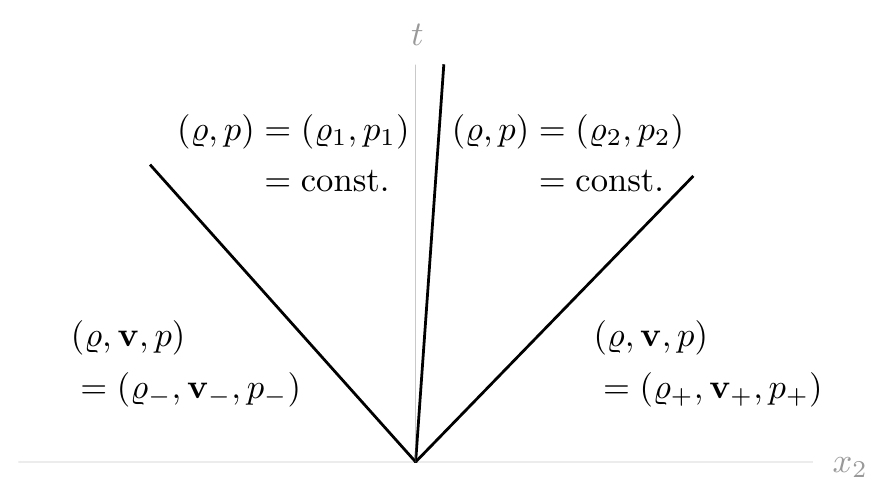}
	}
	\caption{Structure of an $N$-fan solution. On the leftmost and rightmost sets (i. e. on $\Omega_\pm$), the solution coincides with the initial states. On each set in the middle ($\Omega_i$, $i\in\{1,N\}$), the density $\vr$ and pressure $p$ are constant, whereas the velocity $\vv$ does not need to be constant.}
	\label{fig}
\end{figure}

Now we are ready to state our main theorems.

\begin{theorem} \label{thm:mainWUE}
	Let $c_v > \frac 12$ and let the Riemann initial data \eqref{eq:Riemann} be such that the 1D Riemann solution consists either 
	\begin{itemize}
		\item of a 1-shock and a 3-shock or 
		\item of a 1-shock, a 2-contact discontinuity and a 3-shock.
	\end{itemize}
	Then there exist infinitely many admissible weak solutions to \eqref{eq:euler}-\eqref{eq:Riemann}. These solutions are all $2$-fan solutions.
\end{theorem}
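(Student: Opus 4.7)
The plan is to invoke the key lemma announced for Section \ref{s:exis}, which reduces the existence of infinitely many admissible weak solutions to the existence of a single admissible $2$-fan subsolution, and then to produce such a subsolution by perturbing the $1$D Riemann solution given by Proposition \ref{prop:standard}. Concretely, I would search for a $2$-fan partition with speeds $\mu_0<\mu_1<\mu_2$ close to $(\sigma_-,\,v_{M,2},\,\sigma_+)$, constant densities and pressures $(\vr_1,p_1)$, $(\vr_2,p_2)$ on $\Omega_1,\Omega_2$ close to the $1$D intermediate states $(\vr_{M-},p_M)$ and $(\vr_{M+},p_M)$, together with scalars $c_i>0$ (playing the role of $|\vv|^2$ on $\Omega_i$), symmetric traceless matrices $\mathbb{U}_i\in\Sz$ and vectors $\vv_i\in\R^2$ for $i=1,2$ that represent the ``average'' velocity and the traceless part of the effective Reynolds stress on the two middle regions.

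The conditions a $2$-fan subsolution must satisfy split into two groups. First, on each of the three interfaces $\{x_2=\mu_k t\}$ ($k=0,1,2$) the Rankine--Hugoniot-type identities for mass, momentum (with effective momentum flux built out of $(\vr_i,\vv_i,\mathbb{U}_i,c_i,p_i)$) and energy must hold; these give a finite system of polynomial equalities. Second, the strict matrix inequality $\vv_i\otimes\vv_i - \mathbb{U}_i < \tfrac{c_i}{2}\,\mathbb{I}$ and the entropy inequality across the three interfaces must be satisfied \emph{strictly}. At the degenerate choice $\vv_i=(0,v_{M,2})$, $\mathbb{U}_i=\vv_i\otimes\vv_i - \tfrac{|\vv_i|^2}{2}\mathbb{I}$, $c_i=|\vv_i|^2$ and $(\vr_i,p_i,\mu_k)$ equal to the corresponding $1$D reference values, the equalities reduce exactly to the classical Rankine--Hugoniot conditions of Proposition \ref{prop:standard} and both shocks are strictly admissible in the sense of \eqref{eq:entropy}. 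By activating a small component $v_{i,1}\ne 0$ and slightly enlarging $c_i$ past $|\vv_i|^2$ one opens up the matrix inequality strictly, while by continuity the entropy inequalities remain strict.

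The main obstacle I expect is purely algebraic: one must verify that the Rankine--Hugoniot system, viewed as equations for the unknowns $(c_1,c_2,\vv_1,\vv_2,\mathbb{U}_1,\mathbb{U}_2)$ with $(\vr_i,p_i,\mu_k)$ taken as parameters near the $1$D reference, admits a whole open family of solutions compatible with the open sign conditions above. The complication relative to \cite{ChiKre14} is twofold: the full Euler system adds an extra energy equation per interface, and the middle interface $\{x_2=\mu_1 t\}$ --- which replaces the contact discontinuity --- carries nontrivial constraints even at the reference state, where density jumps while pressure and normal velocity do not. The hypothesis $c_v>\half$ (i.e. $\gamma=1+1/c_v<3$) should enter precisely at the stage of verifying that the entropy contributions of the two opened-up shocks have the right sign so that the admissible set of parameters is nonempty. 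Once a single admissible $2$-fan subsolution has been produced, the key lemma of Section \ref{s:exis} generates infinitely many admissible weak solutions of \eqref{eq:euler}--\eqref{eq:Riemann} with density and pressure identical to those of the subsolution and with $|\vv|^2$ constant on $\Omega_1\cup\Omega_2$, hence all $2$-fan solutions in the sense of the definition above.
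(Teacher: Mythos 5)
Your high-level strategy is the right one and matches the paper: reduce the problem, via the key lemma of Section~\ref{s:exis} (here Theorem~\ref{thm:condition}), to exhibiting a single admissible $2$-fan subsolution, and then obtain that subsolution as a small perturbation of the $1$D Riemann solution of Proposition~\ref{prop:standard}, whose ``degenerate subsolution'' reading indeed satisfies all the Rankine--Hugoniot equalities and makes the matrix inequality an equality. You also correctly note the Galilean reduction and the fact that the hard part is algebraic. However, there are two concrete gaps.

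First, the perturbation mechanism you propose---activating a first velocity component $v_{i,1}\neq 0$ and then enlarging $c_i$ past $|\vv_i|^2$---is not the one that works, and is in fact pushed in the wrong direction by the constraints. With $v_{\pm,1}=0$, the first-momentum Rankine--Hugoniot equations force $\delta_i=\mu_k\alpha_i$ on the outer interfaces, so turning on $\alpha_i$ automatically turns on $\delta_i$, and the subsolution determinant condition \eqref{eq:sc2} then contains the \emph{negative} contribution $-(\delta_i-\alpha_i\beta_i)^2$ as well as $-\alpha_i^2$ in the first factor; at the degenerate point both diagonal factors vanish, so these quadratic perturbations make the determinant \emph{negative} to leading order. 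The paper instead keeps the ansatz $\alpha_i=0$, $\delta_i=0$ throughout (see \eqref{eq:choice}) and perturbs the thermodynamic variables: $\vr_1=\vr_{M-}+\ep$, $\vr_2=\vr_{M+}-\ep$ and $p_1=p_2=p_M-\delta$. The speeds $\mu_0(\ep),\mu_1(\ep),\mu_2(\ep)$ are then determined from the mass and normal-momentum jump conditions, and $C_i,\gamma_i$ from the energy and the remaining momentum conditions, so that all equalities hold identically in $(\ep,\delta)$ and only the inequalities remain to be checked. This is a genuinely different parameterisation of the perturbation, and the one you propose is not shown (and does not appear) to satisfy the equalities while simultaneously opening the matrix inequality.

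Second, your prediction of where the hypothesis $c_v>\tfrac12$ enters is incorrect. It has nothing to do with the sign of the entropy production of the two opened-up shocks---those admissibility inequalities are strict at $(\ep,\delta)=(0,0)$ and survive by continuity for small perturbations, for any $c_v>0$. The condition $c_v>\tfrac12$ is used to open the positive-definiteness (subsolution) inequality: one computes
\begin{equation*}
\frac{C_i(\ep,\delta)}{2}+\gamma_i(\ep,\delta)=\frac{(2c_v-1)\,\delta}{\vr_i}+O(\delta\cdot o(1))+R_i(\ep),\qquad R_i(\ep)\to 0,
\end{equation*}
so the pressure perturbation $\delta>0$ produces a strictly positive leading term precisely because $2c_v-1>0$, and this is what makes the trace and determinant of $\tfrac{C_i}{2}\id-\vv_i\otimes\vv_i+U_i$ strictly positive for $\ep$ small relative to $\delta$. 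Without this observation your argument cannot explain the hypothesis on $c_v$, and the ``open family'' you invoke is not shown to be nonempty.
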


\begin{theorem}\label{t:main}
	Let $c_v > 0$. Let $\vr_{\pm}, p_{\pm} > 0$ and $V \in \mathbb{R}$ be given.
	\begin{itemize}
		\item[(i)] If $\vr_- < \vr_+$ or $\vr_- = \vr_+$ and $p_- > p_+$ let $v_{-,2} = V$. Then there exists $U = U(\vr_\pm,p_\pm,V)$ such that for all $v_{+,2} < U$ there exist infinitely many admissible weak solutions to \eqref{eq:euler}-\eqref{eq:Riemann}. These solutions are all $1$-fan solutions.
		\item[(ii)] If $\vr_- > \vr_+$ or $\vr_- = \vr_+$ and $p_- < p_+$ let $v_{+,2} = V$. Then there exists $U = U(\vr_\pm,p_\pm,V)$ such that for all $v_{-,2} > U$ there exist infinitely many admissible weak solutions to \eqref{eq:euler}-\eqref{eq:Riemann}. These solutions are all $1$-fan solutions.
	\end{itemize}
\end{theorem}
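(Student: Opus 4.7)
The proof rests on the key lemma of Section~\ref{s:exis}, reducing the construction of infinitely many admissible weak solutions to the existence of a single admissible \emph{$1$-fan subsolution}: two wave speeds $\mu_0<\mu_1$ together with constants $\vr_1,p_1>0$, a mean velocity $\vv_1\in\R^2$, a traceless symmetric matrix $\mathbb{U}_1\in\Sz$ and a scalar $C_1>|\vv_1|^2$ prescribed in the middle set $\Omega_1$, subject to (a)~the Rankine--Hugoniot jump relations at the two interfaces (with $\vv\otimes\vv$ and $|\vv|^2$ relaxed using $\mathbb{U}_1$ and $C_1$); (b)~a strict $2\times 2$ matrix inequality in $\Omega_1$ providing the slack needed for the convex integration step; (c)~strict entropy inequalities at both interfaces.

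It is enough to prove~(i), since the reflection $x_2\mapsto -x_2$ combined with swapping $\pm$-subscripts and flipping the sign of second velocity components exchanges the two cases. I impose the Ansatz $\vv_1=(0,\beta_1)^T$, consistent with $v_{\pm,1}=0$. The algebraic system~(a) then defines a two-parameter family of candidate subsolutions, which I would parametrize by the intermediate pressure $p_1$ and the energy excess $\delta:=C_1-\beta_1^2\ge 0$; the remaining unknowns $\mu_0,\mu_1,\vr_1,\beta_1,\mathbb{U}_1$ come out as explicit algebraic functions of $(v_{+,2},p_1,\delta)$ and the fixed data.

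The crux is then the verification of (b) and (c). First consider the degenerate boundary $\delta=0$: the subsolution collapses to a piecewise-constant two-shock structure with a common intermediate density, governed by the scalar RH equations for $(\mu_0,\mu_1,\vr_1,\beta_1)$ together with the trace-type entries of $\mathbb{U}_1$. In case~(i), the asymmetric hypothesis ($\vr_-<\vr_+$, or $\vr_-=\vr_+$ with $p_->p_+$) combined with $v_{+,2}$ sufficiently negative guarantees solvability of this system with $p_1>\max\{p_-,p_+\}$ and strict entropy admissibility at both interfaces. Turning on $\delta>0$ opens (b) by a margin linear in $\delta$ while (c) survives by continuity. The threshold $U(\vr_\pm,p_\pm,V)$ is then defined as the supremum of $v_{+,2}$ for which this whole construction can be carried out for some $(p_1,\delta)$ with $\delta>0$.

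The main obstacle is the simultaneous verification of entropy admissibility at both interfaces once $\delta>0$: a positive $\delta$ injects kinetic energy into $\Omega_1$ and thereby shifts both interfaces toward the boundary of the admissible-shock region. The asymmetric hypothesis in (i) selects the $1$-shock as the dominant dissipative wave and, as $v_{+,2}\to -\infty$, forces its entropy production to grow without bound, providing the room needed to accommodate the extra energy supplied by $\delta$ while keeping the $3$-shock strictly admissible. Making this precise, probably via a careful expansion of the Hugoniot functional for the full Euler system around $\delta=0$ and the limit $v_{+,2}\to -\infty$, is the most technical step. Case (ii) then follows verbatim from (i) via the reflection symmetry above.
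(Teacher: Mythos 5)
Your high-level strategy—construct a single admissible $1$-fan subsolution via the algebraic Rankine--Hugoniot system and take $v_{+,2}\to-\infty$—is the same as the paper's, and the reflection $x_2\mapsto -x_2$ reducing (ii) to (i) is a legitimate shortcut the paper chooses not to use. However, several steps in your plan contain genuine gaps.

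\textbf{Parameter count.} After the ansatz $\vv_1=(0,\beta_1)^T$, $\delta_1=0$, the RH system consists of $6$ scalar equations (mass, normal momentum, energy on each of the two interfaces) for the $7$ effective unknowns $\mu_0,\mu_1,\vr_1,p_1,\beta_1,\ep_1,\ep_2$, where $\ep_1=\tfrac{C_1}{2}-\gamma_1-\beta_1^2$, $\ep_2=\tfrac{C_1}{2}+\gamma_1$. This is a \emph{one}-parameter family, not a two-parameter one. You cannot freely prescribe both $p_1$ and $\delta=\ep_1+\ep_2$; the paper instead uses $\vr_1$ as the single free parameter and computes the rest.

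\textbf{The trace is not enough.} Your $\delta=C_1-\beta_1^2=\ep_1+\ep_2$ is only the \emph{trace} of the matrix $\tfrac{C_1}{2}\id-\vv_1\otimes\vv_1+\ov{U}_1$, whose positive \emph{definiteness} is what Proposition \ref{prop:convint} requires. Opening $\delta>0$ by a margin linear in $\delta$ controls the trace but says nothing about the determinant $\ep_1\ep_2>0$. In fact at $\delta=0$ one of $\ep_1,\ep_2$ is necessarily nonpositive, so the determinant condition is violated, not merely degenerate. Proving $\ep_1>0$ and $\ep_2>0$ \emph{separately} is precisely the delicate part: the paper needs the explicit asymptotics of $Y$ and $Z$ as $u\to\infty$, and Lemma \ref{l:64} / Lemma \ref{l:642} show that $\ep_2>0$ only holds under an explicit lower bound on $\vr_1$ involving $c_v$ and $\vr_\pm$, a constraint your continuity-from-$\delta=0$ argument does not see.

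\textbf{The $\delta=0$ limit is not a two-shock solution.} The 1D Riemann solution generically has \emph{two} distinct intermediate densities $\vr_{M\pm}$; collapsing to a single $\vr_1$ while enforcing $\ep_1=\ep_2=0$ leaves $6$ RH equations for only $5$ unknowns, an overdetermined system. The object you describe at $\delta=0$ is not available to perturb around.

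\textbf{Entropy heuristic.} The asymmetric-dissipation picture (the $1$-shock providing unbounded entropy production to "accommodate" $\delta>0$ while the $3$-shock stays barely admissible) does not match what happens. With the correct branch of $\mu_0,\mu_1$ (cf.\ Remark \ref{r:mu}), \emph{both} admissibility inequalities reduce to the \emph{same type} of lower bound $p_1^{c_v}\geq \vr_1^{c_v+1}\max\{p_-^{c_v}/\vr_-^{c_v+1},\,p_+^{c_v}/\vr_+^{c_v+1}\}$, and since $p_1\sim u^2$ both are satisfied trivially for $u$ large. There is no tension between the two interfaces to balance; the wrong branch of $\mu_0,\mu_1$ would give one upper and one lower bound on $p_1$, which is why it must be rejected—but your proposal does not isolate this branch choice.

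In short, the convex-integration reduction is the right starting point, but the actual difficulties—the one-parameter structure, the simultaneous positivity of $\ep_1$ and $\ep_2$, the correct branch selection, and the separate treatment of $R<0$, $R=0$, $R>0$—are not addressed and would not emerge from a continuity argument at the spurious $\delta=0$ boundary.
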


\begin{remark}
	Theorem \ref{thm:mainWUE} is the most general result presented here, covering all the cases of Riemann initial data giving rise to the self-similar solution containing two shocks. Solutions constructed in the proof of this Theorem are all $2$-fan solutions. It might be of interest that at least if the difference between the velocities $v_{-,2}$ and $v_{+,2}$ is large enough, one can construct also another type of solutions, namely $1$-fan solutions, this is stated in Theorem \ref{t:main}. We emphasize that the set of initial data in Theorem \ref{t:main} is a strict subset of the set of initial data in Theorem \ref{thm:mainWUE}. Finally, we refer the reader to Remark \ref{r:eqpress} for the reason why Theorem \ref{t:main} does not cover the case $\vr_- = \vr_+$ and $p_- = p_+$.
\end{remark}

\begin{remark}
	Note that the general Theorem \ref{thm:mainWUE} requires the specific heat at constant volume coefficient $c_v$ to be strictly larger than $\frac 12$, whereas the results of Theorem \ref{t:main} hold for all positive $c_v$.
\end{remark}

\section{Existence of infinitely many admissible weak solutions}\label{s:exis}

The content of this section shows the method to prove non-uniqueness of admissible weak solutions. As in the isentropic case (see \cite{ChiDelKre15} and following works) the idea is to work with a so called \emph{admissible fan subsolution} - here a quintuple of piecewise constant functions satisfying certain system of partial differential equations. Then, Theorem \ref{thm:condition} tells us that the existence of such an admissible fan subsolution implies existence of infinitely many weak entropy solutions to \eqref{eq:euler}-\eqref{eq:Riemann}.

In what follows we denote by $\Sz$ the space of symmetric traceless $2\times 2$ matrices and $\id$ denotes the $2\times 2$ identity matrix.

\begin{definition} 
	An admissible $N$-fan subsolution to the Euler system \eqref{eq:euler}-\eqref{eq:entropy} with Riemann initial data \eqref{eq:Riemann} is a quintuple $(\ov{\vr},\ov{\vv},\ov{U},\ov{C},\ov{p}):(0,\infty)\times\R^2\rightarrow(\R^+\times\R^2\times\Sz\times\R^+\times\R^+)$ of piecewise constant functions, which satisfies the following properties:
	\begin{enumerate}
		\item There exists an $N$-fan partition $\Omega_-,\Omega_1,\Omega_+$ (or $\Omega_-,\Omega_1,\Omega_2,\Omega_+$) of $(0,\infty)\times\R^2$ and for $i\in\{1,N\}$ there exist constants $\vr_i\in\R^+$, $\vv_i\in\R^2$, $U_i\in\Sz$, $C_i\in\R^+$ and $p_i\in\R^+$, such that
		\begin{align*}
		(\ov{\vr},\ov{\vv},\ov{U},\ov{C},\ov{p})&=\sum\limits_{i\in\{-,+\}} \bigg(\vr_i\,,\,\vv_i\,,\,\vv_i\otimes \vv_i - \frac{|\vv_i|^2}{2}\,\id\,,\,|\vv_i|^2\,,\,p_i\bigg)\,\mathbf{1}_{\Omega_i} + \sum\limits_{i=1}^N (\vr_i\,,\,\vv_i\,,\,U_i\,,\,C_i\,,\,p_i)\,\mathbf{1}_{\Omega_i},
		\end{align*}
		where $\vr_{\pm},\vv_{\pm},p_\pm$ are the constants given by the initial condition \eqref{eq:Riemann}. 
		\item For $i\in\{1,N\}$ the following inequality holds in the sense of definiteness:
		\begin{equation*}
		\vv_i\otimes \vv_i-U_i < \frac{C_i}{2}\,\id.
		\end{equation*}
		\item For all test functions $(\psi,\bfphi,\phi)\in \DC([0,\infty)\times\R^2,\R\times\R^2\times\R)$ the following identities hold:
		\begin{align*} 
		\intt{\intx{\big[\ov{\vr}\,\partial_t\psi + \ov{\vr}\,\ov{\vv}\cdot\Grad\psi\big]}} + \intx{ \vr^0\,\psi(0,\cdot)}\ &=\ 0, \\
		\int_0^\infty\int_{\R^2}\bigg[\ov{\vr}\,\ov{\vv}\cdot\partial_t\bfphi + \ov{\vr}\,\ov{U}:\Grad\bfphi + \bigg(\ov{p} + \half\,\ov{\vr}\,\ov{C} \bigg)\,\Div\bfphi\bigg]\dx\dt  + \intx{ \vr^0\,\vv^0\cdot\bfphi(0,\cdot)}\  &=\ 0, \\
		\int_0^\infty\int_{\R^2}\bigg[\bigg(\half\,\ov{\vr}\,\ov{C} + c_v\,\ov{p}\bigg)\,\partial_t\phi +\bigg(\half\,\ov{\vr}\,\ov{C} + (c_v+1)\,\ov{p}\bigg)\,\ov{\vv}\cdot\Grad\phi\bigg]\dx\dt & \\
		+ \intx{\bigg(\vr^0\,\frac{|\vv^0|^2}{2} + c_v\,p^0\bigg)\,\phi(0,\cdot)} \ &=\ 0. 
		\end{align*}
		\item For every non-negative test function $\varphi\in \DC([0,\infty)\times\R^2,\R_0^+)$ the inequality
		\begin{align*}
		&\intt{\intx{\Big[\ov{\vr}\,s(\ov{\vr},\ov{p})\,\partial_t\varphi + \ov{\vr}\,s(\ov{\vr},\ov{p})\,\ov{\vv}\cdot\Grad\varphi\Big]}} + \intx{ \vr^0\,s(\vr^0,p^0)\,\varphi(0,\cdot)} \leq 0 
		\end{align*}
		is fulfilled.
	\end{enumerate}
	\label{defn:fansubs}
\end{definition}

\subsection{Sufficient condition for non-uniqueness}

The key tool in the proof of Theorems \ref{thm:mainWUE} and \ref{t:main} is the following theorem relating existence of infinitely many admissible $N$-fan solutions to the existence of a single admissible $N$-fan subsolution

\begin{theorem}
	Let $(\vr_\pm,\vv_\pm,p_\pm)$ be such that there exists an admissible $N$-fan subsolution $(\ov{\vr},\ov{\vv},\ov{U},\ov{C},\ov{p})$ to the Cauchy problem \eqref{eq:euler}-\eqref{eq:Riemann}. Then there are infinitely many admissible weak $N$-fan solutions $(\vr,\vv,p)$ to \eqref{eq:euler}-\eqref{eq:Riemann} with the following properties:
	\begin{itemize}
		\item $(\vr,p)=(\ov{\vr},\ov{p})$,
		\item $\vv= \vv_\pm$ on $\Omega_\pm$, 
		\item $|\vv|^2= C_i$ a.e. on $\Omega_i$, $i=1,N$.
	\end{itemize}
	\label{thm:condition}
\end{theorem}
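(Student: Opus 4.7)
The plan is to treat each middle region $\Omega_i$ of the fan partition separately by a convex integration argument in the spirit of De Lellis and Sz\'ekelyhidi. On any such region both $\ov{\vr}\equiv \vr_i$ and $\ov{p}\equiv p_i$ are constant, so the continuity and momentum equations for a prospective weak solution $(\vr,\vv,p)=(\vr_i,\vv,p_i)$ subject to the pointwise constraint $|\vv|^2 = C_i$ reduce to the linear system
\begin{equation*}
\Div \vv = 0, \qquad \partial_t \vv + \Div U = 0,
\end{equation*}
for an auxiliary field $U:\Omega_i\to\Sz$, coupled with the pointwise algebraic constraint $\vv\otimes\vv - U = \frac{C_i}{2}\,\id$. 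The constants $(\vv_i,U_i)$ supplied by the admissible fan subsolution satisfy the linear system trivially, and the definiteness inequality $\vv_i\otimes\vv_i - U_i < \frac{C_i}{2}\,\id$ places $(\vv_i,U_i)$ strictly inside the $\Lambda$-convex hull of the pointwise constraint manifold, where $\Lambda$ is the wave cone of the linear system above. This is precisely the setting in which the plane-wave / Baire category machinery of \cite{DLSz2,ChiDelKre15} applies.

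Invoking that machinery on each $\Omega_i$ independently, I would obtain an infinite family of bounded measurable pairs $(\vv,U):\Omega_i\to\R^2\times\Sz$ such that the perturbation $(\vv-\vv_i,U-U_i)$ is compactly supported in $\Omega_i$, the two linear equations above hold in the sense of distributions, and the algebraic constraint $\vv\otimes\vv - U = \frac{C_i}{2}\,\id$ is satisfied almost everywhere. Gluing these perturbations across the fan, and setting $\vr=\ov{\vr}$, $p=\ov{p}$ throughout $(0,\infty)\times\R^2$ together with $\vv=\vv_\pm$ on $\Omega_\pm$, yields infinitely many candidate triples $(\vr,\vv,p)$ of the claimed structure.

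The remaining task is to verify that every such candidate is a global admissible weak solution of \eqref{eq:euler}-\eqref{eq:Riemann}. The pointwise identities
\begin{equation*}
\vr\,\vv\otimes\vv + p\,\id = \vr\,U + \Big(p+\half\,\vr\,C\Big)\id, \qquad \half\,\vr|\vv|^2 + \vr\,e(\vr,p) = \half\,\vr\,C + c_v\,p,
\end{equation*}
valid almost everywhere with $C:=|\vv|^2=\ov{C}$, together with the analogous formula for the energy flux, convert the nonlinear continuity, momentum and energy equations of \eqref{eq:euler} into the linear ones already satisfied by the subsolution in Definition \ref{defn:fansubs}. For the entropy inequality one uses that $\vr\,s(\vr,p)$ coincides pointwise with $\ov{\vr}\,s(\ov{\vr},\ov{p})$, while on each $\Omega_i$ the fluctuation of the entropy flux is the constant $\vr_i\,s(\vr_i,p_i)$ times the divergence-free, compactly supported field $\vv-\vv_i$, which contributes nothing after integration by parts. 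The main obstacle I anticipate lies in the convex integration step itself -- specifically, in localizing the plane-wave perturbations to the unbounded wedges $\Omega_i$ while preserving boundedness of $\vv$ and pointwise saturation of the constraint; this is however standard by now and can be handled, for instance, by exhausting $\Omega_i$ with bounded subdomains and summing countably many compactly supported perturbations, following the isentropic analyses of \cite{ChiDelKre15,ChiKre14,CKMS}.
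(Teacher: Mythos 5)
Your proposal takes essentially the same route as the paper: both reduce to the De Lellis--Sz\'ekelyhidi convex integration result (Proposition \ref{prop:convint}, i.e.\ Lemma 3.7 of \cite{ChiDelKre15}) applied on each middle wedge $\Omega_i$, then verify the weak formulation by substituting the pointwise algebraic identities $\vv\otimes\vv-U=\frac{C_i}{2}\id$ and $|\vv|^2=C_i$ into \eqref{eq:euler} and observing that the fluctuation terms cancel because the perturbations are weakly divergence-free, solve the linearized momentum balance, and vanish outside $\Omega_i$. The localization concern you flag at the end is already resolved at the level of that cited lemma, which holds for arbitrary open $\Omega\subset\R\times\R^2$ (including unbounded wedges), so no separate exhaustion argument is required.
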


The proof of Theorem \ref{thm:condition} is based on a convex integration technique for the pressureless incompressible Euler equations which was introduced in seminal works of De Lellis and Sz\'ekelyhidi \cite{DLSz1,DLSz2} and is summarized in the following Proposition. 

\begin{propo} 
	Let $(\til{\vv},\til{U})\in\R^2\times \Sz$ and $C>0$ such that  $\til{\vv}\otimes\til{\vv}-\til{U}<\frac{C}{2}\,\id$. Furthermore let $\Omega\subset\R\times\R^2$ open. Then there exist infinitely many maps $(\un{\vv},\un{U})\in L^\infty(\R\times\R^2,\R^2\times\Sz)$ with the following properties.
	\begin{enumerate}
		\item $\un{\vv}$ and $\un{U}$ vanish outside $\Omega$.
		\item For all test functions $(\psi,\bfphi)\in \DC(\R\times\R^2,\R\times\R^2)$ it holds that 
		\begin{align*}
		&\iint_\Omega\un{\vv}\cdot\Grad \psi\,\dx\,\dt\ =\ 0, \\
		&\iint_\Omega(\un{\vv}\cdot\partial_t \bfphi + \un{U}:\Grad \bfphi)\dx\dt\ =\ 0. 
		\end{align*}
		\item $(\til{\vv}+\un{\vv})\otimes (\til{\vv}+\un{\vv}) - (\til{U}+\un{U}) = \frac{C}{2}\,\id$ is fulfilled almost everywhere in $\Omega$. 
	\end{enumerate}
	\label{prop:convint}
\end{propo}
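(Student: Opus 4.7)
The plan is to apply the Baire-category formulation of the convex integration scheme of De Lellis and Sz\'ekelyhidi \cite{DLSz1,DLSz2}, with the pressureless incompressible Euler system of item 2 playing the role of the linear constraints and the identity of item 3 playing the role of the nonlinear pointwise target. Set
$$K := \Big\{(v,U)\in\R^2\times\Sz : v\otimes v - U = \tfrac{C}{2}\,\id\Big\},$$
which, since $U$ is traceless, is just the circle $\{|v|^2 = C\}$ paired with the traceless part of $v\otimes v$. The set $K$ lies on the boundary of the open convex set
$$\mathcal{U}:=\Big\{(v,U)\in\R^2\times\Sz:v\otimes v-U<\tfrac{C}{2}\,\id\Big\}$$
(convexity follows from $v\mapsto v\otimes v$ being convex in the positive semidefinite order, since $\lambda v_1\otimes v_1+(1-\lambda)v_2\otimes v_2-(\lambda v_1+(1-\lambda)v_2)^{\otimes 2}=\lambda(1-\lambda)(v_1-v_2)\otimes(v_1-v_2)\geq 0$), and $(\til{\vv},\til{U})\in\mathcal{U}$ by hypothesis.

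First I would introduce $X_0$, the set of smooth maps $(\un{\vv},\un{U})\in \DC(\R\times\R^2,\R^2\times\Sz)$ with compact support in $\Omega$, solving the two linear PDEs of item 2 classically, and satisfying the strict pointwise inclusion $(\til{\vv}+\un{\vv}(t,\vx),\til{U}+\un{U}(t,\vx))\in\mathcal{U}$ for every $(t,\vx)\in\Omega$; the zero map belongs to $X_0$. Let $X$ be the closure of $X_0$ in the weak-$*$ topology of $L^\infty$, restricted to an $L^\infty$-ball large enough to contain $X_0$, so that $X$ is a complete metric space. On $X$ define the defect functional
$$I(\un{\vv},\un{U}) \ :=\ \iint_\Omega \Big(C-|\til{\vv}+\un{\vv}|^2\Big)\,\dx\,\dt,$$
which is non-negative on $X$ (as the integral of the trace of the positive semidefinite matrix $\tfrac{C}{2}\,\id-(\til{\vv}+\un{\vv})\otimes(\til{\vv}+\un{\vv})+(\til{U}+\un{U})$, using that $\til{U}+\un{U}$ is traceless) and weak-$*$ upper semicontinuous.

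The main obstacle, and the heart of convex integration, is the \emph{perturbation lemma}: for every $\eta>0$ there exists $\beta(\eta)>0$ such that whenever $(\un{\vv},\un{U})\in X_0$ satisfies $I(\un{\vv},\un{U})\geq\eta$, there exists $(\un{\vv}',\un{U}')\in X_0$ arbitrarily close to $(\un{\vv},\un{U})$ in the weak-$*$ metric but with $I(\un{\vv}',\un{U}')\leq I(\un{\vv},\un{U})-\beta(\eta)$. This rests on two ingredients: an elementary convex-geometric observation that any interior point of $\mathcal{U}$ lies on a non-trivial line segment within $\mathcal{U}$ whose endpoints lie in $K$ and whose direction belongs to the wave cone $\Lambda$ of the linear system (i.e. admits a plane-wave solution); and a localization step, based on compactly supported potentials for the linear system, that packs high-frequency oscillations along such segments into small balls where the defect is large. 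Adding these localized oscillations to $(\un{\vv},\un{U})$ yields the required $(\un{\vv}',\un{U}')$, and the quantitative decrease of $I$ comes from plane-wave orthogonality together with the choice of amplitude proportional to the local defect.

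Once the perturbation lemma is proved, a standard Baire argument concludes: the sublevel sets $\{I<1/n\}$ are open in $X$ by upper semicontinuity and dense by the perturbation lemma, so $\{I=0\}=\bigcap_n\{I<1/n\}$ is residual in the complete metric space $X$, hence uncountable. Every element satisfies items 1 and 2 automatically, as these pass to weak-$*$ limits from $X_0$. Moreover $I=0$ forces $|\til{\vv}+\un{\vv}|^2=C$ a.e. in $\Omega$, and combined with the non-strict inclusion $(\til{\vv}+\un{\vv})\otimes(\til{\vv}+\un{\vv})-(\til{U}+\un{U})\leq\tfrac{C}{2}\,\id$ inherited from $\ov{\mathcal{U}}$ and the resulting equality of traces on both sides, this forces item 3, since a $2\times 2$ positive semidefinite matrix with zero trace must vanish.
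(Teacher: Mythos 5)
The paper does not prove this proposition itself; it cites \cite[Lemma 3.7]{ChiDelKre15}, whose argument is exactly the Baire-category convex integration scheme you outline, so your approach matches the intended one. There is, however, a genuine gap in your choice of defect functional. You take
\[
I(\un{\vv},\un{U})=\iint_\Omega\big(C-|\til{\vv}+\un{\vv}|^2\big)\dx\dt,
\]
but the domains $\Omega$ to which the proposition is applied in this paper are the wedges $\Omega_i=\{(t,\vx):t>0,\ \mu_{i-1}t<x_2<\mu_i t\}$, which have \emph{infinite} Lebesgue measure in $\R\times\R^2$. Since every element of your $X_0$ is compactly supported and the hypothesis gives $|\til{\vv}|^2<C$ strictly, the integrand equals the positive constant $C-|\til{\vv}|^2$ outside a compact set, so $I\equiv+\infty$ on $X_0$ and on $X$; the sublevel sets $\{I<1/n\}$ are then empty and the Baire argument collapses. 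This is not a minor technicality: as written the proof proves nothing.

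The standard fix, used in the cited reference and elsewhere, is to replace $I$ by a weighted or exhausted version, e.g. $I(\un{\vv},\un{U})=\sum_{k\geq 1}2^{-k}\min\!\left(1,\ \iint_{\Omega\cap B_k}\big(C-|\til{\vv}+\un{\vv}|^2\big)\dx\dt\right)$ or to integrate against a fixed strictly positive $L^1$ weight on $\Omega$. Such a functional is finite and bounded on $X$, still non-negative (by the pointwise PSD bound you invoke), still weak-$*$ upper semicontinuous (each summand is, being a truncated integral of a concave integrand over a finite-measure set, and the series converges uniformly), and $I=0$ still forces $|\til{\vv}+\un{\vv}|^2=C$ a.e.\ in $\Omega$, from which your trace argument finishes. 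The perturbation lemma must then also be proved for the weighted functional, which requires showing that localized plane-wave oscillations decrease each truncated defect by a quantitative amount; this works because the localization step already acts on finitely many balls at a time. Aside from this, the rest of your sketch (the identification of $K$ and $\mathcal{U}$, convexity of $\mathcal{U}$, the wave cone/segment geometry, the Baire residuality argument, and the final trace/PSD step) is correct and follows the argument the paper delegates to \cite{ChiDelKre15}.
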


For the proof of Proposition \ref{prop:convint} we refer to \cite[Lemma 3.7]{ChiDelKre15}.

The proof of Theorem \ref{thm:condition} using Proposition \ref{prop:convint} is now quite straightforward.
\begin{proof} 
	Let $(\ov{\vr},\ov{\vv},\ov{U},\ov{C},\ov{p})$ an admissible $N$-fan subsolution. Furthermore for $i=1,N$ let $(\un{\vv}_i,\un{U}_i)\in L^\infty(\R\times\R^2,\R^2\times\Sz)$ pairs of functions as in Proposition \ref{prop:convint} with $\til{\vv}=\vv_i$, $\til{U}=U_i$, $C=C_i$ and $\Omega=\Omega_i$. It suffices to prove that $(\vr,\vv,p) = \Big(\ov{\vr},\ov{\vv}+\sum\limits_{i=1}^N\un{\vv}_i,\ov{p}\Big)$ is an admissible weak solution to \eqref{eq:euler}-\eqref{eq:Riemann}. The fact that it is indeed a $N$-fan solution is clear.
	
	Let $(\psi,\bfphi,\phi)\in \DC([0,\infty)\times\R^2,\R\times\R^2\times\R)$ be test functions. Then we get
	\begin{align*}
	&\intt{\intx{\big[\vr\,\partial_t\psi + \vr\,\vv\cdot\Grad\psi\big]}} + \intx{ \vr^0\,\psi(0,\cdot)} \\
	&= \intt{\intx{\bigg[\ov{\vr}\,\partial_t\psi + \ov{\vr}\,\bigg(\ov{\vv} + \sum\limits_{i=1}^N\un{\vv}_i\bigg)\cdot\Grad\psi\bigg]}} + \intx{ \vr^0\,\psi(0,\cdot)} \\
	&= \sum\limits_{i=1}^N\iint_{\Omega_i}\vr_i\,\un{\vv}_i\cdot\Grad\psi\dx\dt \quad=\quad 0,
	\end{align*} 
	
	\begin{align*}
	&\intt{\intx{\Big[\vr\,\vv\cdot\partial_t\bfphi + \vr\,\vv\otimes \vv : \Grad\bfphi + p \,\Div\bfphi\Big]}}  + \intx{ \vr^0\,\vv^0\cdot\bfphi(0,\cdot)} \\
	&=\int_0^\infty\int_{\R^2}\bigg[\ov{\vr}\,\ov{\vv}\cdot\partial_t\bfphi + \ov{\vr}\,\ov{U}:\Grad\bfphi + \bigg(\ov{p} + \half\,\ov{\vr}\,\ov{C} \bigg)\,\Div\bfphi\bigg]\dx\dt  + \intx{ \vr^0\,\vv^0 \cdot \bfphi(0,\cdot)} \\
	&\quad + \sum\limits_{i=1}^N \iint_{\Omega_i}\Big[\vr_i\,\un{\vv}_i\cdot\partial_t\bfphi + \vr_i\,\un{U}_i : \Grad\bfphi\Big]\dx\dt  \quad =\quad 0,
	\end{align*} 
	
	\begin{align*}
	&\intt{\intx{\bigg[\bigg(\half\vr|\vv|^2 + c_v\,p\bigg)\,\partial_t\phi +\bigg(\half\vr|\vv|^2 + (c_v+1)\,p\bigg)\,\vv\cdot\Grad\phi\bigg]}}  \\
	&\quad + \intx{\bigg(\vr^0\,\frac{|\vv^0|^2}{2} + c_v\,p^0\bigg)\,\phi(0,\cdot)} \\
	&=\int_0^\infty\int_{\R^2}\bigg[\bigg(\half\,\ov{\vr}\,\ov{C} + c_v\,\ov{p}\bigg)\,\partial_t\phi + \bigg(\half\,\ov{\vr}\,\ov{C} + (c_v+1)\,\ov{p}\bigg)\,\ov{\vv}\cdot\Grad\phi\bigg]\dx\dt \\
	&\qquad + \intx{ \bigg(\vr^0\,\frac{|\vv^0|^2}{2} + c_v\,p^0\bigg)\,\phi(0,\cdot)} + \sum\limits_{i=1}^N \iint_{\Omega_i} \bigg(\vr_i\,\frac{C_i}{2} + (c_v+1)\,p_i\bigg)\,\un{\vv}_i\cdot\Grad\phi\dx\dt  \quad = \quad 0.
	\end{align*}
	
	For every non-negative test function $\varphi\in \DC([0,\infty)\times\R^2,\R_0^+)$ it holds
	\begin{align*}
	&\intt{\intx{\bigg[\vr\,s(\vr,p)\,\partial_t\varphi + \vr\,s(\vr,p)\,\vv\cdot\Grad\varphi\bigg]}} + \intx{ \vr^0\,s(\vr^0,p^0)\,\varphi(0,\cdot)}  \\
	&= \intt{\intx{\Big[\ov{\vr}\,s(\ov{\vr},\ov{p})\,\partial_t\varphi + \ov{\vr}\,s(\ov{\vr},\ov{p})\,\ov{\vv}\cdot\Grad\varphi\Big]}} + \intx{\vr^0\,s(\vr^0,p^0)\,\varphi(0,\cdot)} \\
	&\qquad + \sum\limits_{i=1}^N \iint_{\Omega_i}\vr_i\,s(\vr_i,p_i)\,\un{\vv}_i\cdot\Grad\varphi\dx\dt \quad \leq\quad  0.
	\end{align*}
\end{proof}

\subsection{Algebraic equations and inequalities} 

Since the admissible $N$-fan subsolution consists of a quintuple of piecewise constant functions which satisfy a certain set of partial differential equations and inequalities, it is easy to observe that these differential constraints are equivalent to a system of algebraic equations and inequalities which arise from the Rankine-Hugoniot conditions on the interfaces between the sets of the $N$-fan partition. More precisely we have the following.

\begin{propo} 
	Let $\vr_\pm, p_\pm\in\R^+$, $\vv_\pm\in\R^2$ be given. The constants $\mu_0,\mu_1\in\R$ (resp. $\mu_0,\mu_1,\mu_2\in\R$) and $\vr_i,p_i\in\R^+$,
	\begin{align*}
	\vv_i&=\left(\begin{array}{c}
	\alpha_i \\
	\beta_i
	\end{array}\right)\in\R^2, & U_i&=\left(\begin{array}{rr}
	\gamma_i & \delta_i \\
	\delta_i & -\gamma_i
	\end{array}\right)\in\Sz, 
	\end{align*}
	and $C_i\in\R^+$ (for $i=1,N$) define an admissible $N$-fan subsolution to the Cauchy problem \eqref{eq:euler}-\eqref{eq:Riemann} if and only if they fulfill the following algebraic equations and inequalities: 
	\begin{itemize}
		\item Order of the speeds:
		\begin{align}
		\mu_0&<\mu_1 & &\text{resp.} & \mu_0&<\mu_1<\mu_2
		\label{eq:order}
		\end{align}
		\item Rankine Hugoniot conditions on the left interface:
		\begin{align}
		\mu_0\,(\vr_- - \vr_1) &= \vr_-\,v_{-,2} - \vr_1\,\beta_1 \label{eq:rhl1}\\
		\mu_0\,(\vr_-\,v_{-,1} - \vr_1\,\alpha_1) &= \vr_-\,v_{-,1}\,v_{-,2} - \vr_1\,\delta_1 \label{eq:rhl2}\\
		\mu_0\,(\vr_-\,v_{-,2} - \vr_1\,\beta_1) &= \vr_-\,v_{-,2}^2 - \vr_1\,\bigg(\frac{C_1}{2}-\gamma_1\bigg) + p_- - p_1  \label{eq:rhl3} 
		\end{align}
		\begin{equation}
		\begin{split}
		&\mu_0\,\bigg(\half\vr_-\,|v_-|^2 + c_v\,p_- - \vr_1\,\frac{C_1}{2} - c_v\,p_1\bigg) = \\
		&\quad\bigg(\half\vr_-\,|v_-|^2 + (c_v+1)\,p_-\bigg)\,v_{-,2} - \bigg(\vr_1\,\frac{C_1}{2} + (c_v+1)\,p_1\bigg)\,\beta_1
		\end{split}
		\label{eq:rhl4}
		\end{equation}
		
		\item Rankine Hugoniot conditions on the right interface:
		\begin{align}
		\mu_N\,(\vr_N - \vr_+) &= \vr_N\,\beta_N - \vr_+\,v_{+,2} \label{eq:rhr1}\\
		\mu_N\,(\vr_N\,\alpha_N - \vr_+\,v_{+,1}) &= \vr_N\,\delta_N - \vr_+\,v_{+,1}\,v_{+,2} \label{eq:rhr2}\\
		\mu_N\,(\vr_N\,\beta_N - \vr_+\,v_{+,2}) &= \vr_N\,\bigg(\frac{C_N}{2}-\gamma_N\bigg) - \vr_+\,v_{+,2}^2 + p_N - p_+ \label{eq:rhr3}
		\end{align}
		\begin{equation}
		\begin{split}
		&\mu_N\,\bigg(\vr_N\,\frac{C_N}{2} + c_v\,p_N - \half\vr_+\,|v_+|^2 - c_v\,p_+\bigg) = \\
		&\quad\bigg(\vr_N\,\frac{C_N}{2} + (c_v+1)\,p_N\bigg)\,\beta_N - \bigg(\half\vr_+\,|v_+|^2 + (c_v+1)\,p_+\bigg)\,v_{+,2}
		\end{split}
		\label{eq:rhr4}
		\end{equation}
		
		\item If $N=2$ we additionally have Rankine Hugoniot conditions on the middle interface: 
		\begin{align}
		\mu_1\,(\vr_1 - \vr_2) &= \vr_1\,\beta_1 - \vr_2\,\beta_2 \label{eq:rhm1}\\
		\mu_1\,(\vr_1\,\alpha_1 - \vr_2\,\alpha_2) &= \vr_1\,\delta_1 - \vr_2\,\delta_2 \label{eq:rhm2}\\
		\mu_1\,(\vr_1\,\beta_1 - \vr_2\,\beta_2) &= \vr_1\,\bigg(\frac{C_1}{2}-\gamma_1\bigg) - \vr_2\,\bigg(\frac{C_2}{2}-\gamma_2\bigg) + p_1 - p_2  \label{eq:rhm3}
		\end{align}
		\begin{equation}
		\begin{split}
		&\mu_1\,\bigg(\vr_1\,\frac{C_1}{2} + c_v\,p_1 - \vr_2\,\frac{C_2}{2} - c_v\,p_2\bigg) = \\
		&\quad\bigg(\vr_1\,\frac{C_1}{2} + (c_v+1)\,p_1\bigg)\,\beta_1 - \bigg(\vr_2\,\frac{C_2}{2} + (c_v+1)\,p_2\bigg)\,\beta_2
		\end{split}
		\label{eq:rhm4}
		\end{equation}
		
		\item Subsolution conditions for $i=1,N$:
		\begin{align}
		C_i - \alpha_i^2 - \beta_i^2 &> 0 \label{eq:sc1}\\
		\bigg(\frac{C_i}{2} - \alpha_i^2 + \gamma_i\bigg) \bigg(\frac{C_i}{2} - \beta_i^2 - \gamma_i\bigg) - (\delta_i-\alpha_i\,\beta_i)^2 &> 0 \label{eq:sc2} 
		\end{align}
		\item Admissibility condition on the left interface:
		\begin{equation}
		\mu_0\,\Big(\vr_1\,s(\vr_1,p_1) - \vr_-\,s(\vr_-,p_-)\Big)\leq \vr_1\,s(\vr_1,p_1)\,\beta_1 - \vr_-\,s(\vr_-,p_-)\,v_{-,2}
		\label{eq:adml}
		\end{equation}
		\item Admissibility condition on the right interface: 
		\begin{equation}
		\mu_N\,\Big(\vr_+\,s(\vr_+,p_+) - \vr_N\,s(\vr_N,p_N)\Big) \leq \vr_+\,s(\vr_+,p_+)\,v_{+,2} - \vr_N\,s(\vr_N,p_N)\,\beta_N
		\label{eq:admr}
		\end{equation}
		\item If $N=2$ we additionally have an admissibility condition on the middle interface: 
		\begin{equation}
		\mu_1\,\Big(\vr_2\,s(\vr_2,p_2) - \vr_1\,s(\vr_1,p_1)\Big) \leq \vr_2\,s(\vr_2,p_2)\,\beta_2 - \vr_1\,s(\vr_1,p_1)\,\beta_1
		\label{eq:admm}
		\end{equation}
	\end{itemize}
	\label{prop:algequa}
\end{propo}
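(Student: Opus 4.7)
The strategy is to turn the distributional PDE identities of Definition \ref{defn:fansubs}(3) and the entropy inequality (4) into jump conditions across each interface $\{x_2 = \mu_k\,t\}$ of the $N$-fan partition, and to recognize the subsolution matrix inequality (2) as the Sylvester criterion for strict positive definiteness of a symmetric $2\times 2$ matrix. Both directions of the equivalence follow from the same computation, since each step is reversible.

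For item (3), I would first test against functions compactly supported in $\{t > 0\}$. Integration by parts against the piecewise constant quintuple $(\ov{\vr},\ov{\vv},\ov{U},\ov{C},\ov{p})$ reduces each of the three identities to a sum of line integrals along the interfaces, and the distinctness of the $\mu_k$ together with the arbitrariness of the test functions isolates each interface separately. The resulting jump at $\{x_2 = \mu_k\,t\}$ has the standard form \emph{speed times jump of conserved density equals jump of flux}. Reading this off gives \eqref{eq:rhl1}-\eqref{eq:rhl4} on the left, \eqref{eq:rhr1}-\eqref{eq:rhr4} on the right, and, when $N = 2$, \eqref{eq:rhm1}-\eqref{eq:rhm4} in the middle. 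The fact that these look like the usual Euler jumps on the outer interfaces rests on the structural identity built into item (1): on $\Omega_\pm$ one has $U_\pm = \vv_\pm \otimes \vv_\pm - \frac{|\vv_\pm|^2}{2}\,\id$ and $C_\pm = |\vv_\pm|^2$, so that $\ov{\vr}\,\ov{U} + \big(\ov{p} + \frac12\,\ov{\vr}\,\ov{C}\big)\,\id$ collapses to $\vr_\pm\,\vv_\pm\otimes\vv_\pm + p_\pm\,\id$ and the energy flux collapses analogously. The same localization of item (4), using non-negative test functions, produces the entropy jump inequalities \eqref{eq:adml}, \eqref{eq:admr}, and (if $N = 2$) \eqref{eq:admm}. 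Test functions that do not vanish at $t = 0$ contribute a boundary term which is exactly cancelled by the prescribed initial integrals, because $(\vr_\pm, \vv_\pm, p_\pm)$ is by construction the trace of $(\vr^0, \vv^0, p^0)$ on $\{x_2 \lessgtr 0\}$, so no extra condition is generated at $t = 0$.

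For item (2), I would compute entrywise
\[
\frac{C_i}{2}\,\id - \vv_i\otimes\vv_i + U_i = \begin{pmatrix} \frac{C_i}{2} - \alpha_i^2 + \gamma_i & \delta_i - \alpha_i\beta_i \\ \delta_i - \alpha_i\beta_i & \frac{C_i}{2} - \beta_i^2 - \gamma_i \end{pmatrix},
\]
and note that a symmetric $2\times 2$ matrix is strictly positive definite if and only if its trace and its determinant are strictly positive. The trace condition is exactly \eqref{eq:sc1} and the determinant condition is exactly \eqref{eq:sc2}, giving the equivalence for item (2). The ordering \eqref{eq:order} is built into the definition of the $N$-fan partition.

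The main, and very minor, obstacle is simply bookkeeping the sign and orientation conventions in the interface integrals so that, for instance, the continuity jump at $\mu_0$ reads $\mu_0(\vr_- - \vr_1) = \vr_- v_{-,2} - \vr_1\beta_1$ rather than with the opposite sign, and so that the inward-pointing normal at $t = 0$ gives the correct sign for the initial integrals. Once this is settled once and for all on the left interface, the analogous rearrangements for the right and (if $N = 2$) middle interfaces, and for momentum, energy, and entropy, are immediate, and the equivalence claimed in Proposition \ref{prop:algequa} follows in both directions.
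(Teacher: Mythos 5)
Your proposal is correct and follows essentially the same route as the paper: the paper's proof of Proposition \ref{prop:algequa} is a one-sentence remark stating that the differential constraints for piecewise constant functions reduce to Rankine--Hugoniot jump conditions in the standard way, and that conditions \eqref{eq:sc1}--\eqref{eq:sc2} express positivity of the trace and determinant of $\frac{C_i}{2}\id - \vv_i\otimes\vv_i + U_i$, which is equivalent to positive definiteness. You have simply written out the bookkeeping that the paper leaves implicit, including the correct observation that on $\Omega_\pm$ the ansatz makes the subsolution fluxes collapse to the genuine Euler fluxes.
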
 

\begin{proof}
	There is really nothing to prove in this proposition, differential constraints for piecewise constant functions translate to Rankine-Hugoniot conditions in a standard way and the subsolution conditions \eqref{eq:sc1}-\eqref{eq:sc2} state that the matrix $\frac{C_i}{2}\id - \vv_i\otimes\vv_i + U_i$ is positive definite, which holds if and only if its trace and its determinant are both positive.
\end{proof}

As we already mentioned, in order to prove our main theorems, due to Theorem \ref{thm:condition} it is always enough to find a single $N$-fan subsolution. Moreover, we are going to make a special ansatz for the subsolutions. Recall that we assume throughout this paper that $v_{-,1} = v_{+,1} = 0$. We will look for $N$-fan subsolutions satisfying
\begin{equation}\label{eq:choice}
\alpha_i = 0 \qquad \& \qquad \delta_i = \alpha_i\beta_i = 0,
\end{equation}
$i = 1,N$. One can easily check that this choice implies that equations \eqref{eq:rhl2}, \eqref{eq:rhr2} and \eqref{eq:rhm2} are trivially satisfied. We also note that in the case $N = 1$ this is in fact a consequence of the set of Rankine-Hugoniot conditions, for more details see \cite[Lemma 4.2]{ChiKre14}.

For simplicity of notation, we will use in the rest of the paper notation $v_\pm$ in place of $v_{\pm,2}$ and similarly $v_M$ instead of $v_{M,2}$.

\section{Proof of Theorem \ref{thm:mainWUE}} \label{s:proofB}

\subsection{Reduction to special case}

We recall that the proof of Theorem \ref{thm:mainWUE} is finished as soon as we find a single admissible $2$-fan subsolution. Since the structure of the $2$-fan subsolution is similar to the 1D Riemann solution to the problem, we will look for the $2$-fan subsolution as a suitable small perturbation of the 1D Riemann solution.

Moreover, we claim that we can make the following choice without loss of generality.

\begin{propo} \label{prop:mainWUE}
	Let the assumption of Theorem \ref{thm:mainWUE} hold and assume furthermore that $v_M = 0$, where $v_M$ is the second component of the velocity of the 1D Riemann solution (see Proposition \ref{prop:standard}). Then there exist infinitely many admissible weak $2$-fan solutions to the Riemann problem \eqref{eq:euler}-\eqref{eq:Riemann}.
\end{propo}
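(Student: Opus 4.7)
By Theorem \ref{thm:condition}, it suffices to exhibit a single admissible $2$-fan subsolution for the given Riemann data. Combined with the ansatz \eqref{eq:choice}, Proposition \ref{prop:algequa} reduces this task to solving, in the 13 real unknowns $\vr_1,\vr_2,p_1,p_2,\beta_1,\beta_2,\gamma_1,\gamma_2,C_1,C_2,\mu_0,\mu_1,\mu_2$, the nine Rankine-Hugoniot identities \eqref{eq:rhl1}, \eqref{eq:rhl3}, \eqref{eq:rhl4}, \eqref{eq:rhm1}, \eqref{eq:rhm3}, \eqref{eq:rhm4}, \eqref{eq:rhr1}, \eqref{eq:rhr3}, \eqref{eq:rhr4} (equations \eqref{eq:rhl2}, \eqref{eq:rhm2}, \eqref{eq:rhr2} are trivial by the ansatz and $v_{\pm,1}=0$), subject to the strict inequalities \eqref{eq:order}, \eqref{eq:sc1}, \eqref{eq:sc2}, and to the entropy inequalities \eqref{eq:adml}, \eqref{eq:admm}, \eqref{eq:admr}. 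This leaves a formal four-parameter family of candidates to be explored.

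The key observation is that the $1$D Riemann solution itself corresponds to a degenerate point of this algebraic system: take $\vr_1=\vr_{M-}$, $\vr_2=\vr_{M+}$, $p_1=p_2=p_M$, $\beta_1=\beta_2=0$ (using the assumption $v_M=0$), $\gamma_1=\gamma_2=0$, $C_1=C_2=0$, $\mu_0=\sigma_-$, $\mu_1=0$, $\mu_2=\sigma_+$. All nine Rankine-Hugoniot identities are verified; the ordering \eqref{eq:order} is strict because the speeds in the $1$D Riemann solution satisfy $\sigma_-<v_M=0<\sigma_+$; the outer entropy inequalities \eqref{eq:adml}, \eqref{eq:admr} are strict because the $1$-shock and $3$-shock produce strict entropy dissipation. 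What fails is exactly the subsolution conditions \eqref{eq:sc1}, \eqref{eq:sc2}, which hold only as equalities, as well as \eqref{eq:admm}, which likewise degenerates to equality (the contact discontinuity is reversible).

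The plan is then to perturb this base point by introducing small parameters $\varepsilon_1,\varepsilon_2>0$ and setting $C_i=\varepsilon_i$, so that \eqref{eq:sc1} becomes automatic for small $\beta_i$, and to solve the nine Rankine-Hugoniot equations for the remaining nine unknowns via the implicit function theorem applied at the base point, treating $\beta_1,\beta_2$ as two further small parameters. The nondegeneracy of the relevant Jacobian should follow from the classical invertibility of the Rankine-Hugoniot system for the standard $1$D shocks (which encodes the usual smooth dependence of intermediate states on the endpoints). By continuity, \eqref{eq:order}, \eqref{eq:sc1}, and the outer entropy conditions \eqref{eq:adml}, \eqref{eq:admr} remain valid throughout a neighbourhood of the base point, so the construction reduces to choosing the four parameters $\varepsilon_1,\varepsilon_2,\beta_1,\beta_2$ so that also \eqref{eq:sc2} and \eqref{eq:admm} hold strictly.

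The main obstacle is exactly this simultaneous verification. Condition \eqref{eq:sc2} requires $|\gamma_i|<\varepsilon_i/2-O(\beta_i^2)$, i.e., a strict inequality between quantities whose leading behaviour under the perturbation scales \emph{linearly} in $\varepsilon_i$; the $\gamma_i$ are forced by \eqref{eq:rhl3}, \eqref{eq:rhm3}, \eqref{eq:rhr3} and their expansion in $(\varepsilon_1,\varepsilon_2,\beta_1,\beta_2)$ has to be computed carefully. The middle entropy condition \eqref{eq:admm} is equally delicate: it is an equality at the base point, so its sign after perturbation is determined by a first-order expansion in the four parameters, and this is the point where the assumption $c_v>\tfrac12$ of Theorem \ref{thm:mainWUE} is expected to enter, via the coefficients appearing in \eqref{eq:rhl4}, \eqref{eq:rhm4}, \eqref{eq:rhr4} and $s(\vr,p)=\log(p^{c_v}/\vr^{c_v+1})$. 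Once a concrete choice of $(\varepsilon_1,\varepsilon_2,\beta_1,\beta_2)$ making both conditions strict is identified, Theorem \ref{thm:condition} upgrades the constructed subsolution into infinitely many admissible weak $2$-fan solutions, and Proposition \ref{prop:mainWUE} follows.
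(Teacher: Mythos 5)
The high-level picture in your proposal is right: you correctly identify the $1$D Riemann solution as the degenerate base point, correctly enumerate which constraints become equalities there, and correctly expect $c_v>\tfrac12$ to play a role in making a strict sign hold after perturbation. However, the central mechanism of your plan --- solving the nine Rankine--Hugoniot equations for the nine remaining unknowns by the implicit function theorem, with $(\varepsilon_1,\varepsilon_2,\beta_1,\beta_2)$ as parameters --- cannot work as stated, because the Jacobian at the base point is singular. Indeed, look at \eqref{eq:rhm4}. At the base point one has $\mu_1=0$, $\beta_1=\beta_2=0$, $C_1=C_2=0$, $\gamma_1=\gamma_2=0$, $p_1=p_2=p_M$, and every partial derivative of \eqref{eq:rhm4} with respect to $\vr_1,\vr_2,p_1,p_2,\gamma_1,\gamma_2,\mu_0,\mu_1,\mu_2$ vanishes (each term contains a factor $\mu_1$, $\beta_i$, $C_i$, or $p_1-p_2$). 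So the corresponding row of the Jacobian is identically zero and the implicit function theorem gives you nothing. A similar degeneracy affects \eqref{eq:rhm1} when $\vr_{M-}=\vr_{M+}$ (case 5 of Table \ref{table}), whose only possibly-nonzero Jacobian entry is $\vr_1-\vr_2$.

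The paper resolves this by making stronger structural choices that you did not use, which remove the degenerate middle-interface constraints rather than trying to perturb through them. Concretely, the paper imposes $p_1=p_2=p_M-\delta$ and $\beta_1=\beta_2=\mu_1(\ep)$, and parametrizes $\vr_1=\vr_{M-}+\ep$, $\vr_2=\vr_{M+}-\ep$, so that there are only two real free parameters $(\ep,\delta)$. With $\beta_1=\beta_2=\mu_1$, equation \eqref{eq:rhm1} is an identity, and the middle admissibility condition \eqref{eq:admm} holds automatically \emph{with equality} for all $(\ep,\delta)$ --- both sides equal $\mu_1(\vr_2s_2-\vr_1s_1)$ --- so your worry about a "delicate first-order expansion" of \eqref{eq:admm} is moot once the right ansatz is chosen. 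The speeds $\mu_0(\ep),\mu_1(\ep),\mu_2(\ep)$ and then $C_i(\ep,\delta),\gamma_i(\ep,\delta)$ are written as explicit closed-form functions (no implicit function theorem), and the remaining Rankine--Hugoniot conditions are shown to hold identically by algebra. The role of $c_v>\tfrac12$ is also in a different place than you expect: it is used to verify the subsolution conditions \eqref{eq:3sc3}--\eqref{eq:3sc4}, via $\tfrac{C_i}{2}+\gamma_i = \tfrac{(2c_v-1)\delta}{\vr_i} + O(\ep)$, not the admissibility conditions. To repair your argument you would need either to adopt the ansatz $p_1=p_2$, $\beta_1=\beta_2=\mu_1$ (after which no implicit function theorem is needed at all), or to find a different reduction that circumvents the singular Jacobian; as written, the plan has a gap that is not merely unfinished computation but a structural obstruction.
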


Proposition \ref{prop:mainWUE} implies Theorem \ref{thm:mainWUE} using the Galilean invariance of the Euler equations. Indeed, if we consider new initial data as follows:
\begin{equation} \label{eq:newinitial}
(\vr^{0,\new},\vv^{0,\new},p^{0,\new}):=(\vr^0,\vv^0 - (0,v_M)^T,p^0).
\end{equation}
the 1D Riemann solution to the problem \eqref{eq:euler}-\eqref{eq:entropy} with \eqref{eq:newinitial} will fulfill $v_M^\new = 0$. We apply Proposition \ref{prop:mainWUE} and find infinitely many admissible weak $2$-fan solutions $(\vr,\vv,p)$ to the Riemann problem with initial data \eqref{eq:newinitial}. Then, again by Galilean invariance, $$\vr(t, \vx-(0,v_M)^Tt),\ \vv(t,\vx-(0,v_M)^Tt) + (0,v_M)^T,\ p(t, \vx -(0,v_
M)^Tt)$$ are admissible weak $2$-fan solutions to the Riemann problem with original initial data \eqref{eq:Riemann}, and the proof of Theorem \ref{thm:mainWUE} is finished.

What remains now is to prove Proposition \ref{prop:mainWUE}.

\subsection{Proof of Proposition \ref{prop:mainWUE}}	

Let the assumptions of Proposition \ref{prop:mainWUE} be true. As we mentioned above, we will look for an admissible $2$-fan subsolution as a suitable perturbation of the 1D Riemann solution. This perturbation will be quantified by a small parameter $\ep > 0$.

For convenience we define functions $A,B,D:\R\rightarrow\R$ by 
\begin{align*} 
A(\ep)&:= \vr_-(\vr_{M-}+\ep)(\vr_{M+}-\ep - \vr_+) - \vr_+(\vr_{M+}-\ep)(\vr_{M-}+\ep - \vr_-); \\
B(\ep)&:= \vr_-\vr_+(\vr_{M-}+\ep)(\vr_{M+}-\ep)\big(v_{-} - v_{+}\big)^2 - (p_- - p_+)\ A(\ep); \\
D(\ep) &:= v_{-} \vr_- (\vr_{M-} + \ep) (\vr_{M+} - \ep - \vr_+) - v_{+} \vr_+ (\vr_{M+}-\ep) (\vr_{M-} + \ep - \vr_-).
\end{align*}

First of all we want to show some properties of the functions $A$ and $B$. It is easy to deduce from Proposition \ref{prop:standard} and the assumption $v_M=0$, that 
\begin{align}
v_- &= \sqrt{2\,c_v}\frac{p_M - p_-}{\sqrt{\vr_- (p_- + (2c_v + 1) p_M)}} ,\label{eq:1} \\
v_+ &= 	-\sqrt{2\,c_v}\frac{p_M - p_+}{\sqrt{\vr_+ (p_+ + (2c_v + 1) p_M)}}. \label{eq:2}
\end{align}
Since $p_M > \max\{p_-,p_+\}$, we have $v_+<0<v_-$. Furthermore we obtain from Proposition \ref{prop:standard} and \eqref{eq:1}, \eqref{eq:2} that 
\begin{align*}
\frac{\vr_{M-} - \vr_-}{\vr_-\vr_{M-}} = \frac{2c_v (p_M - p_-)}{\vr_- (p_- + (2c_v + 1) p_M)} = \frac{\big(v_-\big)^2}{p_M - p_-}, \\
\frac{\vr_{M+} - \vr_+}{\vr_+\vr_{M+}} = \frac{2c_v (p_M - p_+)}{\vr_+ (p_+ + (2c_v + 1) p_M)} = \frac{\big(v_+\big)^2}{p_M - p_+}. 
\end{align*}
This leads to 
\begin{align*}
\big(v_-\big)^2 + (p_- - p_+)\frac{\vr_{M-} - \vr_-}{\vr_-\vr_{M-}} &= \big(v_-\big)^2 \frac{p_M - p_+}{p_M - p_-}, \\ \\
\big(v_+\big)^2 - (p_- - p_+)\frac{\vr_{M+} - \vr_+}{\vr_+\vr_{M+}} &= \big(v_+\big)^2 \frac{p_M - p_-}{p_M - p_+},
\end{align*}
and finally
\begin{align*}
&B(0) = \notag\\
& = \vr_-\vr_+\vr_{M-}\vr_{M+}\big(v_- - v_+\big)^2 - (p_- - p_+)\Big(\vr_-\vr_{M-}\big(\vr_{M+} - \vr_+\big) - \vr_+\vr_{M+}\big(\vr_{M-} - \vr_-\big)\Big) \notag\\
& = \vr_-\vr_+\vr_{M-}\vr_{M+}\bigg[\big(v_-\big)^2 + (p_- - p_+)\frac{\vr_{M-} - \vr_-}{\vr_-\vr_{M-}} + \big(v_+\big)^2 - (p_- - p_+)\frac{\vr_{M+} - \vr_+}{\vr_+\vr_{M+}} -2v_-v_+ \bigg] \notag\\
& = \vr_-\vr_+\vr_{M-}\vr_{M+}\bigg[\big(v_-\big)^2 \frac{p_M - p_+}{p_M - p_-} + \big(v_+\big)^2 \frac{p_M - p_-}{p_M - p_+} -2v_-v_+ \bigg] \quad >\quad 0. 
\end{align*} 
Now, by continuity of the function $B$, there exists an $\ep_{\max,1}>0$ such that $B(\ep)>0$ for all $\ep\in(0,\ep_{\max,1}]$. Because $\vr_{M+}>\vr_+$, there exists $\ep_{\max,2}>0$ such that $\vr_{M+}-\ep-\vr_+ > 0$ for all $\ep\in(0,\ep_{\max,2}]$.

Next we want to show that there is an $\ep_{\max,3}>0$ such that $A(\ep)\neq 0$ for all $\ep\in(0,\ep_{\max,3}]$. To this end, let us first assume, that $A(0)\neq 0$. Then, by continuity of the function $A$, there exists such an $\ep_{\max,3}>0$. Now consider the case where $A(0) = 0$. Then we obtain 
\begin{align*}
A(\ep) &=  \vr_-(\vr_{M-}+\ep)(\vr_{M+}-\ep - \vr_+) - \vr_+(\vr_{M+}-\ep)(\vr_{M-}+\ep - \vr_-) \\
&= \ep^2 (\vr_+ - \vr_-) - \ep \big(2\vr_-\vr_+ + (\vr_- - \vr_+)(\vr_{M-} - \vr_{M+})\big) + \underbrace{A(0)}_{=0} \\
&= \ep \big((\vr_+ - \vr_-)\ep -2\vr_-\vr_+ - (\vr_- - \vr_+)(\vr_{M-} - \vr_{M+})\big),
\end{align*}
which has at most two zeros: If $\vr_- = \vr_+$ then 
\begin{align*}
A(\ep)=0 \qquad \Longleftrightarrow \qquad \ep=0;
\end{align*}
and if $\vr_- \neq \vr_+$ then 
\begin{align*}
A(\ep)=0 \qquad \Longleftrightarrow \qquad \ep=0\ \text{ or }\ \ep=\frac{2\vr_-\vr_+ + (\vr_- - \vr_+)(\vr_{M-} - \vr_{M+})}{\vr_+ - \vr_-}.
\end{align*}
Hence there exists $\ep_{\max,3}>0$ such that $A(\ep)\neq 0$ for all $\ep\in(0,\ep_{\max,3}]$.

We set $\ep_{\max}:=\min\{\ep_{\max,1},\ep_{\max,2},\ep_{\max,3}\}$ and then we have
\begin{equation} \label{eq:28}
\begin{split}
A(\ep) &\neq 0, \qquad\qquad \vr_{M+}-\ep-\vr_+ > 0, \\
B(\ep) &> 0, \qquad\qquad \vr_{M-} + \ep-\vr_- > 0,
\end{split}
\end{equation}
for all $\ep\in(0,\ep_{\max}]$.

\subsubsection{Shock speeds}

Next we define the functions $\mu_0,\mu_1,\mu_2:(0,\ep_{\max}]\rightarrow\R$ by 
\begin{align*} 
\mu_0(\ep)&:=\frac{1}{A(\ep)} \bigg[D(\ep) +\vr_-\vr_+(\vr_{M+} - \ep) (v_- - v_+)  - \sqrt{\big(\vr_{M-} + \ep\big)^2\ \frac{\vr_{M+} - \ep - \vr_+}{\vr_{M-} + \ep-\vr_-}\ B(\ep)} \ \bigg]; \\
\mu_1(\ep)&:=\frac{1}{A(\ep)} \bigg[D(\ep)- \sqrt{(\vr_{M-} + \ep-\vr_-)(\vr_{M+} - \ep - \vr_+)\ B(\ep)}\ \bigg]; \\
\mu_2(\ep)&:= \frac{1}{A(\ep)} \bigg[D(\ep) + \vr_-\vr_+(\vr_{M-} + \ep) (v_- - v_+) - \sqrt{(\vr_{M+} - \ep)^2\ \frac{\vr_{M-} + \ep - \vr_-}{\vr_{M+} - \ep - \vr_+}\ B(\ep)}\ \bigg].
\end{align*}
Note first that the functions $\mu_0,\mu_1,\mu_2$ are well-defined because of \eqref{eq:28}. We claim that these functions define perturbations of the shock speeds $\sigma_-$, $v_M$ and $\sigma_+$ of the 1D Riemann solution. More precisely we have

\begin{propo}
	It holds that 
	\begin{align*}
	\lim\limits_{\ep\to 0}\mu_0(\ep) &= \sigma_-, &
	\lim\limits_{\ep\to 0}\mu_1(\ep) &= v_M, &
	\lim\limits_{\ep\to 0}\mu_2(\ep) &= \sigma_+. 
	\end{align*}
	\label{prop:limittostandard1}
\end{propo}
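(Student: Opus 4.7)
The plan is to substitute $\ep=0$ directly into the formulas for $\mu_0$, $\mu_1$, $\mu_2$ and to use the Rankine--Hugoniot jump conditions from Proposition \ref{prop:standard} (together with the standing assumption $v_M=0$) to reduce each expression to the desired shock speed. It is convenient to write $X := \vr_{M-}-\vr_- > 0$ and $Y := \vr_{M+}-\vr_+ > 0$, and first to record two consequences of Proposition \ref{prop:standard}: the mass-conservation identities $\vr_- v_- = -\sigma_- X$ and $\vr_+ v_+ = -\sigma_+ Y$, and the identities $\sigma_\pm^2\,\vr_{M\pm}(\vr_{M\pm}-\vr_\pm) = \vr_\pm(p_M-p_\pm)$ obtained by combining the mass and momentum Rankine--Hugoniot relations. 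One also notes that $\sigma_-<0<\sigma_+$.

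A short computation using these relations collapses the defining expressions for $B$ and $D$ at $\ep=0$ to
\begin{align*}
D(0) &= XY\,(\sigma_+\vr_{M+}-\sigma_-\vr_{M-}), \\
B(0) &= XY\,(\sigma_+\vr_{M+}-\sigma_-\vr_{M-})^2.
\end{align*}
In particular $\sqrt{XY\,B(0)} = D(0)$, which makes the numerator of $\mu_1$ vanish identically at $\ep=0$. After expansion and using $X = \vr_{M-}-\vr_-$ (respectively $Y = \vr_{M+}-\vr_+$) to cancel one summand apiece, the analogous direct calculation should yield
\[
\mu_0(0)\,A(0) = \sigma_-\,A(0), \qquad \mu_2(0)\,A(0) = \sigma_+\,A(0).
\]
Whenever $A(0)\neq 0$, dividing by $A(0)$ immediately gives the three claimed limits $\sigma_-$, $v_M=0$, $\sigma_+$.

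The degenerate case $A(0)=0$ needs extra care because the above gives a $0/0$ indeterminate form. Here the explicit factorisation
\[
A(\ep) = \ep\,\bigl[(\vr_+-\vr_-)\ep - 2\vr_-\vr_+ - (\vr_--\vr_+)(\vr_{M-}-\vr_{M+})\bigr]
\]
already derived in the preceding portion of the paper shows that $A$ has a simple zero at $\ep=0$, that is $A'(0)\neq 0$. Since $B(0)>0$, the square roots appearing in the numerators of $\mu_0,\mu_1,\mu_2$ are smooth functions of $\ep$ near the origin, and the quantities $F_j(\ep) := \mu_j(\ep)\,A(\ep)$ are smooth as well. Writing $\sigma_1 := v_M = 0$, the previous paragraph says $F_j(0) = \sigma_j A(0) = 0$, so L'H\^opital's rule applies and reduces the claim to the derivative-level identity $F_j'(0) = \sigma_j\,A'(0)$, to be verified by differentiating the polynomial-like expressions for $D$, $B$, and $A$ at $\ep=0$.

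The main obstacle I foresee is bookkeeping rather than substance: to reach the clean forms of $B(0)$ and $D(0)$ above one must rewrite all occurring velocities and pressure differences in terms of $X$, $Y$, $\sigma_\pm$, $\vr_{M\pm}$, and $p_M$ in a consistent way, and the cancellations leading to $\mu_0(0)A(0) = \sigma_-A(0)$ and $\mu_2(0)A(0) = \sigma_+A(0)$ must be tracked through several intermediate expansions. The $A(0)=0$ case then adds one further tier of Taylor expansions at $\ep=0$, but introduces no new conceptual difficulty since $B$, $D$, $A$ are smooth near the origin and the argument of each square root stays uniformly positive there.
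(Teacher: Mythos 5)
Your approach is correct but takes a genuinely different route from the paper. You substitute the Rankine--Hugoniot identities (with $v_M=0$) into $D(0)$ and $B(0)$ directly to get the clean closed forms $D(0)=XY(\sigma_+\vr_{M+}-\sigma_-\vr_{M-})$ and $B(0)=XY(\sigma_+\vr_{M+}-\sigma_-\vr_{M-})^2$, from which $\sqrt{XY\,B(0)}=D(0)$ is immediate (both identities check out). The paper never records these formulas; instead it eliminates $\sigma_\pm,p_M$ from the Rankine--Hugoniot system to obtain the quadratic $A(0)\,v_M^2-2D(0)\,v_M+E=0$ and reads off $\mu_1(0)=v_M$ from its root formula. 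Your route is arguably more transparent for the generic case $A(0)\neq 0$. The bigger divergence is in how $\mu_0$ and $\mu_2$ are handled: the paper never computes them directly, but rewrites
\[
\mu_0(\ep)=v_-+\frac{\vr_{M-}+\ep}{\vr_{M-}+\ep-\vr_-}\bigl(\mu_1(\ep)-v_-\bigr),\qquad
\mu_2(\ep)=v_++\frac{\vr_{M+}-\ep}{\vr_{M+}-\ep-\vr_+}\bigl(\mu_1(\ep)-v_+\bigr),
\]
which are continuous through $\ep=0$ regardless of whether $A(0)$ vanishes, so $\lim\mu_1=v_M$ alone yields $\lim\mu_0=\sigma_-$ and $\lim\mu_2=\sigma_+$ via \eqref{eq:rhstandardl1} and \eqref{eq:rhstandardr1}. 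That trick collapses the entire degenerate case to a single application of L'H\^opital for $\mu_1$.

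By contrast, in the $A(0)=0$ branch your plan requires verifying the derivative identity $F_j'(0)=\sigma_j A'(0)$ for all three $j$, and this is precisely the step you leave as "to be verified." It is not automatic from $F_j(0)=\sigma_j A(0)=0$: the first-order Taylor coefficient has to be computed, and the paper itself calls the analogous verification for $j=1$ "a long but straightforward computation." So your outline is conceptually sound, but the computational burden is threefold where the paper's is onefold, and the key derivative-level step is asserted rather than carried out. I would strongly recommend adopting the $\mu_0$-via-$\mu_1$ reduction above before the $\ep\to 0$ step; with it your nicer formulas for $D(0)$, $B(0)$ combine with a single L'H\^opital application to give a complete proof that is actually slightly cleaner than the paper's.
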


\begin{proof}
	We start with the Rankine-Hugoniot conditions for the 1D Riemann solution 
	\begin{align}
	\sigma_-\,(\vr_- - \vr_{M-}) &= \vr_-\,v_- - \vr_ {M-}\,v_M; \label{eq:rhstandardl1}\\
	\sigma_-\,(\vr_-\,v_- - \vr_{M-}\,v_M) &= \vr_-\,v_-^2 - \vr_{M-}\,v_M^2 + p_- - p_M;\label{eq:rhstandardl2}\\
	\sigma_+\,(\vr_{M+} - \vr_+) &= \vr_ {M+}\,v_M - \vr_+\,v_+; \label{eq:rhstandardr1}\\
	\sigma_+\,(\vr_{M+}\,v_M - \vr_+\,v_+) &= \vr_{M+}\,v_M^2 - \vr_+\,v_+^2 + p_M - p_+ \label{eq:rhstandardr2}
	\end{align}
	and we obtain by eliminating $\sigma_-$, $\sigma_+$ and $p_M$ that
	\begin{align*}
	&(\vr_{M+}-\vr_+) \big(\vr_-\,v_- - \vr_{M-}\,v_M\big)^2 + (\vr_- - \vr_{M-}) \big(\vr_{M+}\,v_M - \vr_+\,v_+\big)^2 \\
	&= \Big(\vr_-\,v_-^2 - \vr_+\,v_+^2 - \big(v_M\big)^2 (\vr_{M-} - \vr_{M+}) + p_- - p_+\Big) (\vr_{M+} - \vr_+) (\vr_- - \vr_{M-}) . 
	\end{align*}
	In order to solve this equation for $v_M$, we write it as follows
	\begin{equation} \label{eq:quadratic_vm2}
	A(0)\ \big(v_M\big)^2 - 2 D(0)\ v_M + E = 0,
	\end{equation}
	where the constant
	\begin{equation*} 
	E := (p_- - p_+)(\vr_{M-} - \vr_-)(\vr_{M+} - \vr_+) + \big(v_-\big)^2 \vr_- \vr_{M-} (\vr_{M+} - \vr_+) - \big(v_+\big)^2 \vr_+ \vr_{M+} (\vr_{M-} - \vr_-) 
	\end{equation*}
	only depends on the initial states.	Now we have to consider two cases, namely $A(0)=0$ and $A(0)\neq 0$.
	
	Let us start with $A(0)=0$. Then we easily deduce that 
	\begin{equation*}
	D(0) = \vr_- \vr_{M-} (\vr_{M+} - \vr_+) \big(v_- - v_+\big),
	\end{equation*}
	which does not vanish because $v_+<0<v_-$ and $\vr_{M+} > \vr_+$. Hence we get from \eqref{eq:quadratic_vm2}, that 
	\begin{equation} \label{eq:4}
	v_M = \frac{E}{2D(0)}.
	\end{equation}
	Next we want to compute $\lim\limits_{\ep \to 0}\mu_1(\ep)$ and compare it with \eqref{eq:4}. Keeping in mind that we are considering the case $A(0)=0$, we get 
	\begin{align*}
	&D(0)- \sqrt{(\vr_{M-} - \vr_-)(\vr_{M+} - \vr_+)\ B(0)} \\
	&=\big(v_- - v_+\big) \vr_- \vr_{M-} (\vr_{M+} - \vr_+) - \sqrt{(\vr_{M-} - \vr_-)(\vr_{M+} - \vr_+)  \vr_- \vr_+ \vr_{M-}\vr_{M+}\big(v_- - v_+\big)^2} \\
	&=\big(v_- - v_+\big) \vr_- \vr_{M-} (\vr_{M+} - \vr_+) - \sqrt{(\vr_{M+} - \vr_+)^2  (\vr_+)^2 (\vr_{M+})^2\big(v_- - v_+\big)^2}\quad =\quad 0.
	\end{align*}
	Hence we can apply L'Hospital's rule. We obtain 
	\begin{align*}
	\lim\limits_{\ep\to 0 } A'(\ep) &= \lim\limits_{\ep\to 0 }\Big(\vr_-(\vr_{M+}-\ep - \vr_+) - \vr_-(\vr_{M-}+\ep) - \vr_+(\vr_{M+}-\ep) + \vr_+(\vr_{M-}+\ep - \vr_-)\Big) \\
	&= -2\vr_-\vr_+ - (\vr_- - \vr_+) (\vr_{M-} - \vr_{M+}).
	\end{align*} 
	A short calculation shows that this is non-zero: From $A(0)=0$ we can deduce that 
	$$
	\frac{\vr_- - \vr_+}{\vr_-\vr_+} = \frac{\vr_{M-} - \vr_{M+}}{\vr_{M-}\vr_{M+}}.
	$$
	This means that $\vr_- - \vr_+$ and $\vr_{M-} - \vr_{M+}$ have the same sign, which implies $(\vr_- - \vr_+)(\vr_{M-} - \vr_{M+})\geq 0$. Since $\vr_-\vr_+ >0$, we have $A'(0)<0$, in particular $A'(0)\neq 0$
	
	Furthermore a long but straightforward computation yields
	\begin{equation*}
	\lim\limits_{\ep\to 0 }\Big[D(\ep)- \sqrt{(\vr_{M-} + \ep-\vr_-)(\vr_{M+} - \ep - \vr_+)\ B(\ep)}\ \Big]' = \frac{A'(0)\ E}{2D(0)}.
	\end{equation*} 
	Hence by L'Hospital's rule we obtain $\lim\limits_{\ep\to 0 }\mu_1(\ep)= \frac{E}{2D(0)}$ and recalling \eqref{eq:4}, we deduce $\lim\limits_{\ep\to 0 }\mu_1(\ep) = v_M$.
	
	Let us now consider the case $A(0)\neq 0$. Then we obtain from \eqref{eq:quadratic_vm2} that 
	\begin{align*}
	v_M &= \frac{1}{A(0)}\Big[D(0) \pm \sqrt{D(0)^2 - A(0)\ E\ }\ \Big].
	\end{align*}
	The correct sign in the equation above is ``$-$'' because\footnote{Alternatively, this can be verified by considering the admissibility criterion.} $v_M=0$ and $D(0)>0$, which easily follows from $v_+<0<v_-$.
	Furthermore it is simple to check that $$D(0)^2 - A(0)\ E=(\vr_{M-} - \vr_-)(\vr_{M+} - \vr_+)\ B(0).$$ 
	Then it is easy to conclude $\mu_1(0) = v_M$. 
	
	To finish the proof of Proposition \ref{prop:limittostandard1} we have to show that $\lim\limits_{\ep\to 0 } \mu_1(\ep)=v_M$ implies that $\lim\limits_{\ep\to 0 } \mu_0(\ep)=\sigma_-$ and $\lim\limits_{\ep\to 0 } \mu_2(\ep)=\sigma_+$. It is straightforward to deduce that 
	\begin{align*}
	\mu_0(\ep)&=v_- + \frac{\vr_{M-} + \ep}{\vr_{M-} + \ep - \vr_-} \big(\mu_1(\ep) - v_-\big); \\
	\mu_2(\ep)&=v_+ + \frac{\vr_{M+} - \ep}{\vr_{M+} - \ep - \vr_+} \big(\mu_1(\ep) - v_+\big). 
	\end{align*}
	On the other hand we get from \eqref{eq:rhstandardl1} and \eqref{eq:rhstandardr1}, that
	\begin{align*}
	\sigma_-&=v_- + \frac{\vr_{M-}}{\vr_{M-} - \vr_-} \big(v_M - v_-\big); &
	\sigma_+&=v_+ + \frac{\vr_{M+}}{\vr_{M+} - \vr_+} \big(v_M - v_+\big).
	\end{align*}
	Hence we easily deduce $\lim\limits_{\ep\to 0 } \mu_0(\ep)=\sigma_-$ and $\lim\limits_{\ep\to 0 } \mu_2(\ep)=\sigma_+$.
\end{proof} 

Because of $\sigma_- < v_M < \sigma_+$ and the continuity of the functions $\mu_0,\mu_1,\mu_2$, we may assume that $\mu_0(\ep) < \mu_1(\ep) < \mu_2(\ep)$ for all $\ep \in(0,\ep_{\max}]$. If this is not the case we redefine $\ep_{\max}$ to be a bit smaller that the smallest positive value of $\ep$ for which $\mu_0(\ep)<\mu_1(\ep)<\mu_2(\ep)$ is violated. 

\subsubsection{Constants $C_i$ and $\gamma_i$}

In order to proceed further we need to introduce a second positive parameter $\delta > 0$. We define the functions $C_1,C_2,\gamma_1,\gamma_2:(0,\ep_{\max}]\times(0,p_M)$ by
\begin{align*} 
C_1(\ep,\delta)&:= \frac{2}{(\vr_{M-} + \ep)\big(\mu_0(\ep) - \mu_1(\ep)\big)} \bigg[-\mu_0(\ep)\Big(c_v(p_M - \delta - p_-)-\half\vr_-|v_-|^2\Big) \\ 
&\qquad\qquad\qquad +\mu_1(\ep)(c_v+1)(p_M - \delta) - \Big(\half\vr_- |v_-|^2 + (c_v+1)p_-\Big) v_-\bigg]; \\
C_2(\ep,\delta)&:= \frac{2}{(\vr_{M+} - \ep)\big(\mu_2(\ep) - \mu_1(\ep)\big)}\bigg[-\mu_2(\ep)\Big(c_v(p_M - \delta - p_+)-\half\vr_+|v_+|^2\Big) \\ 
&\qquad\qquad\qquad +\mu_1(\ep)(c_v+1)(p_M - \delta) - \Big(\half\vr_+ |v_+|^2 + (c_v+1)p_+\Big)v_+\bigg]; \\
\gamma_1(\ep,\delta)&:= \frac{1}{(\vr_{M-} + \ep)}\bigg[(\vr_{M-} + \ep)\frac{C_1(\ep,\delta)}{2} - \vr_- v_-^2 + p_M - \delta - p_- - \mu_0(\ep) \big((\vr_{M-} + \ep)\mu_1(\ep) - \vr_- v_-\big)\bigg];\\
\gamma_2(\ep,\delta)&:= \frac{1}{(\vr_{M+} - \ep)}\bigg[(\vr_{M+} - \ep)\frac{C_2(\ep,\delta)}{2} - \vr_+ v_+^2 + p_M - \delta - p_+ - \mu_2(\ep) \big((\vr_{M+} - \ep)\mu_1(\ep) - \vr_+ v_+\big)\bigg].
\end{align*}
Note that these functions are well-defined because of the arguments above. More precisely, it holds that $\mu_0(\ep)-\mu_1(\ep)\neq 0$ and $\mu_2(\ep)-\mu_1(\ep)\neq 0$ for all $\ep \in(0,\ep_{\max}]$.

\begin{propo}
	It holds that 
	\begin{align*} 
	\lim\limits_{(\ep,\delta)\to 0} C_1(\ep,\delta) &= \big(v_M\big)^2; & \lim\limits_{(\ep,\delta)\to 0} \gamma_1(\ep,\delta) &= -\frac{\big(v_M\big)^2}{2}; \\
	\lim\limits_{(\ep,\delta)\to 0} C_2(\ep,\delta) &= \big(v_M\big)^2; & \lim\limits_{(\ep,\delta)\to 0} \gamma_2(\ep,\delta) &= -\frac{\big(v_M\big)^2}{2}.
	\end{align*}
	\label{prop:limittostandard2}
\end{propo}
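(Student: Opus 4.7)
Both $C_i$ and $\gamma_i$ are defined by explicit rational expressions in $(\ep,\delta)$, so it suffices to pass to the limit $(\ep,\delta)\to(0,0)$ by continuity, as soon as one checks that the denominators stay away from zero. By Proposition \ref{prop:limittostandard1} combined with the strict ordering $\sigma_-<v_M<\sigma_+$ from Proposition \ref{prop:standard}, the factors $\mu_0(\ep)-\mu_1(\ep)$ and $\mu_2(\ep)-\mu_1(\ep)$ converge to the nonzero limits $\sigma_--v_M$ and $\sigma_+-v_M$, so all the denominators remain nonzero in the limit. The proposition then reduces to verifying four algebraic identities, each of which follows from a Rankine--Hugoniot condition of the 1D Riemann solution.

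For $\lim C_1$, I substitute $\mu_0(0)=\sigma_-$, $\mu_1(0)=v_M$, and $\ep=\delta=0$ in the defining formula of $C_1$ to obtain
\[
\frac{2}{\vr_{M-}(\sigma_--v_M)}\Bigl[-\sigma_-\bigl(c_v(p_M-p_-)-\half\vr_-v_-^2\bigr)+v_M(c_v+1)p_M-\bigl(\half\vr_-v_-^2+(c_v+1)p_-\bigr)v_-\Bigr].
\]
The crucial input is the energy Rankine--Hugoniot condition on the left $1$-shock of the 1D Riemann solution,
\[
\sigma_-\bigl(\half\vr_-v_-^2+c_vp_--\half\vr_{M-}v_M^2-c_vp_M\bigr) = \bigl(\half\vr_-v_-^2+(c_v+1)p_-\bigr)v_--\bigl(\half\vr_{M-}v_M^2+(c_v+1)p_M\bigr)v_M,
\]
which holds because the 1D Riemann solution satisfies the full Euler system in the weak sense. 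Using it to eliminate $\sigma_-\bigl(c_v(p_M-p_-)-\half\vr_-v_-^2\bigr)$ in the bracket cancels every $c_v$- and pressure-term and reduces the bracket to $\half\vr_{M-}v_M^2(\sigma_--v_M)$, so $\lim C_1 = v_M^2$. The limit $\lim C_2 = v_M^2$ is obtained in exactly the same way using the energy Rankine--Hugoniot condition on the right $3$-shock.

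For $\lim\gamma_1$, I feed the just-established $\lim C_1 = v_M^2$ into the defining formula of $\gamma_1$ and pass to the limit, getting
\[
\frac{1}{\vr_{M-}}\Bigl[\vr_{M-}\,\frac{v_M^2}{2}-\vr_-v_-^2+p_M-p_--\sigma_-\bigl(\vr_{M-}v_M-\vr_-v_-\bigr)\Bigr].
\]
The momentum Rankine--Hugoniot condition \eqref{eq:rhstandardl2} yields $\sigma_-(\vr_{M-}v_M-\vr_-v_-)=\vr_{M-}v_M^2-\vr_-v_-^2+p_M-p_-$, and substituting cancels all the momentum and pressure terms, leaving $-v_M^2/2$. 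The case of $\gamma_2$ is identical using \eqref{eq:rhstandardr2}.

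I do not expect any real obstacle here: the entire proposition is continuity plus algebra, and the algebra is governed by the Rankine--Hugoniot conditions of the 1D solution. The only item to watch is to group terms so that those conditions apply in one stroke instead of fragment-by-fragment; in particular, the energy Rankine--Hugoniot condition is never used in the derivation of Proposition \ref{prop:limittostandard1}, so invoking it here is exactly what allows the $c_v$-terms in the definition of $C_i$ to collapse.
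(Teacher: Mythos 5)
Your proof is correct and takes essentially the same route as the paper: the paper's proof (which is quite terse) also cites Proposition \ref{prop:limittostandard1} together with the energy Rankine--Hugoniot conditions \eqref{eq:rhstandardl3}, \eqref{eq:rhstandardr3} for the $C_i$ limits and the momentum conditions \eqref{eq:rhstandardl2}, \eqref{eq:rhstandardr2} for the $\gamma_i$ limits. You have simply carried out the algebra that the paper leaves implicit, and your cancellation of the bracket to $\tfrac12\vr_{M-}v_M^2(\sigma_--v_M)$ (and the analogous reduction for $\gamma_1$) checks out.
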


\begin{proof}
	To prove this, we need the Rankine Hugoniot conditions of the 1D Riemann solution in the energy equation
	\begin{align}
	\begin{split} \label{eq:rhstandardl3}
	&\sigma_-\,\bigg(\half\vr_-\,|v_-|^2 + c_v\,p_- - \half\vr_{M-}\,|v_M|^2 - c_v\,p_M\bigg) \\
	&\quad= \bigg(\half\vr_-\,|v_-|^2 + (c_v+1)\,p_-\bigg)\,v_- - \bigg(\half\vr_{M-}\,|v_M|^2 + (c_v+1)\,p_M\bigg)\,v_M;
	\end{split} \\
	\begin{split} \label{eq:rhstandardr3}
	&\sigma_+\,\bigg(\half\vr_{M+}\,|v_M|^2 + c_v\,p_M - \half\vr_+\,|v_+|^2 - c_v\,p_+ \bigg) \\
	&\quad= \bigg(\half\vr_{M+}\,|v_M|^2 + (c_v+1)\,p_M\bigg)\,v_M - \bigg(\half\vr_+\,|v_+|^2 + (c_v+1)\,p_+\bigg)\,v_+.
	\end{split}
	\end{align}
	
	We obtain that $\lim\limits_{(\ep,\delta)\to 0} C_1(\ep,\delta) = \big(v_M\big)^2$ and $\lim\limits_{(\ep,\delta)\to 0} C_2(\ep,\delta) = \big(v_M\big)^2$ by using Proposition \ref{prop:limittostandard1} and \eqref{eq:rhstandardl3}, \eqref{eq:rhstandardr3}.
	
	The fact that $\lim\limits_{(\ep,\delta)\to 0} \gamma_1(\ep,\delta) = -\frac{(v_M)^2}{2}$ and $\lim\limits_{(\ep,\delta)\to 0} \gamma_2(\ep,\delta) = -\frac{(v_M)^2}{2}$ can be shown analogously by using the Rankine Hugoniot conditions \eqref{eq:rhstandardl2}, \eqref{eq:rhstandardr2}.
\end{proof}

We continue the proof of the Proposition \ref{prop:mainWUE} by observing that the perturbations defined above indeed help to define an admissible $2$-fan subsolution.

\begin{propo} 
	If there exists $(\ep,\delta)\in (0,\ep_{\max}]\times(0,p_M)$ such that the following inequalities are fulfilled, then there exists an admissible $2$-fan subsolution to the Cauchy problem \eqref{eq:euler}-\eqref{eq:Riemann}.
	\begin{itemize}
		\item Order of the speeds:
		\begin{equation} 
		\mu_0(\ep)<\mu_1(\ep)<\mu_2(\ep)
		\label{eq:3order}
		\end{equation} 
		
		\item Subsolution conditions:
		\begin{align}
		C_1(\ep,\delta) - \mu_1(\ep)^2 &> 0 \label{eq:3sc1}\\
		C_2(\ep,\delta) - \mu_1(\ep)^2 &> 0 \label{eq:3sc2}\\
		\bigg(\frac{C_1(\ep,\delta)}{2} + \gamma_1(\ep,\delta)\bigg) \bigg(\frac{C_1(\ep,\delta)}{2} - \mu_1(\ep)^2 - \gamma_1(\ep,\delta)\bigg)  &> 0 \label{eq:3sc3} \\
		\bigg(\frac{C_2(\ep,\delta)}{2} + \gamma_2(\ep,\delta)\bigg) \bigg(\frac{C_2(\ep,\delta)}{2} - \mu_1(\ep)^2 - \gamma_2(\ep,\delta)\bigg)  &> 0 \label{eq:3sc4}
		\end{align}
		
		\item Admissibility condition on the left interface:
		\begin{equation}
		\begin{split}
		&\mu_0(\ep)\,\Big((\vr_{M-}+\ep)\,s(\vr_{M-}+\ep,p_M-\delta) - \vr_-\,s(\vr_-,p_-)\Big)\\
		&\quad\leq (\vr_{M-}+\ep)\,s(\vr_{M-}+\ep,p_M-\delta)\,\mu_1(\ep) - \vr_-\,s(\vr_-,p_-)\,v_-
		\end{split}
		\label{eq:3adml}
		\end{equation}
		
		\item Admissibility condition on the right interface: 
		\begin{equation}
		\begin{split}
		&\mu_2(\ep)\,\Big(\vr_+\,s(\vr_+,p_+) - (\vr_{M+}-\ep)\,s(\vr_{M+}-\ep,p_M-\delta)\Big) \\
		&\quad\leq \vr_+\,s(\vr_+,p_+)\,v_+ - (\vr_{M+}-\ep)\,s(\vr_{M+}-\ep,p_M-\delta)\,\mu_1(\ep)
		\end{split}
		\label{eq:3admr}
		\end{equation}
	\end{itemize}
	\label{prop:3algequa}
\end{propo}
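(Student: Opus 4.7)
The plan is to construct an admissible $2$-fan subsolution explicitly from the constants defined above and then check each item in Proposition \ref{prop:algequa}. Concretely, for any pair $(\ep,\delta)$ satisfying the inequalities in the statement, I would take
\begin{equation*}
\vr_1 := \vr_{M-}+\ep,\qquad \vr_2 := \vr_{M+}-\ep,\qquad p_1 = p_2 := p_M-\delta,
\end{equation*}
together with the ansatz $\alpha_1=\alpha_2=\delta_1=\delta_2:=0$, $\beta_1 = \beta_2:=\mu_1(\ep)$, and with $\mu_0(\ep)$, $\mu_2(\ep)$, $C_i(\ep,\delta)$, $\gamma_i(\ep,\delta)$ the quantities introduced above. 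Informally, this is a perturbation of the 1D Riemann solution in which the densities of the two middle strips are pushed away from the 1D intermediate values by $\pm\ep$, the common pressure is lowered by $\delta$, and the vertical velocity in the middle is replaced by the perturbed contact speed $\mu_1(\ep)$.

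Most Rankine--Hugoniot relations of Proposition \ref{prop:algequa} are then immediate. Equations \eqref{eq:rhl2}, \eqref{eq:rhr2}, \eqref{eq:rhm2} follow from \eqref{eq:choice}; \eqref{eq:rhm1} and \eqref{eq:rhm4} follow from $\beta_1 = \beta_2$ and $p_1 = p_2$. On the left interface, \eqref{eq:rhl1} is the rearrangement $\mu_0 = (\vr_-v_- - \vr_1\mu_1)/(\vr_- - \vr_1)$ which is equivalent to the explicit formula for $\mu_0(\ep)$ (compare with the identities for $\mu_0(\ep)$ derived inside the proof of Proposition \ref{prop:limittostandard1}), while \eqref{eq:rhl3} and \eqref{eq:rhl4} are exactly the equations that the definitions of $\gamma_1(\ep,\delta)$ and $C_1(\ep,\delta)$ solve. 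Symmetrically on the right. Hence the only non-trivial identity is the middle-momentum relation \eqref{eq:rhm3}, which under the ansatz reduces to
\begin{equation*}
\mu_1(\ep)^2(\vr_1-\vr_2) \;=\; \vr_1\Big(\tfrac{C_1}{2}-\gamma_1\Big) - \vr_2\Big(\tfrac{C_2}{2}-\gamma_2\Big).
\end{equation*}
Using \eqref{eq:rhl1}, \eqref{eq:rhl3} one eliminates $\mu_0$ and $\gamma_1$ to obtain $\vr_1(C_1/2-\gamma_1) = \vr_-v_-^2 + p_- - p_1 + (\vr_1\mu_1 - \vr_- v_-)^2/(\vr_1-\vr_-)$; a mirror computation eliminates $\mu_2, \gamma_2$. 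After using $p_1 = p_2$, equation \eqref{eq:rhm3} becomes a quadratic in $\mu_1$ of the form $A(\ep)\mu_1^2 - 2D(\ep)\mu_1 + E(\ep) = 0$, and a routine algebraic identity (the perturbed analogue of the one in the proof of Proposition \ref{prop:limittostandard1}) shows $D(\ep)^2 - A(\ep)E(\ep) = (\vr_{M-}+\ep-\vr_-)(\vr_{M+}-\ep-\vr_+)\,B(\ep)$. By construction, $\mu_1(\ep)$ is precisely the root associated with the ``$-$'' sign in the quadratic formula, so \eqref{eq:rhm3} holds.

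It remains to translate the inequalities. Under the ansatz, the subsolution conditions \eqref{eq:sc1}--\eqref{eq:sc2} reduce to $C_i - \mu_1(\ep)^2>0$ and $(C_i/2+\gamma_i)(C_i/2-\mu_1(\ep)^2-\gamma_i)>0$, which are \eqref{eq:3sc1}--\eqref{eq:3sc4}; the admissibility conditions \eqref{eq:adml} and \eqref{eq:admr} become \eqref{eq:3adml} and \eqref{eq:3admr}; the middle admissibility \eqref{eq:admm} is an equality since $\beta_1 = \beta_2 = \mu_1(\ep)$; and the ordering \eqref{eq:order} coincides with \eqref{eq:3order}. This exhausts the list in Proposition \ref{prop:algequa}, proving the existence of the desired admissible $2$-fan subsolution. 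The single delicate point is the middle-momentum check: one must realize that the quadratic for $\mu_1(\ep)$ has been chosen precisely so that this equation, which was not used in defining $\mu_0, \mu_2, \gamma_i, C_i$, is nonetheless satisfied automatically. Everything else is routine bookkeeping.
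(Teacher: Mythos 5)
Your proposal is correct and takes essentially the same approach as the paper: the paper's proof consists of exactly the same choice of constants (the same ansatz $\vr_1 = \vr_{M-}+\ep$, $\vr_2 = \vr_{M+}-\ep$, $p_1=p_2=p_M-\delta$, $\vv_1=\vv_2=(0,\mu_1(\ep))^T$, $U_i$ diagonal with entries $\pm\gamma_i$, etc.) followed by the one-line remark that verifying \eqref{eq:order}--\eqref{eq:admr} is then a ``matter of straightforward calculation.'' You fill in what that calculation consists of, and your diagnosis that everything is definitional except the middle-momentum relation \eqref{eq:rhm3}, which holds because $\mu_1(\ep)$ was chosen as a root of the associated quadratic with discriminant $(\vr_{M-}+\ep-\vr_-)(\vr_{M+}-\ep-\vr_+)B(\ep)$, is the right way to see it.
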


\begin{proof}
	Let there be $(\ep,\delta)\in (0,\ep_{\max}]\times(0,p_M)$ such that \eqref{eq:3order}-\eqref{eq:3admr} hold. In order to show that there exists an admissible $2$-fan subsolution, we use Proposition \ref{prop:algequa}, i.e. we define the constants appearing in Proposition \ref{prop:algequa} as follows:
	\begin{align*}
	\mu_0 &:= \mu_0(\ep); &
	\mu_1 &:= \mu_1(\ep); &
	\mu_2 &:= \mu_2(\ep); 
	\end{align*}	
	\begin{align*}
	\vr_1 &:= \vr_{M-} + \ep; &
	\vr_2 &:= \vr_{M+} - \ep; \\
	\vv_1 &:=\vv_2 := \left(\begin{array}{c} 0\\ \mu_1(\ep) \end{array}\right); & p_1 &:= p_2 := p_M - \delta; \\
	U_1 &:= \left(\begin{array}{rr} \gamma_1(\ep,\delta) & 0\\ 0 & -\gamma_1(\ep,\delta) \end{array}\right); &
	U_2 &:= \left(\begin{array}{rr} \gamma_2(\ep,\delta) & 0\\ 0 & -\gamma_2(\ep,\delta) \end{array}\right); \\
	C_1 &:= C_1(\ep,\delta); &
	C_2 &:= C_2(\ep,\delta).
	\end{align*}
	It is a matter of straightforward calculation to check that with this choice \eqref{eq:order}-\eqref{eq:admr} hold. 
\end{proof}

\subsubsection{Subsolution and admissibility inequalities}

In order to finish the proof of Proposition \ref{prop:mainWUE}, we have to find $(\ep,\delta)\in (0,\ep_{\max}]\times(0,p_M)$ such that the conditions \eqref{eq:3order}-\eqref{eq:3admr} are satisfied. 

We start with noting that we already have \eqref{eq:3order} fulfilled for all $\ep\in (0,\ep_{\max}]$.

Let us now investigate the subsolution conditions \eqref{eq:3sc1}-\eqref{eq:3sc4}. We start with the terms in the first parenthesis in \eqref{eq:3sc3}-\eqref{eq:3sc4}. We obtain by using that $\delta\in(0,p_M)$
\begin{align}
\frac{C_1(\ep,\delta)}{2} + \gamma_1(\ep,\delta) &= \frac{(2c_v-1)\,\delta}{\vr_{M-} + \ep} - \frac{2\mu_1 (\ep)\, \delta}{(\vr_{M-} + \ep)\big(\mu_0(\ep)-\mu_1(\ep)\big)} + \frac{C_1(\ep,0)}{2} + \gamma_1(\ep,0) \notag\\ 
&\geq \frac{(2c_v-1)\,\delta}{\vr_{M-} + \ep} \underbrace{- \frac{2|\mu_1 (\ep)|\, p_M}{(\vr_{M-} + \ep)\ \big|\mu_0(\ep)-\mu_1(\ep)\big|} + \frac{C_1(\ep,0)}{2} + \gamma_1(\ep,0)}_{=:R_1(\ep)} \\ 
\frac{C_2(\ep,\delta)}{2} + \gamma_2(\ep,\delta) &= \frac{(2c_v-1)\,\delta}{\vr_{M+} - \ep} - \frac{2\mu_1 (\ep)\, \delta}{(\vr_{M+} - \ep)\big(\mu_2(\ep)-\mu_1(\ep)\big)} + \frac{C_2(\ep,0)}{2} + \gamma_2(\ep,0) \notag\\ 
&\geq \frac{(2c_v-1)\,\delta}{\vr_{M+} - \ep} \underbrace{- \frac{2|\mu_1 (\ep)|\, p_M}{(\vr_{M+} - \ep)\ \big|\mu_2(\ep)-\mu_1(\ep)\big|} + \frac{C_2(\ep,0)}{2} + \gamma_2(\ep,0)}_{=:R_2(\ep)}
\end{align}	
where Propositions \ref{prop:limittostandard1} and \ref{prop:limittostandard2} together with the fact that $v_M=0$ imply that 
\begin{align*}
\lim\limits_{\ep\to 0 } R_1(\ep) &= 0; & \lim\limits_{\ep\to 0 } R_2(\ep) &= 0.
\end{align*}
Therefore $|R_1(\ep)|$ and $|R_2(\ep)|$ become arbitrary small if we choose $\ep$ small. Because of $c_v>\half$, there exists $\til{\ep}_1(\delta)\in(0,\ep_{\max}]$ for each $\delta\in(0,p_M)$, such that 
\begin{equation}\label{eq:3T1}
\frac{C_1(\ep,\delta)}{2} + \gamma_1(\ep,\delta) > 0 \qquad \& \qquad \frac{C_2(\ep,\delta)}{2} + \gamma_2(\ep,\delta) > 0
\end{equation}
hold for all $\ep\in(0,\til{\ep}_1(\delta))$. 

Similarly we handle terms in the second parenthesis in inequalities \eqref{eq:3sc3}-\eqref{eq:3sc4}. We obtain
\begin{align}
\frac{C_1(\ep,\delta)}{2} - \mu_1(\ep)^2 - \gamma_1(\ep,\delta) &= \frac{\delta}{\vr_{M-}+\ep} + \underbrace{\frac{C_1(\ep,0)}{2} - \mu_1(\ep)^2 - \gamma_1(\ep,0)}_{=:R_3(\ep)} \\
\frac{C_2(\ep,\delta)}{2} - \mu_1(\ep)^2 - \gamma_2(\ep,\delta) &= \frac{\delta}{\vr_{M+} - \ep} + \underbrace{\frac{C_2(\ep,0)}{2} - \mu_1(\ep)^2 - \gamma_2(\ep,0)}_{=:R_4(\ep)}.
\end{align}
With the same arguments as above, we obtain that for each $\delta \in(0,p_M)$ there exists $\til{\ep}_2(\delta)\in (0,\ep_{\max}]$ such that
\begin{equation}\label{eq:3T2}
\frac{C_i(\ep,\delta)}{2} - \mu_1(\ep)^2 - \gamma_i(\ep,\delta) > 0, \qquad i = 1,2
\end{equation}
hold for all $\ep\in(0,\til{\ep}_2(\delta))$. 

Combining \eqref{eq:3T1} and \eqref{eq:3T2} we obtain \eqref{eq:3sc3} and \eqref{eq:3sc4} while summing together \eqref{eq:3T1} and \eqref{eq:3T2} we obtain \eqref{eq:3sc1} and \eqref{eq:3sc2}.

To finish the proof we have to show that we can achieve that in addition the admissibility conditions \eqref{eq:3adml} and \eqref{eq:3admr} hold. Note that in the limit $(\ep,\delta)\to (0,0)$ the admissibility conditions \eqref{eq:3adml} and \eqref{eq:3admr} turn into the admissibility conditions of the 1D Riemann solution (according to Proposition \ref{prop:limittostandard1}). Since the latter are fulfilled strictly, we can choose $\delta>0$ sufficiently small and also $\ep\in(0,\min\{\til{\ep_1}(\delta),\til{\ep_2}(\delta)\})$ sufficiently small such that \eqref{eq:3adml} and \eqref{eq:3admr} hold. This finishes the proof of Proposition \ref{prop:mainWUE}.

\section{Proof of Theorem \ref{t:main}}\label{s:proofA}

Unlike in the case of $2$-fan solutions as in Theorem \ref{thm:mainWUE}, we cannot use here the 1D Riemann solution and try to perturb it in a suitable way in order to find a $1$-fan subsolution. On the other hand, since we don't have the middle interface, the number of equations and inequalities in Proposition \ref{prop:algequa} is lower than in the case $N = 2$.

Since $i$ could be only equal to $1$ in Proposition \ref{prop:algequa}, for simplicity of notation we write $\alpha$, $\beta$, $\gamma$, $\delta$ and $C$ instead of $\alpha_1$, $\beta_1$, $\gamma_1$, $\delta_1$ and $C_1$. We also recall that we set $\alpha = 0$ and $\delta = \alpha\beta = 0$, see \eqref{eq:choice}.

The subsolution conditions \eqref{eq:sc1}-\eqref{eq:sc2} then simplify to 
\begin{align}
C - \beta^2 &> 0 \label{eq:scs1};\\
\bigg(\frac{C}{2} + \gamma\bigg) \bigg(\frac{C}{2} - \beta^2 - \gamma\bigg)  &> 0. \label{eq:scs2} 
\end{align}
It is not difficult to observe (see also \cite[Lemma 4.3]{ChiKre14}) that the necessary condition for \eqref{eq:scs1}-\eqref{eq:scs2} to be satisfied is $\frac{C}{2} - \gamma> \beta^2$ which motivates us to introduce instead of $C$ and $\gamma$ new unknowns
\begin{align}
\ep_1 &:= \frac{C}{2} - \gamma - \beta^2; \\
\ep_2 &:= C - \beta^2 - \ep_1.
\end{align}

The set of algebraic identities and inequalities from Proposition \ref{prop:algequa} then simplifies into
\begin{itemize}
	\item Order of the speeds
	\begin{equation}
	\mu_0 < \mu_1
	\end{equation}
	\item Rankine Hugoniot conditions on the left interface:
	\begin{align}
	&\mu_0 (\vr_- - \vr_1) \, =\,  \vr_- v_- -\vr_1  \beta \label{eq:cont_left2}  \\
	&\mu_0 (\vr_- v_- - \vr_1 \beta) \, = \,  
	\vr_- v_-^2 - \vr_1(\beta^2 + \ep_1) + p_- - p_1 \label{eq:mom_2_left2} \\
	& \mu_0\left(2c_v p_- + \vr_- v_-^2 - 2c_v p_1 - \vr_1(\beta^2+\ep_1+\ep_2)\right)\nonumber\\
	&\qquad =  \big((2c_v+2) p_- + \vr_- v_-^2\big) v_- - \big((2c_v+2) p_1 + \vr_1(\beta^2+\ep_1+\ep_2)\big) \beta\, ; \label{eq:E_left2}
	\end{align}
	\item Rankine-Hugoniot conditions on the right interface:
	\begin{align}
	&\mu_1 (\vr_1-\vr_+ ) \, =\,  \vr_1  \beta - \vr_+ v_+ \label{eq:cont_right2}\\
	&\mu_1 (\vr_1 \beta- \vr_+ v_+) \, = \, \vr_1(\beta^2+\ep_1) - \vr_+ v_+^2 + p_1 - p_+ \label{eq:mom_2_right2} \\
	&\mu_1\left( 2c_vp_1 + \vr_1(\beta^2+\ep_1+\ep_2) - 2c_v p_+ - \vr_+ v_+^2 \right)\nonumber\\
	& \qquad =  \big((2c_v+2) p_1 + \vr_1(\beta^2+\ep_1+\ep_2)\big) \beta - \big((2c_v+2) p_+ + \vr_+ v_+^2\big) v_+\, ; \label{eq:E_right2}
	\end{align}
	\item Subsolution conditions:
	\begin{align}
	&\ep_1 > 0 \label{eq:sub_trace2}\\
	&\ep_2 > 0\, ;\label{eq:sub_det2}
	\end{align}
	\item Admissibility conditions
	\begin{align}
	&\mu_0(L_- - L_1) \geq L_-v_{-} - L_1\beta\label{eq:adm_left2} \\ 
	&\mu_1(L_1 - L_+) \geq L_1\beta - L_+v_{+}\, . \label{eq:adm_right2}
	\end{align}
\end{itemize}
where for simplicity of notation we introduced $L_i$ as
$$
L_i = \vr_is(\vr_i,p_i) = \vr_i \log \left(\frac{p_i^{c_v}}{\vr_i^{c_v+1}}\right), \qquad i=-,1,+.
$$
We observe that we have 6 equations and 5 inequalities for 7 unknowns $\mu_0,\mu_1,\vr_1,p_1,\beta,\ep_1,\ep_2$. Moreover to these 5 inequalities we have to keep in mind other inequalities which have to be satisfied, namely $\vr_1 > 0$ and $p_1 > 0$.

We define the following quantities, functions of the Riemann initial data
\begin{align}
R &= \vr_- - \vr_+ \\
A &= \vr_-v_- - \vr_+v_+ \\
H &= \vr_-v_-^2 - \vr_+v_+^2 + p_- - p_+ \\
u &= v_- - v_+ \\
B &= A^2 - RH = \vr_-\vr_+ u^2 - (\vr_+ - \vr_-)(p_+ - p_-).
\end{align}

Since we have six equations for seven unknowns it is reasonable to choose one unknown as a parameter and express other unknowns as functions of Riemann initial data and this parameter. We choose the density in the middle region $\vr_1$ as a parameter. Note that we may assume $B > 0$, this can be achieved by taking $u$ large enough as in the assumptions of Theorem \ref{t:main}.

\subsection{The case $R < 0$}

\subsubsection{Solution to algebraic equations}

Recalling that $R < 0$ means $\vr_- < \vr_+$ we express $\mu_0$ and $\mu_1$ from equations \eqref{eq:cont_left2}, \eqref{eq:mom_2_left2}, \eqref{eq:cont_right2}, \eqref{eq:mom_2_right2} as 
\begin{align}
\mu_0 = \frac{A}{R} - \frac{1}{R}\sqrt{B\frac{\vr_1-\vr_+}{\vr_1-\vr_-}} \label{eq:num}\\
\mu_1 = \frac{A}{R} - \frac{1}{R}\sqrt{B\frac{\vr_1-\vr_-}{\vr_1-\vr_+}} \label{eq:nup}
\end{align}
and we observe that $\mu_0 < \mu_1$ if $\vr_1 > \vr_+ > \vr_-$. In what follows we therefore always assume that the set of possible values of $\vr_1$ is $\vr_1 \in (\vr_+,\infty)$. There exists also another solution $\mu_0,\mu_1$, which we do not use, see Remark \ref{r:mu} at the end of section \ref{sss:AI}.

Next we express $\beta$ in two ways which will be useful later. Using \eqref{eq:cont_right2} and \eqref{eq:cont_left2} respectively we obtain
\begin{align}
\beta &= \frac{\vr_-}{\vr_1}v_- + \mu_0\frac{\vr_1-\vr_-}{\vr_1} \label{eq:beta1};\\
\beta &= \frac{\vr_+}{\vr_1}v_+ + \mu_1\frac{\vr_1-\vr_+}{\vr_1} \label{eq:beta2}.
\end{align}
It is technically more difficult to express $p_1,\ep_1$ and $\ep_2$. We start with rewriting \eqref{eq:mom_2_right2} as
\begin{equation}\label{eq:A}
p_1 = p_+ + \vr_+v_+^2 - \vr_1(\beta^2+\ep_1) + \mu_1^2(\vr_1-\vr_+).
\end{equation}
Then, a lengthy yet straightforward computation yields the following equations as a consequence of \eqref{eq:E_left2}, \eqref{eq:E_right2}
\begin{align}
&(\beta+v_-)\vr_1\ep_1(\vr_--\vr_1) - \vr_-\vr_1(\beta - v_-)(\ep_1+\ep_2) \nonumber \\
& \qquad = (\beta - v_-)\left[(\vr_--\vr_1)(p_1+p_-) + 2c_v\vr_-p_1 - 2c_v\vr_1p_- \right]; \label{eq:B} \\
-&(v_+ + \beta)\vr_1\ep_1(\vr_1-\vr_+) + \vr_+\vr_1(v_+ - \beta)(\ep_1+\ep_2) \nonumber \\
& \qquad = (v_+-\beta)\left[(\vr_1-\vr_+)(p_1+p_+) + 2c_v\vr_1p_+ - 2c_v\vr_+p_1 \right]. \label{eq:C}
\end{align}

At this point we assume that $v_+ - \beta$ and $v_- - \beta$ are nonzero and refer to Lemma \ref{l:61} for proof. We continue by expressing $\ep_1 + \ep_2$ from \eqref{eq:C} and plugging this to \eqref{eq:B}. We have
\begin{equation}\label{eq:ep1ep2}
\ep_1 + \ep_2 = \frac{1}{\vr_1\vr_+}\left[(\vr_1-\vr_+)(p_1+p_+) + 2c_v\vr_1p_+ - 2c_v\vr_+p_1 \right] + \frac{v_++\beta}{v_+-\beta}\ep_1\frac{\vr_1-\vr_+}{\vr_+}
\end{equation}
and consequently \eqref{eq:B} becomes
\begin{align}
&\left(\frac{\beta+v_-}{\beta-v_-}\vr_1\vr_+(\vr_--\vr_1) - \frac{v_++\beta}{v_+-\beta}\vr_1\vr_-(\vr_1-\vr_+)\right)\ep_1 \nonumber \\
= &\vr_+\left[(\vr_--\vr_1)(p_1+p_-) + 2c_v(\vr_-p_1 - \vr_1p_-) \right] + \vr_-\left[(\vr_1-\vr_+)(p_1+p_+) + 2c_v(\vr_1p_+ - \vr_+p_1) \right]. \label{eq:B2}
\end{align}
Expressing the right hand side of \eqref{eq:B2} we end up with 
\begin{align}
&\left(\frac{\beta+v_-}{\beta-v_-}\vr_1\vr_+(\vr_--\vr_1) - \frac{v_++\beta}{v_+-\beta}\vr_1\vr_-(\vr_1-\vr_+)\right)\ep_1 \nonumber \\
= &\vr_1p_1R + \vr_+\vr_-(p_--p_+) + (2c_v+1)\vr_1(\vr_-p_+-\vr_+p_-). \label{eq:B3}
\end{align}
Once again it is useful to introduce further notation to simplify resulting expressions. We define
\begin{align}
X &= \vr_+\vr_-(p_--p_+) + (2c_v+1)\vr_1(\vr_-p_+-\vr_+p_-) \label{eq:X} \\
Y &= \frac{\beta+v_-}{\beta-v_-}\vr_+(\vr_--\vr_1) - \frac{v_++\beta}{v_+-\beta}\vr_-(\vr_1-\vr_+) \label{eq:Y} \\
Z &= p_+ + \vr_+v_+^2 - \vr_1\beta^2 + \mu_1^2(\vr_1-\vr_+). \label{eq:Z}
\end{align}
This way, assuming $Y \neq 0$ which will be justified in Lemma \ref{l:62}, we rewrite \eqref{eq:B3} as 
\begin{equation}
\ep_1 = \frac{\vr_1p_1R + X}{\vr_1 Y}
\end{equation}
and plugging this into \eqref{eq:A} and assuming $Y + \vr_1R \neq 0$ which will be justified in Lemma \ref{l:62}, too, we get
\begin{equation}
p_1 = \frac{YZ - X}{Y+\vr_1R}, \label{eq:p1}
\end{equation}
this in turn yields
\begin{equation} \label{eq:ep1}
\ep_1 = \frac{\vr_1RZ + X}{\vr_1(Y+\vr_1R)}
\end{equation}
and finally $\ep_2$ is expressed using \eqref{eq:ep1ep2}
\begin{equation} \label{eq:ep2}
\ep_2 = \frac{1}{\vr_1\vr_+}\left[(\vr_1-\vr_+)(p_1+p_+) + 2c_v(\vr_1p_+ - \vr_+p_1) \right] + \left(\frac{v_++\beta}{v_+-\beta}\frac{\vr_1-\vr_+}{\vr_+}-1\right)\ep_1.
\end{equation}

\subsubsection{Positivity of $p_1$, $\ep_1$ and $\ep_2$}\label{ss:ss}

Before we continue further, we remind the following useful expressions which can be derived from \eqref{eq:cont_left2} and \eqref{eq:cont_right2}
\begin{align}
\beta - \mu_0 &= \frac{\vr_-}{\vr_1}\left(v_- -\mu_0\right) \label{eq:ep2 1}\\
v_- - \beta &= \frac{\vr_1-\vr_-}{\vr_1}\left(v_--\mu_0\right) \label{eq:ep2 2}\\
\mu_1 - \beta &= \frac{\vr_+}{\vr_1}\left(\mu_1 - v_+\right) \label{eq:ep2 3}\\
\beta - v_+ &= \frac{\vr_1-\vr_+}{\vr_1}\left(\mu_1-v_+\right). \label{eq:ep2 4}
\end{align}
In particular we observe that the signs of $v_- - \beta$ and $\beta - v_+$ are the same as $v_- - \mu_0$ and $\mu_1 - v_+$ respectively. 
\begin{lemma}\label{l:61}
	For $u = v_- - v_+$ sufficiently large it holds 
	\begin{equation}
	v_- - \mu_0 > 0 \qquad \& \qquad \mu_1 - v_+ > 0.\label{eq:642}
	\end{equation}
\end{lemma}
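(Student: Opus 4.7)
My plan is to reduce both inequalities to polynomial inequalities in $u^2$ with positive leading coefficient, by substituting the explicit formulas \eqref{eq:num}, \eqref{eq:nup} for $\mu_0$ and $\mu_1$. The first step is to compute, using $A=\vr_-v_- - \vr_+v_+$ and $R=\vr_- - \vr_+$, the identities
\begin{equation*}
v_- - \frac{A}{R} = -\frac{\vr_+ u}{R}, \qquad \frac{A}{R} - v_+ = \frac{\vr_- u}{R},
\end{equation*}
which immediately rewrite the quantities of interest as
\begin{align*}
v_- - \mu_0 &= \frac{1}{R}\Bigl(-\vr_+ u + \sqrt{B\,\tfrac{\vr_1-\vr_+}{\vr_1-\vr_-}}\Bigr),\\
\mu_1 - v_+ &= \frac{1}{R}\Bigl(\vr_- u - \sqrt{B\,\tfrac{\vr_1-\vr_-}{\vr_1-\vr_+}}\Bigr).
\end{align*}

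Since $R<0$ and $u>0$ (indeed, we may take $u$ as large and positive as we like by assumption), the claim $v_- > \mu_0$ is equivalent, after squaring, to $\vr_+^2 u^2 > B(\vr_1-\vr_+)/(\vr_1-\vr_-)$, and the claim $\mu_1 > v_+$ to $\vr_-^2 u^2 < B(\vr_1-\vr_-)/(\vr_1-\vr_+)$. Substituting $B = \vr_-\vr_+ u^2 - R(p_+-p_-)$ and clearing the positive denominators $\vr_1-\vr_\pm$, both inequalities become linear in $u^2$. The coefficient of $u^2$ in each is, up to a positive factor, controlled by the single algebraic identity
\begin{equation*}
\vr_+(\vr_1-\vr_-) - \vr_-(\vr_1-\vr_+) = -\vr_1 R,
\end{equation*}
which is strictly positive because $R<0$ and $\vr_1>0$. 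After carefully tracking signs through division by $R$, both inequalities end up in the form $u^2 \cdot (\text{positive constant}) > (\text{constant depending only on initial data and }\vr_1)$, and hence are satisfied whenever $u$ is large enough.

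The main obstacle is purely sign bookkeeping: one must verify that squaring is permissible (which uses $u>0$ together with positivity of the radicands, the latter guaranteed by $B>0$ and the ordering $\vr_1>\vr_+>\vr_-$) and that the case assumption $R<0$ flips the inequalities in the intended direction after each division by $R$. The identity $\vr_+(\vr_1-\vr_-)-\vr_-(\vr_1-\vr_+)=-\vr_1 R$ is the single place where $R<0$ is used decisively, and once it is isolated the large-$u$ behaviour is transparent and the conclusion is immediate.
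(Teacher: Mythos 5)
Your proposal is correct, and it takes a genuinely different route from the paper's. After the common first step of rewriting $v_- - \mu_0$ and $\mu_1 - v_+$ via the identities $v_- - A/R = -\vr_+ u/R$ and $A/R - v_+ = \vr_- u/R$, the paper keeps the square root and splits into two cases: when $p_+ \geq p_-$ it bounds the radicand above using $\frac{\vr_1-\vr_+}{\vr_1-\vr_-}<1$ and $\sqrt{\vr_-\vr_+}<\vr_+$ to conclude $v_- - \mu_0 > \frac{1}{|R|}(\vr_+ - \sqrt{\vr_-\vr_+})u > 0$ for all $u>0$, while when $p_+ < p_-$ it bounds the radicand crudely by $\vr_-\vr_+u^2 + (\vr_+-\vr_-)(p_--p_+)$ and takes $u$ large. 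You instead square both sides (justified since both are positive), clear the positive denominators $\vr_1-\vr_\pm$, and reduce each inequality to a statement linear in $u^2$ whose leading coefficient is controlled by the single identity $\vr_+(\vr_1-\vr_-)-\vr_-(\vr_1-\vr_+)=-\vr_1 R>0$. Your route avoids the case distinction on the sign of $p_+-p_-$ entirely and makes the large-$u$ behaviour transparent in one stroke; the paper's route is a little more ad hoc but, as a by-product, proves that when $p_+\geq p_-$ the inequalities actually hold for every $u>0$, not merely for $u$ large. Since the lemma only asserts the large-$u$ statement, both arguments are adequate.
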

\begin{proof}
	We have
	\begin{align}
	v_- - \mu_0 &= -\frac{\vr_+ u}{R} + \frac{1}{R}\sqrt{\big(\vr_-\vr_+ u^2 - (\vr_+-\vr_-)(p_+-p_-)\big)\frac{\vr_1-\vr_+}{\vr_1-\vr_-}} \nonumber\\
	&= \frac{\vr_+ u}{\abs{R}} - \frac{1}{\abs{R}}\sqrt{\big(\vr_-\vr_+ u^2 - (\vr_+-\vr_-)(p_+-p_-)\big)\frac{\vr_1-\vr_+}{\vr_1-\vr_-}}.
	\end{align}
	Now we distinguish two cases. First, if $p_+ \geq p_-$, then we easily observe (recall $\vr_+ > \vr_-$) that
	\begin{equation}
	v_- - \mu_0 > \frac{1}{\abs{R}}(\vr_+ u - \sqrt{\vr_-\vr_+} u) > 0.
	\end{equation}
	However if $p_+ < p_-$ we get
	\begin{equation}
	v_- - \mu_0 > \frac{1}{\abs{R}}(\vr_+u - \sqrt{\vr_-\vr_+u^2 + (\vr_+-\vr_-)(p_--p_+) })
	\end{equation}
	and the expression on the right hand side can be made positive assuming $u$ is large enough. Similarly we proceed with quantity $\mu_+ - v_+$.
\end{proof}
We recall that Lemma \ref{l:61} justifies the assumptions $v_+ - \beta$ and $v_- - \beta$ nonzero we made in the process of deriving \eqref{eq:p1} and \eqref{eq:ep1}.

We continue by analyzing whether one can find $\vr_1$ such that $p_1$, $\ep_1$ and $\ep_2$ are all positive. This part starts with rewriting the expression \eqref{eq:Y} for $Y$. We have
\begin{align}
Y &= \vr_+(\vr_1-\vr_-)\frac{v_-+\beta}{v_--\beta} + \vr_-(\vr_1-\vr_+)\frac{\beta+v_+}{\beta-v_+} \nonumber \\
&= \vr_1\vr_+ \frac{v_-+\beta}{v_--\mu_0} + \vr_1\vr_- \frac{\beta+v_+}{\mu_1-v_+} \nonumber \\
&= \vr_1\vr_+ \frac{v_-+\mu_0}{v_--\mu_0} + \vr_1\vr_- \frac{\mu_1 + v_+}{\mu_1-v_+}, \label{eq:646}
\end{align}
where the last equality holds although obviously $\beta \notin \{\mu_0,\mu_1\}$. In what follows our strategy is to express certain quantities as functions of $\vr_1$ and $u$ while treating $\vr_\pm,p_\pm$ and $v_-$ as data. This way we obtain
\begin{equation}\label{eq:YY}
Y = \vr_1R\left[1+\frac{2}{\sqrt{\frac{B}{\vr_-^2 u^2}\frac{\vr_1-\vr_-}{\vr_1-\vr_+}}-1} + 2v_-\left(\frac{1}{\sqrt{\frac{B(\vr_1-\vr_+)}{\vr_+^2(\vr_1-\vr_-)}}-u} - \frac{1}{\sqrt{\frac{B(\vr_1-\vr_-)}{\vr_-^2(\vr_1-\vr_+)}}-u}\right)\right]. 
\end{equation}

\begin{lemma}\label{l:62}
	For $u = v_- - v_+$ large enough it holds $Y < 0$ and $Y + \vr_1R < 0$ for all $\vr_1 > \vr_+$.
\end{lemma}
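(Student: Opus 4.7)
The plan is to work directly from the representation \eqref{eq:YY}: write $Y=\vr_1 R\cdot K(\vr_1,u)$, where $K$ denotes the bracketed quantity. Since $R=\vr_--\vr_+<0$ in this case and $\vr_1>\vr_+>0$, one has $\vr_1 R<0$ for every $\vr_1>\vr_+$, so $Y<0$ is equivalent to $K>0$, and $Y+\vr_1 R=\vr_1 R(K+1)<0$ is equivalent to $K+1>0$. Thus it suffices to show that $K(\vr_1,u)\geq 1/2$ uniformly for $\vr_1>\vr_+$, provided $u$ is large enough.

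To make the uniformity visible, I would pass to the natural variable $q:=(\vr_1-\vr_-)/(\vr_1-\vr_+)$, which satisfies $q\geq 1$ on $(\vr_+,\infty)$ (because $\vr_-<\vr_+$), and set $b(u):=B/u^2=\vr_-\vr_+-(\vr_+-\vr_-)(p_+-p_-)/u^2$, so that $b(u)\to\vr_-\vr_+$ as $u\to\infty$. Factoring a $u$ out of the two denominators in the $v_-$-correction, the bracket \eqref{eq:YY} becomes
\begin{equation*}
K = 1 + \frac{2}{\sqrt{b(u)q/\vr_-^2}-1} + \frac{2v_-}{u}\left(\frac{1}{\sqrt{b(u)/q}/\vr_+-1} - \frac{1}{\sqrt{b(u)q}/\vr_--1}\right).
\end{equation*}
The first summand after the leading $1$ is nonnegative, because $\vr_+/\vr_->1$ and $q\geq 1$ force $\sqrt{b(u)q/\vr_-^2}\geq\sqrt{\vr_+/\vr_-}-o(1)>1$ for large $u$; so the first two summands contribute at least $1$, uniformly in $\vr_1>\vr_+$.

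Next I would control the $v_-$-correction by showing that both inner denominators stay uniformly bounded away from zero. Using $q\geq 1$ gives $\sqrt{b(u)/q}/\vr_+\leq\sqrt{\vr_-/\vr_+}+o(1)<1$ and $\sqrt{b(u)q}/\vr_-\geq\sqrt{\vr_+/\vr_-}-o(1)>1$, so for all sufficiently large $u$ both bracketed fractions have moduli bounded by constants depending only on $\vr_\pm$. Consequently the whole $v_-$-correction is bounded in modulus by $C|v_-|/u$ with $C=C(\vr_\pm)$, uniformly in $\vr_1>\vr_+$. Choosing $u$ large enough that $C|v_-|/u\leq 1/2$ yields $K\geq 1/2$ on all of $(\vr_+,\infty)$, which gives both $Y<0$ and $Y+\vr_1 R<0$.

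The main (and essentially only) obstacle is the uniformity in $\vr_1$: the domain $(\vr_+,\infty)$ is not compact, and one must verify that no cancellation or blow-up is hidden at the endpoints. The change of variable to $q\in[1,\infty)$ resolves this cleanly, since the relevant square roots in each summand are monotone functions of $q$ whose asymptotic values as $u\to\infty$ are dictated by the strict inequality $\vr_-<\vr_+$; this is what produces the needed uniform gap above or below the number $1$ in the various denominators, and hence the uniform lower bound for $K$.
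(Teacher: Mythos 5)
Your proof is correct and follows essentially the same route as the paper's: start from the representation \eqref{eq:YY}, factor out $\vr_1 R<0$, then show that the second summand in the bracket is strictly positive (because $\sqrt{B(\vr_1-\vr_-)/(\vr_-^2u^2(\vr_1-\vr_+))}$ exceeds $1$ for large $u$) and that the $v_-$-correction is $O(1/u)$. The genuine refinement in your version is the substitution $q=(\vr_1-\vr_-)/(\vr_1-\vr_+)\in(1,\infty)$ together with $b(u)=B/u^2$. Since $q\geq 1$ and $b(u)\to\vr_-\vr_+$, the monotonicity in $q$ gives uniform bounds: $\sqrt{b(u)q}/\vr_-$ stays uniformly above $1$, $\sqrt{b(u)/q}/\vr_+$ stays uniformly below $1$, and both inner denominators in the $v_-$-correction are uniformly bounded away from zero for all $\vr_1>\vr_+$. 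This produces a threshold on $u$ independent of $\vr_1$, which is what the lemma literally asserts. The paper's proof argues via the limit $u\to\infty$ with $\vr_1$ fixed, which on its face gives only a $\vr_1$-dependent threshold; the uniformity is implicitly present but not spelled out. Your treatment makes the quantifier order explicit, which is the tidier justification; at the same time, the downstream use in Section \ref{sss:AI} (fix $\vr_1$, then choose $u>\overline{u}(\vr_1)$ and take an infimum over $\vr_1$) only needs the pointwise statement, so both readings suffice for the theorem. One small point: since $\vr_1 R<0$ alone already gives $Y+\vr_1R<Y<0$ once $Y<0$, your bound $K\geq 1/2$ (and hence $K+1\geq 3/2$) is more than what is needed for the second inequality, but it costs nothing and cleanly packages both conclusions.
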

\begin{proof}
	To prove Lemma \ref{l:62} we examine the limit of $Y$ as $u \rightarrow \infty$. First we observe
	\begin{equation}\label{eq:YY2}
	\sqrt{\frac{B}{\vr_-^2 u^2}\frac{\vr_1-\vr_-}{\vr_1-\vr_+}} \rightarrow \sqrt{\frac{\vr_+(\vr_1-\vr_-)}{\vr_-(\vr_1-\vr_+)}} > 1
	\end{equation}
	and therefore the second term in the square brackets of \eqref{eq:YY} is positive. We continue by proving that the last term converges to zero. We write
	\begin{align}
	&2v_-\left(\frac{1}{\sqrt{\frac{B(\vr_1-\vr_+)}{\vr_+^2(\vr_1-\vr_-)}}-u} - \frac{1}{\sqrt{\frac{B(\vr_1-\vr_-)}{\vr_-^2(\vr_1-\vr_+)}}-u}\right) \nonumber \\
	&\qquad  = \frac{2v_-}{u}\left(\frac{1}{\sqrt{\frac{B(\vr_1-\vr_+)}{u^2\vr_+^2(\vr_1-\vr_-)}}-1} - \frac{1}{\sqrt{\frac{B(\vr_1-\vr_-)}{\vr_-^2u^2(\vr_1-\vr_+)}}-1}\right) \nonumber \\
	&\qquad  = \frac{2v_-}{u}\left(\frac{\sqrt{\frac{B(\vr_1-\vr_-)}{\vr_-^2u^2(\vr_1-\vr_+)}} - \sqrt{\frac{B(\vr_1-\vr_+)}{u^2\vr_+^2(\vr_1-\vr_-)}}}{\left(\sqrt{\frac{B(\vr_1-\vr_+)}{u^2\vr_+^2(\vr_1-\vr_-)}}-1\right)\left(\sqrt{\frac{B(\vr_1-\vr_-)}{\vr_-^2u^2(\vr_1-\vr_+)}}-1\right)}\right)\label{eq:YY3}
	\end{align}
	and observe that both terms in the numerator of the fraction converge to distinct quantities and the denominator has finite limit as $u \rightarrow \infty$. Therefore the whole fraction has a finite nonzero limit and the whole term therefore can be made arbitrarily small by choosing $u$ sufficiently large.
	
	Altogether we have proved that the expression in the square brackets in \eqref{eq:YY} is positive at least for $u$ sufficiently large and since $R < 0$ we conclude that $Y < 0$ for large $u$. Also we observe that $Y$ has a finite limit as $u \rightarrow \infty$. The same two properties obviously hold also for $Y + \vr_1R$.
\end{proof}

\begin{remark}
	At this point we emphasize the need to take $v_-$ as fixed and by taking $u$ large enough recover $v_+$. This procedure would not work the other way round, i.e. taking $v_+$ fixed. If we would consider $v_+$ fixed, the expression \eqref{eq:YY} would become
	\begin{equation}\label{eq:YY4}
	Y = \vr_1R\left[1+\frac{2}{\sqrt{\frac{B}{\vr_+^2 u^2}\frac{\vr_1-\vr_+}{\vr_1-\vr_-}}-1} + 2v_+\left(\frac{1}{\sqrt{\frac{B(\vr_1-\vr_+)}{\vr_+^2(\vr_1-\vr_-)}}-u} - \frac{1}{\sqrt{\frac{B(\vr_1-\vr_-)}{\vr_-^2(\vr_1-\vr_+)}}-u}\right)\right]. 
	\end{equation}
	It is not difficult to figure out that 
	\begin{equation}
	\lim_{u \rightarrow \infty} \left(1+\frac{2}{\sqrt{\frac{B}{\vr_+^2 u^2}\frac{\vr_1-\vr_+}{\vr_1-\vr_-}}-1}\right) < 0,
	\end{equation}
	which yields $Y$ positive and this would cause troubles later when studying the sign of $\ep_1$.
\end{remark}

Now let us turn our attention to expressions $X$ and $Z$. We recall here the definition \eqref{eq:X} of $X$ 
\begin{equation}
X = \vr_+\vr_-(p_--p_+) + (2c_v+1)\vr_1(\vr_-p_+-\vr_+p_-).
\end{equation}
We see that $X$ does not depend on $u$ and may be positive or negative depending on values of $\vr_\pm, p_\pm$ and $\vr_1$.

Concerning $Z$ we have from \eqref{eq:Z}
\begin{align}
Z &= p_+ + \vr_+v_+^2 - \vr_1\beta^2 + \mu_1^2(\vr_1-\vr_+) \nonumber \\
&= p_+ + \frac{(\vr_1-\vr_+)\vr_+}{\vr_1}(v_+-\mu_1)^2
\end{align}
and we see that $Z$ is always positive. Moreover we plug in the expression \eqref{eq:nup} to obtain
\begin{align}
Z &= p_+ + \frac{(\vr_1-\vr_+)\vr_+}{\vr_1}(v_+-\mu_1)^2 \nonumber \\
&= p_+ + \frac{(\vr_1-\vr_+)\vr_+}{\vr_1R^2}\left(\sqrt{B\frac{\vr_1-\vr_-}{\vr_1-\vr_+}} - \vr_- u\right)^2 \nonumber \\
&= p_+ + \frac{(\vr_1-\vr_+)\vr_+u^2}{\vr_1R^2}\left(\sqrt{\left(\vr_-\vr_+ +\frac{R(p_+-p_-)}{u^2}\right)\frac{\vr_1-\vr_-}{\vr_1-\vr_+}} - \vr_-\right)^2.\label{eq:blabla}
\end{align}
Observing that the quantity in parenthesis on the last line of \eqref{eq:blabla} has finite nonzero limit as $u \rightarrow \infty$ we conclude that $Z$ grows as $u^2$.

We are now ready to study the signs of $p_1$ and $\ep_1$. 
\begin{lemma}\label{l:63}
	For $u = v_- - v_+$ large enough it holds $p_1 > 0$ and $\ep_1 > 0$ for all $\vr_1 > \vr_+$.
\end{lemma}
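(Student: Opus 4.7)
\textbf{Proof plan for Lemma \ref{l:63}.}
My plan is to show that, with the denominators $Y+\vr_1R$ already known to be strictly negative by Lemma \ref{l:62}, the numerators $YZ-X$ and $\vr_1RZ+X$ of the expressions \eqref{eq:p1} and \eqref{eq:ep1} are also strictly negative for all $\vr_1>\vr_+$ provided $u$ is taken large enough. I first observe that a slight sharpening of the computation in Lemma \ref{l:62} yields not only that the bracket in \eqref{eq:YY} is positive but in fact exceeds $1$ uniformly in $\vr_1>\vr_+$, because $\sqrt{(B/(\vr_-^2u^2))\,(\vr_1-\vr_-)/(\vr_1-\vr_+)}\to \sqrt{\vr_+(\vr_1-\vr_-)/(\vr_-(\vr_1-\vr_+))}>1$ as $u\to\infty$ (using $\vr_+>\vr_-$ and $\vr_1>\vr_+$). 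Consequently $|Y|>\vr_1|R|$, and therefore a single estimate of the form $\vr_1|R|Z>|X|$ is enough: indeed it immediately gives $\vr_1RZ+X=-\vr_1|R|Z+X<0$ (so $\ep_1>0$), while combined with $|Y|Z\geq\vr_1|R|Z>|X|\geq -X$ it also yields $YZ=-|Y|Z<-|X|\leq X$, i.e. $YZ-X<0$ (so $p_1>0$).

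The core analytic step is therefore to establish a uniform lower bound on $Z$ that grows like $u^2$. Starting from $Z=p_++\tfrac{(\vr_1-\vr_+)\vr_+}{\vr_1}(v_+-\mu_1)^2$ and using \eqref{eq:nup}, a direct manipulation produces the compact identity
\begin{equation*}
(\vr_1-\vr_+)(v_+-\mu_1)^2 \;=\; \frac{1}{R^2}\Bigl(\sqrt{B(\vr_1-\vr_-)}-\vr_-u\sqrt{\vr_1-\vr_+}\,\Bigr)^{2},
\end{equation*}
which eliminates the singular factor $q=(\vr_1-\vr_-)/(\vr_1-\vr_+)$ present in the raw formulas. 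For $u$ large one has $B/u^2$ close to $\vr_-\vr_+$, so the square-root expression behaves like $u\bigl(\sqrt{\vr_+(\vr_1-\vr_-)}-\sqrt{\vr_-(\vr_1-\vr_+)}\bigr)\sqrt{\vr_-}$, and rationalization gives
\begin{equation*}
\sqrt{\vr_+(\vr_1-\vr_-)}-\sqrt{\vr_-(\vr_1-\vr_+)}\;=\;\frac{\vr_1|R|}{\sqrt{\vr_+(\vr_1-\vr_-)}+\sqrt{\vr_-(\vr_1-\vr_+)}}.
\end{equation*}
Applying the elementary bound $(a+b)^2\leq 2(a^2+b^2)$ to the denominator and collecting terms yields, for $u$ sufficiently large and uniformly in $\vr_1>\vr_+$, the lower bound $Z\geq p_++\tfrac{\vr_-\vr_+}{2(\vr_-+\vr_+)}\,u^{2}$.

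With this in hand, together with the explicit estimate $|X|\leq \vr_-\vr_+|p_--p_+|+(2c_v+1)\vr_1|\vr_-p_+-\vr_+p_-|$, the desired inequality $\vr_1|R|Z>|X|$ becomes an affine-in-$\vr_1$ condition whose leading coefficient is rendered positive and arbitrarily large by choosing $u$ big enough; a last comparison at $\vr_1=\vr_+$ finishes the proof. The main obstacle I anticipate is precisely the uniformity in $\vr_1\in(\vr_+,\infty)$: the endpoint $\vr_1\to\vr_+^{+}$ is delicate because $\mu_1\to\infty$ and many quantities become singular, while at $\vr_1\to\infty$ the term $X$ grows linearly and must be dominated by $\vr_1|R|Z$. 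The rewriting via $\sqrt{B(\vr_1-\vr_-)}-\vr_-u\sqrt{\vr_1-\vr_+}$ is what makes both endpoints tractable simultaneously, because after rationalization the resulting expression for $Z-p_+$ is continuous and strictly positive on the closed half-line $[\vr_+,\infty]$ (in the one-point compactification), hence bounded below.
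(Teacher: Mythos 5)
The paper's proof of this lemma is a short, direct asymptotic argument, and is considerably simpler than your plan. For a fixed $\vr_1 > \vr_+$, the quantity $X$ is independent of $u$, the quantity $Z$ grows like $u^2$ (from \eqref{eq:blabla}), and both $Y$ and $Y+\vr_1 R$ tend to finite, strictly negative limits as $u\to\infty$ (this is exactly what the proof of Lemma \ref{l:62} shows). Therefore, in $p_1 = \frac{YZ-X}{Y+\vr_1R}$ the term $\frac{YZ}{Y+\vr_1R}$ is positive and grows like $u^2$, while $\frac{X}{Y+\vr_1R}$ stays bounded; the same goes for $\ep_1 = \frac{\vr_1RZ+X}{\vr_1(Y+\vr_1R)}$ with $\frac{RZ}{Y+\vr_1R}$ the dominant (positive, $\sim u^2$) term. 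That is the entire proof.

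There is, however, a concrete gap in your proposal. You claim that the bracket in \eqref{eq:YY} exceeds $1$, i.e.\ $|Y|>\vr_1|R|$, \emph{uniformly in} $\vr_1>\vr_+$, and you justify it solely by noting that the second term $\frac{2}{\sqrt{\cdots}-1}$ in the bracket is positive. This ignores the third term $2v_-(\cdots)$, which is negative whenever $v_->0$ and is not negligible near the endpoint $\vr_1\to\vr_+^+$. Indeed, as $\vr_1\to\vr_+^+$ (with $u$ fixed) the second term $\frac{2}{\sqrt{\cdots}-1}$ tends to $0$, while the expression in \eqref{eq:YY3} tends to $-1$, so the third term tends to $-\frac{2v_-}{u}$; the bracket therefore tends to $1-\frac{2v_-}{u}$, which is strictly below $1$ for $v_->0$. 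Thus $|Y|\geq\vr_1|R|$ fails near $\vr_1=\vr_+$, and your reduction of both positivity statements to the single inequality $\vr_1|R|Z>|X|$ collapses: when $X<0$ the bound on $p_1$ genuinely requires $|Y|Z>|X|$, which does not follow from $\vr_1|R|Z>|X|$ without the bracket exceeding $1$. Pointwise in $\vr_1$ your claim is recoverable (the third term tends to $0$ as $u\to\infty$ for each fixed $\vr_1$), but that pointwise version is exactly the paper's simple argument, and the uniformity you insisted on is neither delivered by your reasoning nor actually needed: the final step of the proof of Theorem \ref{t:main} only requires a threshold $\overline{u}(\vr_1)$ depending on $\vr_1$.
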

\begin{proof}
	Since we have from \eqref{eq:p1}
	\begin{equation}
	p_1 = \frac{YZ - X}{Y+\vr_1R}, \label{eq:p12a}
	\end{equation}
	we easily see that for large $u$ the leading term is $\frac{YZ}{Y+\vr_1R}$ which grows like $u^2$, whereas $\frac{X}{Y+\vr_1R}$ has a finite limit as $u \rightarrow \infty$. Since both $Y$ and $Y+\vr_1R$ are negative at least for large $u$ and $Z > 0$, we conclude that $p_1 > 0$ for $u$ large enough.
	
	Similarly we have from \eqref{eq:ep1}
	\begin{equation} \label{eq:ep12a}
	\ep_1 = \frac{\vr_1RZ + X}{\vr_1(Y+\vr_1R)}
	\end{equation}
	and using similar arguments as above for $p_1$ we see that the term $\frac{RZ}{Y+\vr_1R}$ grows like $u^2$ and is positive whereas $\frac{X}{\vr_1(Y+\vr_1R)}$ has finite limit as $u \rightarrow \infty$ and therefore $\ep_1 > 0$ for $u$ large enough.
\end{proof}

Next we study the sign of $\ep_2$. We have
\begin{lemma}\label{l:64}
	For  
	\begin{equation}\label{eq:rho1_condA}
	\rho_1 > \frac{(2c_v+1)\vr_-+\vr_+}{2} + \frac{\sqrt{\vr_+^2 + (4c_v^2-1)\vr_+\vr_-}}{2}
	\end{equation}
	and $u = v_- - v_+$ large enough it holds $\ep_2 > 0$.
\end{lemma}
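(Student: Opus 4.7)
The plan is to substitute the explicit formulas \eqref{eq:p1} for $p_1$ and \eqref{eq:ep1} for $\ep_1$ into \eqref{eq:ep2} and read off the leading order behavior of $\ep_2$ as $u = v_--v_+ \to \infty$, with $v_-$, $\vr_\pm$, $p_\pm$ and $\vr_1$ all held fixed. The idea is that everything reduces to checking the sign of the coefficient of $u^2$ in $\ep_2$, and this sign is controlled by a quadratic in $\vr_1$ whose larger root matches the threshold in \eqref{eq:rho1_condA}.

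First I would collect the asymptotic ingredients already established. From \eqref{eq:blabla} the quantity $Z$ grows like $u^2$ with an explicit, strictly positive leading coefficient $Z^{(2)}$. By Lemma \ref{l:62}, $Y$ and $Y+\vr_1 R$ have finite, strictly negative limits. Via \eqref{eq:p1} and \eqref{eq:ep1} this shows that $p_1 = p_1^{(2)}u^2 + O(1)$ and $\ep_1 = \ep_1^{(2)} u^2 + O(1)$ where $p_1^{(2)} = Y_\infty Z^{(2)}/(Y_\infty+\vr_1 R)$ and $\ep_1^{(2)} = R\,Z^{(2)}/[\vr_1(Y_\infty+\vr_1R)]$ are both strictly positive. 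Finally, using \eqref{eq:beta2} together with $\mu_1 = O(u)$, one checks that $\beta$ is $O(u)$ with the same sign as $v_+$, so the ratio $\kappa := \lim_{u\to\infty}(v_++\beta)/(v_+-\beta)$ exists and is finite.

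Substituting all this into \eqref{eq:ep2} and keeping only the $u^2$ term, I would obtain
\[
\ep_2 = u^2\left[\frac{p_1^{(2)}\bigl(\vr_1-(2c_v+1)\vr_+\bigr)}{\vr_1\vr_+} + \left(\kappa\,\frac{\vr_1-\vr_+}{\vr_+}-1\right)\ep_1^{(2)}\right] + O(1).
\]
The remaining step is an algebraic simplification: plug in the explicit forms of $\kappa$, $p_1^{(2)}$, $\ep_1^{(2)}$ (all computable from the limits of $Y$, $\mu_1$ and \eqref{eq:blabla}) and show that the bracket, after clearing the common positive factor $Z^{(2)}/(Y_\infty+\vr_1R)$, is positive if and only if a certain quadratic in $\vr_1$ of the form $\vr_1^2 - [(2c_v+1)\vr_-+\vr_+]\vr_1 + q(\vr_\pm,c_v)$ is positive. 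The discriminant of this quadratic should come out to be $\vr_+^2 + (4c_v^2-1)\vr_+\vr_-$, so that its larger root is exactly the right-hand side of \eqref{eq:rho1_condA}. Once this matching is confirmed, the condition \eqref{eq:rho1_condA} makes the $u^2$-coefficient of $\ep_2$ strictly positive, and taking $u$ large enough yields $\ep_2>0$.

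The main obstacle is precisely this last algebraic step: tracking $\kappa$ and the leading coefficients $p_1^{(2)}$, $\ep_1^{(2)}$ through the cancellations and verifying that the resulting rational function of $\vr_1$ collapses to the specific quadratic whose larger root is the threshold in the statement. Everything else is an asymptotic bookkeeping exercise.
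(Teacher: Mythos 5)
Your proposal is correct and follows the paper's approach exactly: extract the $u^2$-leading coefficient of $\ep_2$ from the explicit formulas for $p_1$, $\ep_1$ and the limits of $Y$, $Z$, and reduce its positivity to a quadratic inequality in $\vr_1$ whose larger root is the threshold in \eqref{eq:rho1_condA}. The algebraic simplification you defer does work out as you predict (after clearing denominators the cubic in $\vr_1$ picks up a common factor of $\vr_1$, leaving a quadratic with discriminant $\vr_+^2+(4c_v^2-1)\vr_+\vr_-$), and the paper itself leaves that verification to the reader in the same spirit.
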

\begin{proof}
	We start from \eqref{eq:ep2}
	\begin{align}
	\ep_2 &= \frac{1}{\vr_1\vr_+}\left[(\vr_1-\vr_+)(p_1+p_+) + 2c_v(\vr_1p_+ - \vr_+p_1) \right] + \ep_1\left(\frac{v_++\beta}{v_+-\beta}\frac{\vr_1-\vr_+}{\vr_+}-1\right) \nonumber \\
	&= \frac{p_+((2c_v+1)\vr_1-\vr_+)}{\vr_1\vr_+} + \frac{p_1(\vr_1-(2c_v+1)\vr_+)}{\vr_1\vr_+} \nonumber \\
	&\qquad + \ep_1\left(-\frac{\vr_1}{\vr_+} + \frac{2\vr_1R}{\vr_+\left(\vr_- - \sqrt{\frac{B(\vr_1-\vr_-)}{u^2(\vr_1-\vr_+)}}\right)} + \frac{2\vr_1Rv_-}{\vr_+\left(\sqrt{B\frac{\vr_1-\vr_-}{\vr_1-\vr_+}} - \vr_- u\right)}\right).\label{eq:73}
	\end{align}
	
	Keeping in mind that both $p_1$ and $\ep_1$ grow as $u^2$ we identify the leading terms of $\ep_2$ as $u \rightarrow \infty$. We have 
	\begin{align}
	\ep_2 &= \frac{YZ}{Y+\vr_1R}\frac{\vr_1-(2c_v+1)\vr_+}{\vr_1\vr_+} + \frac{RZ}{Y+\vr_1R}\left(-\frac{\vr_1}{\vr_+} + \frac{2\vr_1R}{\vr_+\left(\vr_- - \sqrt{\frac{B(\vr_1-\vr_-)}{u^2(\vr_1-\vr_+)}}\right)}\right) + l.o.t. \nonumber \\
	&= \frac{Z}{\vr_+(Y+\vr_1R)}\left(\frac{Y(\vr_1-(2c_v+1)\vr_+)}{\vr_1} + R\left(-\vr_1 + \frac{2\vr_1R}{\vr_+\left(\vr_- - \sqrt{\frac{B(\vr_1-\vr_-)}{u^2(\vr_1-\vr_+)}}\right)}\right)\right) + l.o.t. \nonumber \\
	&=:\frac{Z}{\vr_+(Y+\vr_1R)} E + l.o.t., \label{eq:74}
	\end{align}
	where $l.o.t.$ stands for lower order terms with respect to $u^2$. We know that $\frac{Z}{\vr_+(Y+\vr_1R)}$ is negative so we continue by studying the remaining part $E$ of the leading term of $\ep_2$ and to make $\ep_2 > 0$ we want to show that $E < 0$.
	
	Recall that from \eqref{eq:YY}-\eqref{eq:YY3} we already know that
	\begin{equation}
	\lim_{u \rightarrow \infty} Y = \vr_1R\left(1+\frac{2}{\sqrt{\frac{\vr_+(\vr_1-\vr_-)}{\vr_-(\vr_1-\vr_+)}}-1}\right)
	\end{equation}
	and therefore a straightforward calculation yields
	\begin{equation}\label{eq:limE}
	\lim_{u \rightarrow \infty} E = R\left[-(2c_v+1)\vr_+ + \left(\frac{2\vr_+\vr_1}{\vr_-} - 2(2c_v+1)\vr_+\right)\frac{1}{\sqrt{\frac{\vr_+(\vr_1-\vr_-)}{\vr_-(\vr_1-\vr_+)}}-1}\right]
	\end{equation}
	Since we have $R < 0$, we want the term in square parenthesis of \eqref{eq:limE} to be positive. It is not difficult to show that this can be done by choosing $\vr_1$ large enough with respect to $\vr_\pm$, more specifically if 
	\begin{equation}
	\vr_1 > \frac{(2c_v+1)\vr_-+\vr_+}{2} + \frac{\sqrt{\vr_+^2 + (4c_v^2-1)\vr_+\vr_-}}{2}
	\end{equation}
	then $\lim_{u \rightarrow \infty} E < 0$.
	
\end{proof}

\subsubsection{Admissibility inequalities} \label{sss:AI}

Finally we have to check whether the admissibility inequalities \eqref{eq:adm_left2}--\eqref{eq:adm_right2} are satisfied. We start by plugging the expression \eqref{eq:beta1} into \eqref{eq:adm_left2} and we obtain
\begin{equation}
(v_- - \mu_0)(\vr_-L_1 - \vr_1L_-) \geq 0.
\end{equation}
We already know that $v_- - \mu_0 > 0$ at least for $u$ large enough, see Lemma \ref{l:61}. Therefore we need to ensure that 
\begin{equation}\label{eq:AA1}
\vr_-L_1 \geq \vr_1L_-.
\end{equation}
Using the definition of $L_i$ the inequality \eqref{eq:AA1} is equivalent to
\begin{equation}\label{eq:AD1}
\frac{p_1^{c_v}}{\vr_1^{c_v+1}} \geq \frac{p_-^{c_v}}{\vr_-^{c_v+1}}.
\end{equation}

We proceed similarly with \eqref{eq:adm_right2} where we plug in the expression \eqref{eq:beta2} for $\beta$ to get
\begin{equation}
(\mu_1 - v_+)(\vr_+L_1 - \vr_1L_+) \geq 0.
\end{equation}
Again by Lemma \ref{l:61} we know that $\mu_1 - v_+ > 0$ at least for $u$ large enough and thus we need
\begin{equation}\label{eq:AA2}
\vr_+L_1 \geq \vr_1L_+,
\end{equation}
which is equivalent to 
\begin{equation}\label{eq:AD2}
\frac{p_1^{c_v}}{\vr_1^{c_v+1}} \geq \frac{p_+^{c_v}}{\vr_+^{c_v+1}}.
\end{equation}

Combining \eqref{eq:AD1} and \eqref{eq:AD2} we end up with
\begin{equation}\label{eq:AD12}
p_1^{c_v} \geq \vr_1^{c_v+1}\max \bigg\{\frac{p_-^{c_v}}{\vr_-^{c_v+1}},\frac{p_+^{c_v}}{\vr_+^{c_v+1}}\bigg\}.
\end{equation}
However we already know that $p_1$ grows as $u^2$, so regardless of the choice of $\vr_1$ we can always ensure that \eqref{eq:AD12} is satisfied by choosing $u$ large enough.

We have showed that for any $\vr_1$ satisfying \eqref{eq:rho1_condA} there exists some $\overline{u}(\vr_1)$ such that if $u > \overline{u}(\vr_1)$ then we can construct a $1$-fan subsolution. Then the point $(i)$ of Theorem \ref{t:main} in the case $R < 0$ is proved by defining
\begin{equation}
U = U(\vr_\pm,p_\pm,v_-) := v_- - \inf_{\rho_1}\overline{u}(\rho_1),
\end{equation}
where the $\inf$ is taken among $\rho_1$ such that \eqref{eq:rho1_condA} holds.

\begin{remark}\label{r:mu}
	There exists also another solution to equations \eqref{eq:cont_left2}, \eqref{eq:mom_2_left2}, \eqref{eq:cont_right2}, \eqref{eq:mom_2_right2}, namely 
	\begin{align}
	\mu_0 = \frac{A}{R} + \frac{1}{R}\sqrt{B\frac{\vr_1-\vr_+}{\vr_1-\vr_-}} \label{eq:num5}\\
	\mu_1 = \frac{A}{R} + \frac{1}{R}\sqrt{B\frac{\vr_1-\vr_-}{\vr_1-\vr_+}} \label{eq:nup5}
	\end{align}
	and here it holds $\mu_0 < \mu_1$ for $\vr_1 < \vr_- < \vr_+$. However this solution is not convenient to work with, since it violates Lemma \ref{l:61}. More precisely, one of the inequalities \eqref{eq:642} holds with the opposite sign. This causes troubles in the analysis of the admissibility inequalities \eqref{eq:adm_left2}--\eqref{eq:adm_right2}, because these then yield one upper bound and one lower bound for $p_1$ instead of two lower bounds. Then naturally the argument with taking $u$ sufficiently large fails.
\end{remark}

\subsection{The case $R > 0$}

The proof in the case $R > 0$ follows the same steps. The expressions \eqref{eq:num}--\eqref{eq:nup} for $\mu_0$, $\mu_1$ stay the same and again we see that $\mu_0 < \mu_1$ if $\vr_1 > \vr_- > \vr_+$. Since we treat $v_+$ as fixed in this case, instead of \eqref{eq:YY} we get
\begin{equation}\label{eq:YYY}
Y = \vr_1R\left[1+\frac{2}{\sqrt{\frac{B}{\vr_+^2 u^2}\frac{\vr_1-\vr_+}{\vr_1-\vr_-}}-1} + 2v_+\left(\frac{1}{\sqrt{\frac{B(\vr_1-\vr_+)}{\vr_+^2(\vr_1-\vr_-)}}-u} - \frac{1}{\sqrt{\frac{B(\vr_1-\vr_-)}{\vr_-^2(\vr_1-\vr_+)}}-u}\right)\right]. 
\end{equation}
Similarly as in the case $R < 0$ we show that the term in the square brackets is positive at least for $u$ sufficiently large, so in this case we conclude that $Y > 0$ and obviously also $Y + \vr_1R > 0$. Since nothing changes in terms $X$ and $Z$, this yields that $p_1$ and $\ep_1$ are both positive at least for $u$ sufficiently large.

The analysis of $\ep_2$ changes again with respect to the fact that $v_+$ is now fixed instead of $v_-$. Instead of \eqref{eq:73} we obtain
\begin{equation}
\ep_2 = \frac{p_+((2c_v+1)\vr_1-\vr_+)}{\vr_1\vr_+} + \frac{p_1(\vr_1-(2c_v+1)\vr_+)}{\vr_1\vr_+} + \ep_1\left(-\frac{\vr_1}{\vr_+} + \frac{2\vr_1Rv_+}{\vr_+\left(\sqrt{B\frac{\vr_1-\vr_-}{\vr_1-\vr_+}} - \vr_- u\right)}\right)
\end{equation}
and hence
\begin{equation}
\ep_2 = \frac{Z}{\vr_+(Y+\vr_1R)}\left(\frac{Y(\vr_1-(2c_v+1)\vr_+)}{\vr_1} -\vr_1R\right) + l.o.t. =: \frac{Z}{\vr_+(Y+\vr_1R)} \tilde{E} + l.o.t.. \label{eq:742}
\end{equation}
Since the limit of $Y$ as $u \rightarrow \infty$ changes to
\begin{equation}
\lim_{u \rightarrow \infty} Y = \vr_1R\left(1+\frac{2}{\sqrt{\frac{\vr_-(\vr_1-\vr_+)}{\vr_+(\vr_1-\vr_-)}}-1}\right),
\end{equation}
a direct calculation yields 
\begin{equation}\label{eq:limEtilda}
\lim_{u \rightarrow \infty} \tilde{E} = R\left[-(2c_v+1)\vr_+ + \frac{2\vr_1 - 2(2c_v+1)\vr_+}{\sqrt{\frac{\vr_-(\vr_1-\vr_+)}{\vr_+(\vr_1-\vr_-)}}-1}\right].
\end{equation}
It is not difficult to obtain that $\lim_{u \rightarrow \infty} \tilde{E} > 0$ if 
\begin{equation}\label{eq:rho1_cond2}
\vr_1 > \frac{\vr_-+(2c_v+1)\vr_+}{2} + \frac{\sqrt{\vr_-^2 + (4c_v^2-1)\vr_+\vr_-}}{2}
\end{equation}
and thus choosing such $\vr_1$ we always find $u$ large enough such that $\ep_2 > 0$. Since nothing changes regarding the admissibility inequalities, we conclude the proof of point $(ii)$ of Theorem \ref{t:main} in the case $R > 0$ by the same arguments as in the case of point $(i)$ in the case $R < 0$.

\subsection{The case $R = 0$}

\subsubsection{Solution to algebraic equations}

This case has to be treated separately since even the expression for the speeds of interfaces $\mu_0,\mu_1$ \eqref{eq:num}-\eqref{eq:nup} are considerably different. We have
\begin{align}
\mu_0 &= \frac{1}{2}\left( -\frac{\vr_+ u}{\vr_1-\vr_+} + \frac{p_--p_+}{\vr_+ u} + v_-+v_+\right) \label{eq:num2}\\
\mu_1 &= \frac{1}{2}\left(\frac{\vr_+ u}{\vr_1-\vr_+} + \frac{p_--p_+}{\vr_+ u} + v_-+v_+\right) \label{eq:nup2}
\end{align}
and we immediately see that since we examine the case where $u > 0$ we are forced to assume $\vr_1 > \vr_+ = \vr_-$ in order to ensure $\mu_0 < \mu_1$. 

The relations \eqref{eq:beta1}-\eqref{eq:B2} stay the same in the case $R = 0$, however the expressions for quantities $X$ and $Y$ simplify a little bit to 
\begin{align}
X &= \vr_+^2(p_--p_+) + (2c_v+1)\vr_1\vr_+(p_+-p_-) = \vr_+(p_+-p_-)\left((2c_v+1)\vr_1 - \vr_+\right) \label{eq:X2} \\
Y &= \vr_+(\vr_1-\vr_+)\left(\frac{v_- + \beta}{v_- - \beta} + \frac{\beta + v_+}{\beta - v_+}\right) \label{eq:Y2} 
\end{align}
and consequently we have
\begin{equation} \label{eq:ep12b}
\ep_1 = \frac{X}{\vr_1 Y}
\end{equation}
and 
\begin{equation}
p_1 = Z - \frac{X}{Y}, \label{eq:p12b}
\end{equation}
with $Z$ defined as in \eqref{eq:Z}. Finally, $\ep_2$ is given by \eqref{eq:ep2}.

\subsubsection{Positivity of $p_1, \ep_1$ and $\ep_2$}

We claim that Lemma \ref{l:61} holds also in the case $R = 0$. Indeed, it is even easier here to see that
\begin{align}
v_- - \mu_0 &= \frac{1}{2}\left(\frac{\vr_1 u}{\vr_1 - \vr_+} - \frac{p_--p_+}{\vr_+ u}\right) \\
\mu_1 - v_+ &= \frac{1}{2}\left(\frac{\vr_1 u}{\vr_1 - \vr_+} + \frac{p_--p_+}{\vr_+ u}\right)
\end{align}
and both expressions on the right hand sides are positive at least for $u$ large enough.

Next, we observe that the sign of $X$ depends on the sign of $p_+ - p_-$, namely 
\begin{equation}
\text{sign}\, X = \text{sign}\, (p_+ - p_-)
\end{equation}
and in particular this shows that this presented construction simply does not work in the case $\vr_- = \vr_+$ and $p_- = p_+$, because in this case we end up with $\ep_1 = 0$. Also, knowing the sign of $X$ and having in mind the expression \eqref{eq:ep12b} for $\ep_1$ we see that in order to ensure $\ep_1 > 0$ we need to have
\begin{equation}
\text{sign}\, Y = \text{sign}\, X = \text{sign}\, (p_+ - p_-).
\end{equation}

Therefore we study now the sign of $Y$ in a similar manner as in the proof of Lemma \ref{l:62}. We start this part by claiming that the expression \eqref{eq:646} still holds and can be written as 
\begin{equation}\label{eq:6462}
Y = \vr_1\vr_+\left(\frac{v_-+\mu_0}{v_--\mu_0} + \frac{\mu_1 + v_+}{\mu_1-v_+}\right).
\end{equation}
Using \eqref{eq:num2}, \eqref{eq:nup2}, keeping $v_-$ fixed and expressing $v_+ = v_- - u$ we end up after a simple calculation with
\begin{equation}\label{eq:688}
Y = 4\vr_1\vr_+\left(\frac{2v_-}{\frac{\vr_1 u}{\vr_1-\vr_+} - \frac{(\vr_1-\vr_+)(p_--p_+)^2}{\vr_1\vr_+^2 u^3}} - \frac{1}{\frac{\vr_1}{\vr_1-\vr_+} + \frac{p_--p_+}{\vr_+ u^2}}\right).
\end{equation}
We observe that in the limit $u \rightarrow \infty$ we get $Y \rightarrow -4\vr_+(\vr_1-\vr_+) < 0$. Therefore we conclude that $\ep_1 > 0$ for $u$ large enough if we keep $v_-$ fixed and if $p_- > p_+$.

Similarly if we keep $v_+$ fixed, express $v_- = v_+ + u$ we end up instead of \eqref{eq:688} with
\begin{equation}\label{eq:689}
Y = 4\vr_1\vr_+\left(\frac{2v_+}{\frac{\vr_1 u}{\vr_1-\vr_+} - \frac{(\vr_1-\vr_+)(p_--p_+)^2}{\vr_1\vr_+^2 u^3}} + \frac{1}{\frac{\vr_1}{\vr_1-\vr_+} - \frac{p_--p_+}{\vr_+ u^2}}\right)
\end{equation}
and in the limit $u \rightarrow \infty$ we get $Y \rightarrow 4\vr_+(\vr_1-\vr_+) > 0$. Therefore we conclude that $\ep_1 > 0$ for $u$ large enough if we keep $v_+$ fixed and if $p_- < p_+$.

Concerning the sign of $p_1$ we have similarly as in \eqref{eq:blabla}
\begin{equation}
Z = \frac{\vr_1\vr_+ u^2}{4(\vr_1-\vr_+)} + \frac{p_-+p_+}{2} + \frac{(\vr_1-\vr_+)(p_--p_+)^2}{4\vr_1\vr_+ u^2},
\end{equation}
thus we see that $Z > 0$ and $Z \sim u^2$ as $u \rightarrow \infty$. Therefore using \eqref{eq:p12b} we conclude $p_1 > 0$.

\begin{remark}\label{r:eqpress}
	From the considerations above it is also clear why we do not state anything about the case $\vr_- = \vr_+$ and $p_- = p_+$ in Theorem \ref{t:main}. In this case $X = 0$ and the only way how one could choose $\ep_1 > 0$ and satisfy \eqref{eq:B3} would be to have also $Y = 0$. However, even if $p_- = p_+$ we observe that $Y$ has nonzero finite limit as $u \rightarrow \infty$ and hence this construction fails in this case.
\end{remark}

We finish this section with the study of the sign of $\ep_2$. 
\begin{lemma}\label{l:642}
	For  
	\begin{equation}\label{eq:rho1_condB}
	\rho_1 > (2c_v+1)\vr_+
	\end{equation}
	and $u = v_- - v_+$ large enough it holds $\ep_2 > 0$.
\end{lemma}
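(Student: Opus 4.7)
The plan is to mimic the proof of Lemma \ref{l:64}, but exploit the considerable simplification afforded by $R = 0$: namely, since $Y$ has a finite nonzero limit as $u \to \infty$ (as established in the analysis preceding the lemma) and $X$ is independent of $u$, the quantity $\ep_1 = X/(\vr_1 Y)$ from \eqref{eq:ep12b} is bounded in $u$, unlike in the case $R < 0$ where $\ep_1$ grew like $u^2$.

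Starting from \eqref{eq:ep2}, I would rearrange in exactly the same way that led to the first line of \eqref{eq:73}, namely
\begin{equation*}
\ep_2 = \frac{p_+((2c_v+1)\vr_1-\vr_+)}{\vr_1\vr_+} + \frac{p_1\bigl(\vr_1-(2c_v+1)\vr_+\bigr)}{\vr_1\vr_+} + \ep_1\left(\frac{v_++\beta}{v_+-\beta}\cdot\frac{\vr_1-\vr_+}{\vr_+}-1\right).
\end{equation*}
The first term is a constant (independent of $u$). Using \eqref{eq:p12b} together with the computed asymptotic $Z \sim \frac{\vr_1\vr_+ u^2}{4(\vr_1-\vr_+)}$, we see that $p_1$ grows like $u^2$ with a strictly positive leading coefficient, so the second term behaves like $\frac{u^2(\vr_1-(2c_v+1)\vr_+)}{4(\vr_1-\vr_+)}$ up to a positive prefactor.

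Next I would verify that the third term is $O(1)$ as $u \to \infty$. Since $\ep_1$ is bounded, it suffices to show that $\frac{v_++\beta}{v_+-\beta}$ has a finite limit. Combining \eqref{eq:beta2} with \eqref{eq:nup2}, one computes $\beta = \tfrac{\vr_+}{\vr_1}\bigl(v_+ + \tfrac{u}{2}\bigr) + O(1)$ as $u\to\infty$. In subcase (i) ($v_-=V$ fixed, $v_+ = V-u$) and in subcase (ii) ($v_+=V$ fixed, $v_- = V+u$) the numerator and denominator of $\frac{v_++\beta}{v_+-\beta}$ are both linear in $u$ with nonzero leading coefficients, so the ratio tends to a finite limit (in fact to $\frac{2\vr_1+\vr_+}{2\vr_1-\vr_+}$ in case (i) and the corresponding quantity in case (ii)). Hence the third term is bounded.

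Therefore the leading-order behavior of $\ep_2$ as $u\to\infty$ is given by $\frac{p_1\bigl(\vr_1-(2c_v+1)\vr_+\bigr)}{\vr_1\vr_+}$, which is positive and unbounded as $u\to\infty$ precisely when \eqref{eq:rho1_condB} holds, that is $\vr_1 > (2c_v+1)\vr_+$. Hence for any such $\vr_1$, choosing $u$ sufficiently large makes this leading term dominate the bounded remainder, yielding $\ep_2 > 0$. The only step requiring care is the asymptotic computation of $\beta$ and of $\frac{v_++\beta}{v_+-\beta}$ in each of the two subcases; it is routine but must be carried out separately since the roles of $v_-$ and $v_+$ are swapped.
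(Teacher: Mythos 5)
Your proof follows essentially the same route as the paper: you decompose $\ep_2$ exactly as in \eqref{eq:ep2 11}, identify the term $p_1\frac{\vr_1-(2c_v+1)\vr_+}{\vr_1\vr_+}$ as the one that grows like $u^2$ with a strictly positive coefficient once \eqref{eq:rho1_condB} holds, observe that $\ep_1 = X/(\vr_1 Y)$ is bounded (because $X$ is $u$-independent and $Y$ has a finite nonzero limit), and show that $\frac{v_++\beta}{v_+-\beta}$ has a finite limit. The paper does the same thing but reaches the last step through the algebraic identity $\frac{v_++\beta}{v_+-\beta}\frac{\vr_1-\vr_+}{\vr_+}-1 = \frac{\vr_1}{\vr_+}\frac{v_++\mu_1}{v_+-\mu_1}$ and then substitutes \eqref{eq:nup2}, whereas you expand $\beta$ itself.

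Your expansion of $\beta$ contains a computational slip, though it does not damage the argument. From \eqref{eq:beta2} and \eqref{eq:nup2},
\begin{equation*}
\beta = \frac{\vr_+}{\vr_1}v_+ + \frac{\vr_1-\vr_+}{2\vr_1}\left(\frac{\vr_+ u}{\vr_1-\vr_+} + \frac{p_--p_+}{\vr_+ u} + v_-+v_+\right),
\end{equation*}
and the term $\frac{\vr_1-\vr_+}{2\vr_1}(v_-+v_+)$ is itself $O(u)$ (since $v_-+v_+ = 2V\mp u$ in the two subcases), so it cannot be discarded into the $O(1)$ remainder. Carrying it through, the terms of order $u$ combine to give $\beta = \frac{v_-+v_+}{2}+O(1/u) = v_+ + \tfrac{u}{2}+O(1/u)$ (without the prefactor $\vr_+/\vr_1$), and hence in subcase (i) the ratio $\frac{v_++\beta}{v_+-\beta}$ tends to $3$ rather than $\frac{2\vr_1+\vr_+}{2\vr_1-\vr_+}$, with a similar correction in subcase (ii). Since your argument only uses that the ratio has a finite limit, the wrong constant does not invalidate the conclusion; still, the computation should be fixed.
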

\begin{proof}
	Recalling \eqref{eq:ep2} we have
	\begin{align} 
	\ep_2 &= \frac{1}{\vr_1\vr_+}\left[(\vr_1-\vr_+)(p_1+p_+) + 2c_v(\vr_1p_+ - \vr_+p_1) \right] + \ep_1\left(\frac{v_++\beta}{v_+-\beta}\frac{\vr_1-\vr_+}{\vr_+}-1\right) \nonumber \\
	&= p_1 \frac{\vr_1 - (1+2c_v)\vr_+}{\vr_1\vr_+} + p_+\frac{(2c_v+1)\vr_1-\vr_+}{\vr_1\vr_+} + \ep_1\left(\frac{v_++\beta}{v_+-\beta}\frac{\vr_1-\vr_+}{\vr_+}-1\right). \label{eq:ep2 11}
	\end{align}
	We already know, that $p_1$ grows as $u^2$ as $u \rightarrow \infty$. The second term on the right hand side of \eqref{eq:ep2 11} does not depend on $u$ at all. Concerning the third term, we know that $\ep_1$ has a finite limit as $u \rightarrow \infty$ and now we are going to show that the same property holds also for the term $\left(\frac{v_++\beta}{v_+-\beta}\frac{\vr_1-\vr_+}{\vr_+}-1\right)$. Indeed, by a simple calculation using \eqref{eq:beta2} we have
	\begin{equation}
	\frac{v_++\beta}{v_+-\beta}\frac{\vr_1-\vr_+}{\vr_+}-1 = \frac{\vr_1}{\vr_+}\frac{v_+ + \mu_1}{v_+ - \mu_1}.
	\end{equation}
	Plugging in \eqref{eq:nup2} we obtain
	\begin{align}
	\frac{\vr_1}{\vr_+}\frac{v_+ + \mu_1}{v_+ - \mu_1} &= \frac{\vr_1}{\vr_+}\left(-\frac{3v_++v_-}{\frac{\vr_1 u}{\vr_1-\vr_+} + \frac{p_--p_+}{\vr_+ u}} - 1\right) \nonumber \\
	&= \frac{\vr_1}{\vr_+}\left(-\frac{4v_++u}{\frac{\vr_1 u}{\vr_1-\vr_+} + \frac{p_--p_+}{\vr_+ u}} - 1\right) \label{eq:693} \\
	&= \frac{\vr_1}{\vr_+}\left(-\frac{4v_--3u}{\frac{\vr_1 u}{\vr_1-\vr_+} + \frac{p_--p_+}{\vr_+ u}} - 1\right) \label{eq:694}
	\end{align}
	If we assume $v_-$ is fixed we see from \eqref{eq:694} that the expression on the left hand side has a finite nonzero limit as $u \rightarrow \infty$, whereas if we assume $v_+$ is fixed, we conclude the same using \eqref{eq:693}.
	
	Altogether we have that the leading term in the expression \eqref{eq:ep2 11} as $u \rightarrow \infty$ is $p_1 \frac{\vr_1 - (1+2c_v)\vr_+}{\vr_1\vr_+}$. In particular if we assume $\vr_1 > (2c_v+1)\vr_+$, then $\ep_2 > 0$ provided $u$ is taken large enough.
\end{proof}

\subsubsection{Admissibility inequalities}
Nothing changes in the analysis of the admissibility inequalities \eqref{eq:adm_left2}--\eqref{eq:adm_right2} with respect to the case $R \neq 0$ and we can follow word by word section \ref{sss:AI}, in particular also here when we assume $R = 0$ we have 
\begin{equation}\label{eq:AD122}
p_1^{c_v} \geq \vr_1^{c_v+1}\max \bigg\{\frac{p_-^{c_v}}{\vr_-^{c_v+1}},\frac{p_+^{c_v}}{\vr_+^{c_v+1}}\bigg\}.
\end{equation}
and we know that $p_1$ grows as $u^2$, so we can choose $u$ large enough in order to \eqref{eq:AD122} to be satisfied.

\section*{Acknowledgements}
The research leading to these results was partially supported by the European Research Council under the European Union's Seventh Framework Programme (FP7/2007-2013)/ ERC Grant Agreement 320078. H. Al Baba, O. Kreml and V. M\'acha were supported by the GA\v CR (Czech Science Foundation) project GJ17-01694Y in the general framework of RVO: 67985840.

\end{document}